\newtheorem{lemma}{Lemma}[section]
\newtheorem{theorem}[lemma]{Theorem}
\newtheorem{remark}[lemma]{Remark}
\newtheorem{coro}[lemma]{Corollary}
\newtheorem{definition}[lemma]{Definition}
\newtheorem{example}[lemma]{Example}
\title[Two-sided Remotely Almost Periodic Solutions \ldots]
{Two-sided Remotely Almost Periodic Solutions of Ordinary Differential Equations in Banach Spaces}
\author{David Cheban}
\address[D. Cheban]{State University of Moldova\\
Faculty of Mathematics and Informatics\\
Laboratory of Fundamental and Applied Mathematics\\
A. Mateevich Street 60\\
MD--2009 Chi\c{s}in\u{a}u, Moldova} \email[D.
Cheban]{david.ceban@usm.md, davidcheban@yahoo.com}
\date{\today}
\subjclass{37B05, 37B55, 34C27, 34D05 } \keywords{Remotely Almost
Periodic Solution; Non-autonomous Dynamical Systems; Cocycles}
\begin{document}

\begin{abstract}
The aim of this paper is studying the two-sided remotely almost
periodic solutions of ordinary differential equations in Banach
spaces of the form $x'=A(t)x+f(t)+F(t,x)$ with two-sided remotely
almost periodic coefficients if the linear equation $x'=A(t)x$
satisfies the condition of exponential trichotomy and nonlinearity
$F$ is "small".
\end{abstract}

\maketitle

\section{Introduction}\label{Sec1}

The aim of this talk is studying the remotely almost periodic on
the real axis $\mathbb R$ solutions of differential equations.
This study continues the author's series of works devoted to the
study of remotely almost periodic motions of dynamical systems and
solutions of differential equations
\cite{Che_2009},\cite{Che_2024}-\cite{Che_2024.2}.

The notion of remotely almost periodicity on the real axis
$\mathbb R$ for the scalar functions was introduced and studied by
Sarason D. \cite{Sar_1984}. Remotely almost periodic functions on
the semi-axis $\mathbb R_{+}$ with the values in the Banach space
were introduced and studied by Ruess W. M. and Summers W. H.
\cite{RS_1986} (see also \cite{Che_2024,Che_2024_01} and
\cite{Che_2024_1}-\cite{Che_2024.2}). Remotely almost periodic
functions on the real axis with the values in Banach spaces were
introduced and studied by Baskakov A. G. \cite{Bas_2013}. He calls
theses functions "almost periodic at infinity". Remotely almost
periodic (on the real-axis $\mathbb R$) solutions of ordinary
differential equations with remotely almost periodic coefficient
were studied by Maulen C., Castillo S., Kostic M. and Pinto M.
\cite{MCKP_2021}.

The paper is organized as follows. In the second section we
collect some known notions and facts about remotely almost
periodic motions of dynamical systems and remotely almost periodic
functions. In the third section we collect some facts about the
linear hyperbolic (satisfying the condition of exponential
dichotomy) cocycle over semi-group dynamical system on the
semi-axis $\mathbb R_{+}$. The fourth section is dedicated to the
study the remotely almost periodic on the semi-axis $\mathbb
R_{+}$ solutions of linear differential equations wit remotely
almost periodic coefficients. In the fifth and sixth sections we
study the problem of existence at least one two-side remotely
almost periodic solution of linear and semi-linear differential
equations with two-sided remotely almost periodic (in time)
coefficients.

\section{Preliminary}\label{Sec2}


Let $X$ and $Y$ be two complete metric spaces, $\mathbb
R:=(-\infty,+\infty)$ and $\mathbb T \in \{\mathbb R,\ \mathbb
R_{+}\}$ and $(X,\mathbb R_{+},\pi)$ (respectively, $(Y,\mathbb R,
\sigma )$) be an autonomous one-sided (respectively, two-sided)
dynamical system on $X$ (respectively, $Y$).

Let $(X,\mathbb T,\pi)$ be a dynamical system and
$\pi(t,x)=\pi^{t}x=xt$.

\begin{definition}\label{defSP1} A point $x\in X$ (respectively, a motion $\pi(t,x)$) is
said to be:
\begin{enumerate}
\item[-] stationary, if $\pi(t,x)=x$ for any $t\in \mathbb T$;
\item[-] $\tau$-periodic ($\tau >0$ and $\tau \in \mathbb T$), if
$\pi(\tau,x)=x$; \item[-] asymptotically stationary (respectively,
asymptotically $\tau$-periodic), if there exists a stationary
(respectively, $\tau$-periodic) point $p\in X$ such that
\begin{equation}\label{eqAP1*}
\lim\limits_{t\to \infty}\rho(\pi(t,x),\pi(t,p))=0.\nonumber
\end{equation}
\end{enumerate}
\end{definition}

\begin{theorem}\label{thAAP1}\cite[Ch.I]{Che_2009} A point $x\in X$ is asymptotically $\tau$-periodic if and
only if the sequences $\{\pi(k\tau,x)\}_{k=0}^{\infty}$ converges.
\end{theorem}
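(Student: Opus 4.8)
The plan is to argue both implications directly from Definition~\ref{defSP1}, with essentially all of the work going into the converse.

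For the \emph{necessity}, I would start from a $\tau$-periodic point $p$ realizing $\lim_{t\to+\infty}\rho(\pi(t,x),\pi(t,p))=0$. Since $\pi(\tau,p)=p$, the semigroup identity gives $\pi(k\tau,p)=p$ for every $k\in\mathbb N$, so restricting the limit to the arithmetic progression $t=k\tau$ yields $\rho(\pi(k\tau,x),p)\to 0$; hence $\{\pi(k\tau,x)\}_{k=0}^{\infty}$ converges (to $p$). This direction is immediate.

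For the \emph{sufficiency}, suppose $\pi(k\tau,x)\to p$. The first step is to see that $p$ is $\tau$-periodic: by continuity of the time-$\tau$ map, $\pi((k+1)\tau,x)=\pi(\tau,\pi(k\tau,x))\to\pi(\tau,p)$, while $\{\pi((k+1)\tau,x)\}_{k\ge0}$ is a subsequence of $\{\pi(k\tau,x)\}_{k\ge0}$ and so converges to $p$; uniqueness of limits in the metric space $X$ forces $\pi(\tau,p)=p$, and then $\pi(k\tau,p)=p$ for all $k$. The second step is a reduction: for $t\ge 0$ write $t=k\tau+s$ with $k=\lfloor t/\tau\rfloor$ and $s\in[0,\tau)$; using the flow identity and $\pi(k\tau,p)=p$,
\[
\rho(\pi(t,x),\pi(t,p))=\rho\bigl(\pi(s,\pi(k\tau,x)),\,\pi(s,p)\bigr),
\]
so it remains only to show that $\pi(k\tau,x)\to p$ forces $\sup_{s\in[0,\tau]}\rho\bigl(\pi(s,\pi(k\tau,x)),\pi(s,p)\bigr)\to 0$ as $k\to\infty$.

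The hard part is exactly this last uniform estimate, i.e.\ upgrading convergence of the iterates to convergence uniform in the phase variable $s$. I would prove it by a compactness (finite-subcover) argument on the segment $[0,\tau]$. Fix $\eps>0$; for each $s_0\in[0,\tau]$ joint continuity of $\pi$ at $(s_0,p)$ provides a relatively open $V_{s_0}\ni s_0$ in $[0,\tau]$ and $\delta_{s_0}>0$ with $\rho(\pi(s,y),\pi(s_0,p))<\eps/2$ whenever $s\in V_{s_0}$ and $\rho(y,p)<\delta_{s_0}$. Choosing a finite subcover $V_{s_1},\dots,V_{s_n}$ of the compact set $[0,\tau]$ and $\delta:=\min_{1\le i\le n}\delta_{s_i}>0$, the triangle inequality gives $\rho(\pi(s,y),\pi(s,p))<\eps$ for all $s\in[0,\tau]$ whenever $\rho(y,p)<\delta$. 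Finally pick $N$ with $\rho(\pi(k\tau,x),p)<\delta$ for $k\ge N$: then $\rho(\pi(t,x),\pi(t,p))<\eps$ for every $t\ge N\tau$, which is the required $\lim_{t\to+\infty}\rho(\pi(t,x),\pi(t,p))=0$, so $x$ is asymptotically $\tau$-periodic with witness $p$. (The argument is insensitive to whether $\mathbb T=\mathbb R_{+}$ or $\mathbb T=\mathbb R$, since only $t\to+\infty$ enters.)
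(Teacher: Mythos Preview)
Your proof is correct. Note, however, that the paper does not supply its own proof of this theorem: it is quoted from \cite[Ch.I]{Che_2009} without argument, so there is nothing in the present paper to compare against. Your argument is the standard one and would be appropriate here; the compactness step upgrading pointwise convergence $\pi(k\tau,x)\to p$ to uniform-in-$s\in[0,\tau]$ convergence of $\pi(s,\pi(k\tau,x))\to\pi(s,p)$ via joint continuity of $\pi$ is exactly what is needed, and your triangle-inequality bookkeeping (comparing both $\pi(s,y)$ and $\pi(s,p)$ to the anchor $\pi(s_0,p)$) is sound.
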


\begin{definition}\label{defLS1} A point $\tilde{x}\in X$ is said
to be $\omega$-limit for $x\in X$ if there exists a sequence
$\{t_k\}\subset \mathbb S_{+}$ such that $t_k\to +\infty$ and
$\pi(t_k,x)\to \tilde{x}$ as $k\to \infty$.
\end{definition}

Denote by $\omega_{x}$ the set of all $\omega$-limit points of
$x\in X$.

\begin{definition}\label{defSAP1}  We will call
a point $x\in X$ (respectively, a motion $\pi(t,x)$) remotely
$\tau$-periodic ($\tau\in \mathbb T$ and $\tau
>0\widetilde{}$) if
\begin{equation}\label{eqSAP_1}
 \lim\limits_{t\to
+\infty}\rho(\pi(t+\tau,x),\pi(t,x))=0 .\nonumber
\end{equation}
\end{definition}


\begin{definition}\label{defLS02} A point $x$ is called Lagrange
stable (respectively, positively Lagrange stable), if the
trajectory $\Sigma_{x}:=\{\pi(t,x)|\ t\in \mathbb T\}$
(respectively, semi-trajectory $\Sigma^{+}_{x}:=\{\pi(t,x)|\ t\ge
0\}$) is a precompact subset of $X$.
\end{definition}

\begin{theorem}\label{th1.3.9}\cite[Ch.I]{Che_2020} Let $x\in X$ be
positively Lagrange stable and $\tau\in\mathbb T\ (\tau >0)$. Then
the following statements are equivalent:
\begin{enumerate}
\item[a.] the motion $\pi(t,x)$ is remotely $\tau$-periodic;
\item[b.] any point $p\in\omega_{x}$ is $\tau$-periodic.
\end{enumerate}
\end{theorem}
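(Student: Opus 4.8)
The plan is to prove the two implications separately, using in both directions only the continuity of the dynamical system and the semigroup identity $\pi(t+\tau,x)=\pi(\tau,\pi(t,x))$ together with uniqueness of limits in the metric space $X$.

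For the implication a.\ $\Rightarrow$ b., I would take an arbitrary $p\in\omega_{x}$ and, by Definition \ref{defLS1}, a sequence $t_k\to+\infty$ with $\pi(t_k,x)\to p$. By continuity of $\pi$, $\pi(t_k+\tau,x)=\pi(\tau,\pi(t_k,x))\to\pi(\tau,p)$. On the other hand, since $\pi(t,x)$ is remotely $\tau$-periodic, $\rho(\pi(t_k+\tau,x),\pi(t_k,x))\to 0$, so $\pi(t_k+\tau,x)\to p$ as well. Uniqueness of the limit then gives $\pi(\tau,p)=p$, i.e.\ $p$ is $\tau$-periodic.

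For b.\ $\Rightarrow$ a.\ I would argue by contradiction. If $\pi(t,x)$ is not remotely $\tau$-periodic, there exist $\eps>0$ and a sequence $t_k\to+\infty$ with $\rho(\pi(t_k+\tau,x),\pi(t_k,x))\ge\eps$ for all $k$. Since $x$ is positively Lagrange stable, $\Sigma^{+}_{x}$ is precompact, so, passing to a subsequence, we may assume $\pi(t_k,x)\to p$ for some $p\in X$; because $t_k\to+\infty$, the point $p$ belongs to $\omega_{x}$. By hypothesis b., $p$ is $\tau$-periodic, hence, by continuity, $\pi(t_k+\tau,x)=\pi(\tau,\pi(t_k,x))\to\pi(\tau,p)=p$. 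Therefore $\rho(\pi(t_k+\tau,x),\pi(t_k,x))\to\rho(p,p)=0$, contradicting the choice of the sequence.

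The argument is essentially elementary; the only place where the hypotheses are genuinely needed is the compactness step in b.\ $\Rightarrow$ a., which guarantees that the ``bad'' sequence $\{\pi(t_k,x)\}$ accumulates on an actual point of $\omega_{x}$, and the continuity of $\pi$ (in the first variable, or jointly) used to push the limit through $\pi(\tau,\cdot)$. I do not expect any real obstacle beyond making sure these standard facts about dynamical systems on metric spaces are invoked correctly.
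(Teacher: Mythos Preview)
Your argument is correct in both directions and is exactly the standard proof of this equivalence. Note, however, that the paper does not actually prove Theorem~\ref{th1.3.9}: it is quoted from \cite[Ch.~I]{Che_2020} without proof, so there is no ``paper's own proof'' to compare against. Your write-up would serve perfectly well as the missing justification; the only hypotheses used are precisely the ones you identify (continuity of $\pi$, the semigroup law, and precompactness of $\Sigma^{+}_{x}$ for the direction b.\ $\Rightarrow$ a.).
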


\begin{definition}\label{defSAP2} A point $x$ (respectively, a
motion $\pi(t,x)$) is said to be remotely stationary, if it is
remotely $\tau$-periodic for any $\tau \in \mathbb T$.
\end{definition}

\begin{coro}\label{corSAP1} Let $x\in X$ be positively Lagrange stable. Then the following
statements are equivalent:
\begin{enumerate}
\item[a.] the motion $\pi(t,x)$ is remotely stationary; \item[b.]
any point $p\in\omega_{x}$ is stationary.
\end{enumerate}
\end{coro}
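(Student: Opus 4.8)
The plan is to deduce the corollary directly from Theorem \ref{th1.3.9} together with the definition of a remotely stationary motion (Definition \ref{defSAP2}). The key elementary observation that glues the two together is: a point $p\in X$ is stationary if and only if it is $\tau$-periodic for every $\tau\in\mathbb T$ with $\tau>0$. The only nontrivial direction of this observation — that $\pi(\tau,p)=p$ for all $\tau>0$ forces $\pi(t,p)=p$ for all $t\in\mathbb T$ — is immediate when $\mathbb T=\mathbb R_{+}$ (adjoining $\pi(0,p)=p$), and when $\mathbb T=\mathbb R$ one recovers the negative times by applying $\pi(-\tau,\cdot)$ to the identity $\pi(\tau,p)=p$ and using the group property $\pi(-\tau,\pi(\tau,p))=\pi(0,p)=p$.

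Granting this, I would argue as follows. For the implication (a) $\Rightarrow$ (b): if $\pi(t,x)$ is remotely stationary then, by Definition \ref{defSAP2}, it is remotely $\tau$-periodic for every $\tau\in\mathbb T$ with $\tau>0$; since $x$ is positively Lagrange stable, Theorem \ref{th1.3.9} applies for each such $\tau$ and yields that every $p\in\omega_{x}$ is $\tau$-periodic for every $\tau>0$, hence stationary by the observation above. For the converse (b) $\Rightarrow$ (a): if every $p\in\omega_{x}$ is stationary, then every such $p$ is in particular $\tau$-periodic for each fixed $\tau\in\mathbb T$, $\tau>0$; Theorem \ref{th1.3.9} then gives that $\pi(t,x)$ is remotely $\tau$-periodic for every such $\tau$, which is precisely the assertion that $\pi(t,x)$ is remotely stationary (Definition \ref{defSAP2}).

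Since positive Lagrange stability of $x$ is assumed throughout — exactly the hypothesis needed for Theorem \ref{th1.3.9} to be invoked for each $\tau$ — no additional hypotheses enter. There is essentially no hard step here: the corollary is a direct quantifier manipulation ("for all $\tau$") applied to Theorem \ref{th1.3.9}, and the only point requiring a moment's care is the equivalence "stationary $\Leftrightarrow$ $\tau$-periodic for all $\tau>0$" in the two-sided case $\mathbb T=\mathbb R$, where one must use invertibility of the flow.
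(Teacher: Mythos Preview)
Your proposal is correct and follows exactly the paper's approach: the paper's proof consists of the single sentence ``This statement follows directly from Definition \ref{defSAP2} and Theorem \ref{th1.3.9},'' and your argument simply spells out the quantifier manipulation and the elementary equivalence ``stationary $\Leftrightarrow$ $\tau$-periodic for all $\tau>0$'' that this sentence leaves implicit.
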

\begin{proof} This statement follows directly from Definition \ref{defSAP2}
and Theorem \ref{th1.3.9}.
\end{proof}

\begin{definition}\label{defRAP0} A subset $A\subseteq \mathbb
T$ is said to be relatively dense in $\mathbb T$ if there exists a
positive number $l\in \mathbb T$ such that $[a,a+l]\bigcap A \not=
\emptyset$ for any $a\in \mathbb T$, where $[a,a+l]:=\{x\in
\mathbb T|\ a\le x\le a+l\}$.
\end{definition}


\begin{definition}\label{defAP1} A point $x\in X$ of dynamical
system $(X,\mathbb T,\pi)$ is said to be:
\begin{enumerate}
\item almost periodic if for any $\varepsilon >0$ the set
\begin{equation}\label{eqAP1}
\mathcal P(\varepsilon,p):=\{\tau \in \mathbb T|\
\rho(\pi(t+\tau,p),\pi(t,p))<\varepsilon \ \ \mbox{for any}\ t\in
\mathbb T\}\nonumber
\end{equation}
is relatively dense in $\mathbb T$; \item asymptotically
stationary (respectively, asymptotically $\tau$-periodic,
asymptotically almost periodic or positively asymptotically
Poisson stable) if there exists a stationary (respectively,
$\tau$-periodic, almost periodic or positively Poisson stable)
point $p\in X$ such that
\begin{equation}\label{eqAP3}
\lim\limits_{t\to \infty}\rho(\pi(t,x),\pi(t,p))=0.\nonumber
\end{equation}
\end{enumerate}
\end{definition}

\begin{definition}\label{defRAP1} A point $x\in X$ (respectively,
a motion $\pi(t,x)$) is said to be remotely almost periodic
\cite{RS_1986} if for arbitrary positive number $\varepsilon$
there exists a relatively dense subset $\mathcal
P(\varepsilon,x)\subseteq \mathbb T$ such that for any $\tau \in
\mathcal P(\varepsilon,x)$ there exists a number
$L(\varepsilon,x,\tau)>0$ for which we have
\begin{equation}\label{eqRAP1}
\rho(\pi(t+\tau,x),\pi(t,x))<\varepsilon \nonumber
\end{equation}
for any $t\ge L(\varepsilon,x,\tau)$.
\end{definition}

\begin{remark}\label{remAP1} Every almost periodic point $x\in X$
is remotely almost periodic.
\end{remark}

\begin{lemma}\label{lRAP_01} \cite{Che_2024_1} Every remotely $\tau$-periodic (respectively,
remotely stationary) point $x$ of the dynamical system $(X,\mathbb
T,\pi)$ is remotely almost periodic.
\end{lemma}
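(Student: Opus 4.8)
The statement to prove (Lemma \ref{lRAP_01}) is that every remotely $\tau$-periodic, and more generally every remotely stationary, point $x$ of $(X,\mathbb{T},\pi)$ is remotely almost periodic in the sense of Definition \ref{defRAP1}. The plan is to verify Definition \ref{defRAP1} directly by producing, for each $\varepsilon>0$, a relatively dense set $\mathcal{P}(\varepsilon,x)\subseteq\mathbb{T}$ together with the threshold function $L(\varepsilon,x,\tau)$.

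First I would treat the remotely $\tau$-periodic case. By Definition \ref{defSAP1}, $\lim_{t\to+\infty}\rho(\pi(t+\tau,x),\pi(t,x))=0$. Iterating this, for every $k\in\mathbb{N}$ one has $\lim_{t\to+\infty}\rho(\pi(t+k\tau,x),\pi(t,x))=0$ as well; this follows by writing $\rho(\pi(t+k\tau,x),\pi(t,x))\le\sum_{j=0}^{k-1}\rho(\pi(t+(j+1)\tau,x),\pi(t+j\tau,x))$ and noting that each summand is of the form $\rho(\pi(s+\tau,x),\pi(s,x))$ with $s=t+j\tau\to+\infty$ as $t\to+\infty$. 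Hence the set $\mathcal{P}(\varepsilon,x):=\{k\tau:\ k\in\mathbb{N}\}$ works for every $\varepsilon>0$: given $\tau'=k\tau\in\mathcal{P}(\varepsilon,x)$, choose $L(\varepsilon,x,\tau')>0$ so large that $\rho(\pi(t+k\tau,x),\pi(t,x))<\varepsilon$ for all $t\ge L(\varepsilon,x,\tau')$, which is possible precisely by the limit just established. Finally, $\{k\tau:k\in\mathbb{N}\}$ is relatively dense in $\mathbb{T}$ with inclusion length $l=\tau$ (or $l=2\tau$ if one wants to be safe near $a=0$), so Definition \ref{defRAP1} is satisfied.

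For the remotely stationary case, by Definition \ref{defSAP2} the point $x$ is remotely $\tau$-periodic for every $\tau\in\mathbb{T}$, $\tau>0$; in particular it is remotely $\tau_0$-periodic for some fixed $\tau_0>0$, and the previous paragraph applies verbatim with $\tau_0$ in place of $\tau$. (Alternatively one can take $\mathcal{P}(\varepsilon,x)=\mathbb{T}\cap(0,\infty)$ itself, which is trivially relatively dense, since each individual $\tau>0$ admits its own threshold $L$ by remote $\tau$-periodicity.)

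I do not anticipate a genuine obstacle here; the lemma is essentially an unwinding of the definitions. The only point requiring mild care is the iteration step, i.e.\ passing from the single shift $\tau$ to all multiples $k\tau$, and the verification that a single arithmetic progression $\{k\tau\}$ is relatively dense in $\mathbb{T}$ — both are routine triangle-inequality arguments. One should also note that no Lagrange stability or compactness hypothesis is needed for this direction (unlike the equivalences in Theorem \ref{th1.3.9}), so the proof is purely formal.
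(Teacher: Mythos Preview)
The paper does not supply its own proof of this lemma; it merely cites the external reference \cite{Che_2024_1}. So there is nothing to compare against, and your argument must stand on its own.

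Your approach is the natural one and is correct in substance: iterate the single-shift limit via the triangle inequality to get $\lim_{t\to+\infty}\rho(\pi(t+k\tau,x),\pi(t,x))=0$ for every fixed $k$, then take the arithmetic progression $\{k\tau\}$ as the relatively dense set. One small technical point deserves attention. You set $\mathcal P(\varepsilon,x)=\{k\tau:k\in\mathbb N\}$ and claim it is relatively dense in $\mathbb T$. This is fine when $\mathbb T=\mathbb R_+$, but when $\mathbb T=\mathbb R$ (which the paper explicitly allows) the positive multiples alone are not relatively dense in $\mathbb R$, since intervals $[a,a+l]$ with $a$ very negative miss them. The remedy is immediate: take $\mathcal P(\varepsilon,x)=\{k\tau:k\in\mathbb Z\}$. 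For $k<0$ the required limit follows from the positive case by substituting $s=t+k\tau$, so that $\rho(\pi(t+k\tau,x),\pi(t,x))=\rho(\pi(s,x),\pi(s+|k|\tau,x))$ with $s\to+\infty$. With this adjustment the proof is complete, and your remark that no Lagrange-stability hypothesis is needed is correct.
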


Denote by $\mathfrak L_{x}^{\pm \infty}:=\{\{t_n\}\subset \mathbb
T|\ \{\pi(t_n,x)\}$ converges and $t_n\to +\infty$ as $n\to
\pm\infty$ $\}$ and $\mathfrak L_{x}:=\mathfrak
L_{x}^{-\infty}\bigcup \mathfrak L_{x}^{+\infty}$.

\begin{definition}\label{defRAP01} Let $(X,\mathbb T,\pi)$ and $(Y,\mathbb
T,\sigma)$ be two dynamical systems. A point $x\in X$ is said to
be
\begin{enumerate}
\item positively (respectively, negatively) remotely comparable by
the character of recurrence with the point $y\in Y$ if $\mathfrak
L_{y}^{+\infty}\subseteq \mathfrak L_{x}^{+\infty}$ (respectively,
$\mathfrak L_{y}^{-\infty}\subseteq \mathfrak L_{x}^{-\infty}$);
\item remotely comparable (or two-sided comparable) with the point
$y\in Y$ if $\mathfrak L_{y}\subseteq \mathfrak L_{x}$.
\end{enumerate}
\end{definition}

\begin{theorem}\label{thRAP4.0} \cite[Ch.I]{Che_2020} Let $y\in Y$ be asymptotically stationary
(respectively, asymptotically $\tau$-periodic or asymptotically
almost periodic) point. If the point $x\in X$ is positively
remotely comparable by the character of recurrence with the point
$y$, then the point $x$ is also asymptotically stationary
(respectively, asymptotically $\tau$-periodic or asymptotically
almost periodic).
\end{theorem}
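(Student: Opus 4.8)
The plan is to reduce the statement of Theorem~\ref{thRAP4.0} to the already-established Theorem~\ref{thAAP1} (in the $\tau$-periodic case), and to its analogues for the stationary and almost periodic cases, by exploiting the hypothesis $\mathfrak L_{y}^{+\infty}\subseteq \mathfrak L_{x}^{+\infty}$ purely at the level of convergent sequences of times. Since positive remote comparability is a statement about which sequences $t_n\to+\infty$ make $\{\sigma(t_n,y)\}$ versus $\{\pi(t_n,x)\}$ converge, the natural strategy is: take the structure we have for $y$, translate it into a statement about convergence of appropriate time-sequences, push that through the inclusion $\mathfrak L_{y}^{+\infty}\subseteq \mathfrak L_{x}^{+\infty}$, and then read the resulting convergence back as the asymptotic structure of $x$.

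Concretely, I would proceed case by case. For the $\tau$-periodic case: by Theorem~\ref{thAAP1}, $y$ asymptotically $\tau$-periodic is equivalent to convergence of $\{\sigma(k\tau,y)\}_{k=0}^{\infty}$; hence the sequence $t_k:=k\tau$ belongs to $\mathfrak L_{y}^{+\infty}$. By the comparability hypothesis, $\{k\tau\}\in\mathfrak L_{x}^{+\infty}$, i.e. $\{\pi(k\tau,x)\}_{k=0}^{\infty}$ converges; applying Theorem~\ref{thAAP1} in the reverse direction gives that $x$ is asymptotically $\tau$-periodic. The stationary case is the special situation where this holds for a dense set of $\tau$ (or, equivalently, one argues via Corollary~\ref{corSAP1}/Theorem~\ref{th1.3.9}): one shows that every $\omega$-limit point of $x$ is stationary by transferring the corresponding property of $\omega$-limit points of $y$ through the inclusion of limit sequences. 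For the almost periodic case I would use the sequential (Bochner-type) characterization of asymptotic almost periodicity: $y$ is asymptotically almost periodic iff from every sequence $t_n\to+\infty$ one can extract a subsequence $t_{n_k}$ with $\{\sigma(t_{n_k},y)\}$ convergent \emph{and} the convergence being, in an appropriate uniform sense, that of an almost periodic limit; the inclusion $\mathfrak L_{y}^{+\infty}\subseteq\mathfrak L_{x}^{+\infty}$ then forces the analogous extraction property for $x$.

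The main obstacle I anticipate is the almost periodic case, because there the relevant invariant is not merely ``some sequence of shifts converges'' but a compactness-plus-uniformity condition (Bochner's criterion) on the full family of shifts, and one has to check that the set-theoretic inclusion of convergent time-sequences really does carry all of that structure. The clean way around this is to work on the hull / in the framework of limit sets: positive Lagrange stability (implicit in the asymptotic recurrence of $y$, and needed on the $x$-side as well, presumably guaranteed by comparability together with precompactness assumptions) lets one pass to $\omega_{x}$ and $\omega_{y}$, and then asymptotic almost periodicity of $x$ reduces to almost periodicity of the motions on $\omega_{x}$, which in turn follows from the corresponding statement on $\omega_{y}$ via the comparability of limit sequences. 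A secondary, more technical point is the bookkeeping of the two ``directions'' encoded in $\mathfrak L_{x}^{+\infty}$ (the times go to $+\infty$ while the index may go to $+\infty$ or $-\infty$); I would fix conventions early so that the subsequence extractions line up correctly with Definition~\ref{defRAP01}.
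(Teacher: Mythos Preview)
The paper does not prove Theorem~\ref{thRAP4.0}; it is quoted from \cite[Ch.I]{Che_2020} without argument, so there is no in-paper proof to compare against. I comment only on the soundness of your sketch.

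Your argument for the asymptotically $\tau$-periodic case is correct and efficient: Theorem~\ref{thAAP1} reduces the property to convergence of $\{\pi(k\tau,\cdot)\}$, and the inclusion $\mathfrak L_{y}^{+\infty}\subseteq\mathfrak L_{x}^{+\infty}$ transfers this at once. The stationary case is simpler than you indicate and does not need Corollary~\ref{corSAP1} (which would impose Lagrange stability of $x$ as an extra hypothesis): if $y$ is asymptotically stationary then \emph{every} sequence $t_n\to+\infty$ lies in $\mathfrak L_{y}^{+\infty}$, hence in $\mathfrak L_{x}^{+\infty}$; interleaving any two such sequences shows the limits coincide, so $\pi(t,x)$ converges, and a shift argument shows the limit is fixed.

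For the almost periodic case your diagnosis of the obstacle is right, and the route through $\omega$-limit sets is the standard one, but the step you label ``presumably guaranteed'' --- positive Lagrange stability of $x$ --- is the actual crux and must be proved. It does follow: asymptotic almost periodicity of $y$ makes $\Sigma_{y}^{+}$ precompact, so every $t_n\to+\infty$ has a subsequence in $\mathfrak L_{y}^{+\infty}\subseteq\mathfrak L_{x}^{+\infty}$; together with continuity on bounded time-intervals this yields precompactness of $\Sigma_{x}^{+}$. With that in hand, the interleaving trick shows that $\lim_n\sigma(t_n,y)\mapsto\lim_n\pi(t_n,x)$ is a well-defined continuous equivariant map from $\omega_{y}$ onto $\omega_{x}$; since $\omega_{y}$ is minimal almost periodic, so is $\omega_{x}$, and asymptotic almost periodicity of $x$ follows. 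Until you supply this lemma explicitly, the almost periodic part of your proposal has a gap.
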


\begin{theorem}\label{thRAP4} \cite{Che_2024_1} Let $y\in Y$ be Lagrange stable and remotely stationary
(respectively, remotely $\tau$-periodic or remotely almost
periodic) point. If the point $x\in X$ is remotely comparable by
the character of recurrence with the point $y$, then the point $x$
is also remotely stationary (respectively, remotely
$\tau$-periodic or remotely almost periodic).
\end{theorem}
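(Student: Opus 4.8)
The plan is to reduce the assertion about $x$ to the corresponding assertion about $y$ by exploiting the inclusion $\mathfrak L_{y}\subseteq \mathfrak L_{x}$ together with the hypothesis that $y$ is Lagrange stable and remotely recurrent. I will treat the three cases (remotely stationary, remotely $\tau$-periodic, remotely almost periodic) in parallel, since the structure of the argument is the same; it suffices to treat the remotely almost periodic case, as the other two follow either by the same reasoning or, in the case of remote stationarity, by applying the $\tau$-periodic case for every $\tau$ via Definition~\ref{defSAP2}.

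First I would argue by contradiction. Suppose $x$ is \emph{not} remotely almost periodic. Negating Definition~\ref{defRAP1}, there is an $\varepsilon_0>0$ such that for every relatively dense set $P\subseteq\mathbb T$ there exists $\tau\in P$ for which, for each $L>0$, one can find $t\ge L$ with $\rho(\pi(t+\tau,x),\pi(t,x))\ge\varepsilon_0$. On the other hand, $y$ is remotely almost periodic, so for the value $\varepsilon_1>0$ (to be chosen in terms of $\varepsilon_0$ and a modulus of continuity, see below) the set $\mathcal P(\varepsilon_1,y)$ is relatively dense. Applying the negated property to $P=\mathcal P(\varepsilon_1,y)$ produces a fixed $\tau^\ast\in\mathcal P(\varepsilon_1,y)$ and a sequence $t_n\to+\infty$ with
\begin{equation}\label{eqcontra}
\rho(\pi(t_n+\tau^\ast,x),\pi(t_n,x))\ge\varepsilon_0\qquad(n\in\mathbb N).\nonumber
\end{equation}

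Next I would pass to a convergent subsequence. Since $y$ is Lagrange stable, $\Sigma_y$ is precompact, so after refining, $\pi(t_n,y)$ converges and $t_n\to+\infty$; that is, $\{t_n\}\in\mathfrak L_{y}^{+\infty}\subseteq\mathfrak L_{y}\subseteq\mathfrak L_{x}$, whence $\{\pi(t_n,x)\}$ also converges, say $\pi(t_n,x)\to x^\ast$. Because $\tau^\ast\in\mathcal P(\varepsilon_1,y)$, for all large $n$ we have $t_n\ge L(\varepsilon_1,y,\tau^\ast)$, so $\rho(\pi(t_n+\tau^\ast,y),\pi(t_n,y))<\varepsilon_1$; letting $n\to\infty$ and using continuity of $\pi$ in the first argument shows that the limit point $y^\ast:=\lim\pi(t_n,y)$ satisfies $\rho(\pi(\tau^\ast,y^\ast),y^\ast)\le\varepsilon_1$, i.e.\ $y^\ast$ is ``$\varepsilon_1$-almost-fixed'' under $\pi^{\tau^\ast}$. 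The key remaining point — and the main obstacle — is to transfer this near-invariance from $y^\ast$ to $x^\ast$ and thence derive a contradiction with \eqref{eqcontra}. For this I expect one needs the comparability to be used more carefully: one applies it not only to the single sequence $\{t_n\}$ but to the translates $\{t_n+\tau^\ast\}$, observing that $\pi(t_n+\tau^\ast,y)\to y^\ast$ forces (again via $\mathfrak L_y^{+\infty}\subseteq\mathfrak L_x^{+\infty}$, applied to the subsequence making $\pi(t_n+\tau^\ast,x)$ converge) the limit of $\pi(t_n+\tau^\ast,x)$ to exist; and then one must show this limit coincides with $x^\ast$. The natural mechanism is to iterate: look at $\{2t_n\}$, $\{t_n+t_m\}$, etc., or more cleanly to invoke Theorem~\ref{th1.3.9}/Corollary~\ref{corSAP1} on the $\omega$-limit sets, showing that comparability forces $\omega_x$ to inherit the $\tau^\ast$-periodic structure of $\omega_y$. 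Once $x^\ast\in\omega_x$ is shown to be exactly $\tau^\ast$-periodic (or $\varepsilon$-nearly so with $\varepsilon$ arbitrarily small by letting $\varepsilon_1\to0$), the inequality $\rho(\pi(\tau^\ast,x^\ast),x^\ast)\ge\varepsilon_0$ obtained by passing to the limit in \eqref{eqcontra} gives the contradiction. I would therefore organize the proof around: (1) reduction to the almost periodic case; (2) a diagonal extraction so that all of $\pi(t_n,\cdot)$, $\pi(t_n+\tau^\ast,\cdot)$ converge simultaneously in both $X$ and $Y$; (3) using Lagrange stability of $y$ plus $\mathfrak L_y\subseteq\mathfrak L_x$ to push convergence into $X$; (4) using the remote almost periodicity of $y$ to pin down the relation between the $X$-limits; (5) closing by contradiction. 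The delicate step is (4), matching the two $X$-limits, where the precise form of Definition~\ref{defRAP01} about \emph{sequences} (not just single limits) is what makes the argument go through.
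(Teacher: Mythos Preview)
The paper does not prove Theorem~\ref{thRAP4} here; it is quoted from \cite{Che_2024_1} with no argument supplied, so there is no in-paper proof to compare against.

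Assessing your plan on its own merits: your contradiction framework is sound, but step~(4) contains both a slip and an unfilled gap that your suggested mechanisms do not repair. The slip: from $\tau^\ast\in\mathcal P(\varepsilon_1,y)$ and $\pi(t_n,y)\to y^\ast$ one gets $\pi(t_n+\tau^\ast,y)=\pi(\tau^\ast,\pi(t_n,y))\to\pi(\tau^\ast,y^\ast)$ by continuity of the flow, \emph{not} $\to y^\ast$ as you write; what survives is only $\rho(\pi(\tau^\ast,y^\ast),y^\ast)\le\varepsilon_1$. Likewise $\pi(t_n+\tau^\ast,x)\to\pi(\tau^\ast,x^\ast)$ automatically, so the second $X$-limit does not ``coincide with $x^\ast$''; the real task is to show it is $\varepsilon_0$-\emph{close} to $x^\ast$. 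Your proposed tools do not do this: Theorem~\ref{th1.3.9} and Corollary~\ref{corSAP1} concern $\tau$-periodicity of $\omega$-limit points, which is irrelevant when $y$ is merely remotely almost periodic, and ``iterating with $\{2t_n\}$, $\{t_n+t_m\}$'' has no evident bearing on the question. The missing ingredient is the following lemma: the assignment $h\colon\omega_y\to X$, $h(q):=\lim_n\pi(s_n,x)$ for any $\{s_n\}\to+\infty$ with $\pi(s_n,y)\to q$, is well-defined (interleave two such sequences and apply $\mathfrak L_y\subseteq\mathfrak L_x$ to the merged sequence), continuous (a diagonal choice of $s_k$ against $q_k\to q$), hence uniformly continuous on the compact set $\omega_y$, and intertwines the two flows. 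Granting this, $x^\ast=h(y^\ast)$ and $\pi(\tau^\ast,x^\ast)=h(\pi(\tau^\ast,y^\ast))$, so choosing $\varepsilon_1$ by the modulus of uniform continuity of $h$ yields $\rho(\pi(\tau^\ast,x^\ast),x^\ast)<\varepsilon_0$, the desired contradiction. Without this lemma the almost periodic case does not close; by contrast, the remotely $\tau$-periodic case does go through via the straight interleaving trick, since there the two $Y$-limits genuinely coincide.
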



Denote by $C(\mathbb T,X)$ the space of all continuous functions
$\varphi :\mathbb T\to X$ equipped with the distance
\begin{equation*}\label{eqD_01}
\beta(\varphi,\psi):=\sup\limits_{L>0}\min\{\max\limits_{|t|\le
L,\ t\in \mathbb T}\rho(\varphi(t),\psi(t)),L^{-1}\}.
\end{equation*}
The space $(C(\mathbb T,X),d)$ is a complete metric space (see,
for example, \cite[ChI]{Che_2020}).

Let $h\in \mathbb T$, $\varphi \in \mathbb C(\mathbb T,X)$ and
$\varphi^{h}$ be the $h$-translation, i.e.,
$\varphi^{h}(t):=\varphi(t+h)$ for any $t\in \mathbb T$. Denote by
$\sigma_{h}$ the mapping from $C(\mathbb T,X)$ into itself defined
by equality $\sigma_{h}\varphi :=\varphi^{h}$ for any $\varphi \in
C(\mathbb T,X)$. Note that $\sigma_{0}=Id_{C(\mathbb T,X)}$ and
$\sigma_{h_1}\sigma_{h_2}=\sigma_{h_1+h_2}$ for any $h_1,h_2\in
\mathbb T$.

\begin{lemma}\label{lAPF1}\cite[Ch.I]{Che_2020} The mapping $\sigma :\mathbb T\times C(\mathbb T,X)\to C(\mathbb
T,X)$ defined by $\sigma(h,\varphi)=\sigma_{h}\varphi$ for any
$(h,\varphi)\in \mathbb T\times C(\mathbb T,\mathfrak B)$ is
continuous.
\end{lemma}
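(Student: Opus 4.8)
The plan is to reduce $\beta$-convergence to uniform convergence on compact subsets of $\mathbb T$ and then control the two perturbations (in $\varphi$ and in $h$) separately by the triangle inequality. Since $\mathbb T\times C(\mathbb T,X)$ is a metric space, it is enough to check sequential continuity: if $h_n\to h_0$ in $\mathbb T$ and $\varphi_n\to\varphi_0$ in $(C(\mathbb T,X),\beta)$, I must show $\sigma_{h_n}\varphi_n\to\sigma_{h_0}\varphi_0$.

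First I would establish the elementary equivalence that $\beta(\psi_n,\psi)\to 0$ if and only if $\max_{|t|\le L,\,t\in\mathbb T}\rho(\psi_n(t),\psi(t))\to 0$ for every $L>0$. For the ``only if'' direction, for each fixed $L$ one has $\min\{\max_{|t|\le L}\rho(\psi_n(t),\psi(t)),L^{-1}\}\le\beta(\psi_n,\psi)$, and once the right-hand side is $<L^{-1}$ the minimum must coincide with $\max_{|t|\le L}\rho(\psi_n(t),\psi(t))$. For the ``if'' direction, given $\varepsilon>0$, all terms with $L>1/\varepsilon$ are automatically $<\varepsilon$ since $L^{-1}<\varepsilon$, while for $L\le 1/\varepsilon$ the term is dominated by $\max_{|t|\le 1/\varepsilon}\rho(\psi_n(t),\psi(t))$, which tends to $0$; taking the supremum over $L$ then gives $\beta(\psi_n,\psi)\le\varepsilon$ for $n$ large.

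The core estimate is the second step: fix $L>0$ and $\varepsilon>0$, and set $M:=|h_0|+1$, so that $|h_n|\le M$ for all large $n$. For $t\in[-L,L]\cap\mathbb T$ write $\rho(\varphi_n(t+h_n),\varphi_0(t+h_0))\le\rho(\varphi_n(t+h_n),\varphi_0(t+h_n))+\rho(\varphi_0(t+h_n),\varphi_0(t+h_0))$. Since $t+h_n\in[-(L+M),L+M]\cap\mathbb T$, the first summand is at most $\max_{|s|\le L+M,\,s\in\mathbb T}\rho(\varphi_n(s),\varphi_0(s))$, which tends to $0$ by the first step applied on the compact window $[-(L+M),L+M]\cap\mathbb T$. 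For the second summand, $\varphi_0$ is uniformly continuous on that same compact set and $|h_n-h_0|\to 0$, so it is $<\varepsilon/2$ for $n$ large, uniformly in $t$. Hence $\max_{|t|\le L}\rho(\varphi_n(t+h_n),\varphi_0(t+h_0))\to 0$ for every $L$, and the equivalence from the first step yields $\beta(\sigma_{h_n}\varphi_n,\sigma_{h_0}\varphi_0)\to 0$, which is the asserted continuity.

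The main obstacle is precisely that $\sigma_h$ is not an isometry of $(C(\mathbb T,X),\beta)$ — translating $\varphi$ by $h$ shifts the observation window $[-L,L]$ — so one cannot naively factor apart the two sources of perturbation. The remedy is to enlarge the compact set by the uniformly bounded amount $M$ coming from the convergent sequence $h_n$, and to use that the shift perturbation is tested against the single fixed limit function $\varphi_0$, which is uniformly continuous on that enlarged compact set. The remaining ingredients — boundedness of convergent sequences in $\mathbb T$ and compactness of $[-L,L]\cap\mathbb T$ — are routine.
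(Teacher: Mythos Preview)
Your argument is correct and is the standard route: reduce $\beta$-convergence to uniform convergence on compact windows, then split via the triangle inequality into a ``change of function'' term (handled on an enlarged window $[-(L+M),L+M]\cap\mathbb T$) and a ``shift of argument'' term (handled by uniform continuity of the fixed limit $\varphi_0$ on that compact set). The paper does not supply its own proof of this lemma; it merely cites \cite[Ch.I]{Che_2020}, so there is nothing to compare against beyond noting that your proof is the expected one.
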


\begin{coro}\label{corAPF1} The triplet $(C(\mathbb T,X),\mathbb
T,\sigma)$ is a dynamical system (shift dynamical system or
Bebutov's dynamical system).
\end{coro}

Let $\mathfrak B$ be a  Banach space over the field $P$
($P=\mathbb R$ or $\mathbb C$) with the norm $|\cdot|$,
$\rho(u,v):=|u-v|$ ($u,v\in \mathfrak B$) and $\tau\in \mathbb T$
be a positive number. Denote by $C_{0}(\mathbb T,\mathfrak
B)):=\{\varphi \in C(\mathbb T,\mathfrak B)$ such that
$\lim\limits_{t\to +\infty}|\varphi(t)|=0\}$ and $C_{\tau}(\mathbb
T,\mathfrak B):=\{\varphi \in C(\mathbb T,\mathfrak B)|\
\varphi(t+\tau)=\varphi(t)$ for any $t\in \mathbb T\}$.

\begin{definition}\label{defAPF1} Let $\tau \in \mathbb T$ and $\tau >0$. A function $\varphi \in C(\mathbb
T,X)$is said to be:
\begin{enumerate}
\item asymptotically $\tau$-periodic (respectively, asymptotically
stationary) if there exist $p\in C_{\tau}(\mathbb T,\mathfrak B)$
and $r\in C_{0}(\mathbb T,\mathfrak B)$ such that
$\varphi(t)=p(t)+r(t)$ for any $t\in \mathbb T$; \item remotely
$\tau$-periodic \cite{HPT_2008,Kal_2010} (respectively, remotely
stationary) if
\begin{equation}\label{eqAPF2}
\lim\limits_{t\to +\infty}\rho(\varphi(t+\tau),\varphi(t))=0
\end{equation}
(respectively, remotely $\tau$-periodic for any $\tau
>0$);
\item remotely almost periodic
\cite{Bas_2013,Bas_2015,BSS_2019,RS_1986,Sar_1984} if for every
$\varepsilon
>0$ there exists a relatively dense subset $\mathcal
P(\varepsilon,\varphi)$ such that for any $\tau \in \mathcal
P(\varepsilon,\varphi)$ we have a positive number
$L(\varepsilon,\varphi,\tau)$ so that
\begin{equation}\label{eqAPF2.1}
\rho(\varphi(t+\tau),\varphi(t))<\varepsilon \nonumber
\end{equation}
for any $t\ge L(\varepsilon,\varphi,\tau)$.
\end{enumerate}
\end{definition}

\begin{remark}\label{remAPF_02} 1. The notion of remotely almost periodicity
on the real axis $\mathbb R$ for the scalar functions was
introduced and studied by Sarason D. \cite{Sar_1984}.

2. Remotely almost periodic functions on the semi-axis $\mathbb
R_{+}$ with the values in the Banach space were introduced and
studied by Ruess W. M. and Summers W. H. \cite{RS_1986}.

3. Remotely almost periodic functions on the real axis with the
values in the Banach spaces were introduced and studied by
Baskakov A. G. \cite{Bas_2013}. He calls theses functions "almost
periodic at infinity".

4. The functions with the property (\ref{eqAPF2}) in the work
\cite{HPT_2008} (respectively, in the work \cite{Kal_2010}) is
called $S$-asymptotically $\tau$-periodic (respectively,
$\tau$-periodic at the infinity).
\end{remark}

\begin{remark}\label{remAPF1} 1. Every remotely
$\tau$-periodic function is remotely almost periodic.

2. Every asymptotically $\tau$-periodic (respectively,
asymptotically stationary) function $\varphi \in C(\mathbb
T,\mathfrak B)$ is remotely $\tau$-periodic
\cite{HPT_2008,Kal_2010} (respectively, remotely stationary).
\end{remark}

\begin{definition}\label{defAPF02} A function $\varphi \in C(\mathbb
T,X)$ is said to be Lagrange stable if the motion
$\sigma(t,\varphi)$ is so in the shift dynamical system
$(C(\mathbb T,X),\mathbb T,\sigma)$.
\end{definition}

\begin{lemma}\label{lAPF02}  {\rm(\cite{Sel_1971,sib})}  A function $\varphi \in C(\mathbb T,X)$
is Lagrange stable if and only if the following conditions are
fulfilled:
\begin{enumerate}
\item the set $\varphi(\mathbb T):=\{\varphi(t)|\ t\in \mathbb
T\}$ is precompact in $X$; \item the function $\varphi$ is
uniformly continuous on $\mathbb T$.
\end{enumerate}
\end{lemma}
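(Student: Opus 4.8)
The statement to prove is Lemma~\ref{lAPF02}: a function $\varphi \in C(\mathbb{T}, X)$ is Lagrange stable (i.e.\ the motion $\sigma(t,\varphi)$ in the Bebutov system has precompact trajectory) if and only if (i) $\varphi(\mathbb{T})$ is precompact in $X$ and (ii) $\varphi$ is uniformly continuous on $\mathbb{T}$.

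Wait — this is a classical result (Sell, Sibirsky). Let me think about how I'd prove it.

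---

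**Proof plan.**

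The plan is to characterize precompactness of the translation-trajectory $\Sigma_\varphi = \{\sigma_h \varphi = \varphi^h : h \in \mathbb{T}\}$ in the metric space $(C(\mathbb{T},X), \beta)$ via the Arzelà–Ascoli theorem, and to translate the Arzelà–Ascoli conditions (pointwise precompactness and equicontinuity of the translate family) into the two stated conditions on $\varphi$ itself.

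First I would prove the necessity direction. Assume $\Sigma_\varphi$ is precompact in $(C(\mathbb{T},X),\beta)$. To get condition (i), observe that $\varphi(\mathbb{T}) = \{\varphi^h(0) : h \in \mathbb{T}\} = \{\mathrm{ev}_0(\psi) : \psi \in \Sigma_\varphi\}$, where the evaluation map $\mathrm{ev}_0 : C(\mathbb{T},X) \to X$, $\psi \mapsto \psi(0)$, is continuous with respect to $\beta$; hence $\varphi(\mathbb{T})$ is the continuous image of a precompact set, so its closure is compact. For condition (ii), suppose $\varphi$ were not uniformly continuous: then there exist $\eps > 0$ and sequences $s_n, t_n \in \mathbb{T}$ with $|s_n - t_n| \to 0$ but $\rho(\varphi(s_n),\varphi(t_n)) \ge \eps$. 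Set $h_n := s_n$ and $\delta_n := t_n - s_n \to 0$. By precompactness, pass to a subsequence so that $\varphi^{h_n} \to \psi$ in $\beta$; since $\delta_n \to 0$, continuity of $\psi$ at $0$ and uniformity of the convergence on compact time-intervals force $\rho(\varphi^{h_n}(0), \varphi^{h_n}(\delta_n)) = \rho(\varphi(s_n),\varphi(t_n)) \to \rho(\psi(0),\psi(0)) = 0$, a contradiction. (One must be mildly careful that $\beta$-convergence gives uniform convergence only on bounded intervals and that the arguments $0, \delta_n$ eventually lie in a fixed bounded interval — which they do.)

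For sufficiency, assume (i) and (ii). I want to show every sequence $\{\varphi^{h_n}\}$ in $\Sigma_\varphi$ has a $\beta$-convergent subsequence. Since $\varphi$ is uniformly continuous and $\varphi(\mathbb{T})$ is precompact, the translate family $\{\varphi^{h} : h \in \mathbb{T}\}$ is (a) pointwise precompact — indeed $\varphi^h(t) = \varphi(t+h) \in \overline{\varphi(\mathbb{T})}$, which is compact — and (b) uniformly equicontinuous, because each $\varphi^h$ inherits the same modulus of continuity as $\varphi$ (translation does not affect uniform continuity). Now apply the Arzelà–Ascoli theorem on each compact interval $[-N,N]\cap\mathbb{T}$: extract a subsequence converging uniformly on $[-1,1]\cap\mathbb{T}$, refine it to converge uniformly on $[-2,2]\cap\mathbb{T}$, and so on; a diagonal subsequence then converges uniformly on every bounded subinterval of $\mathbb{T}$ to some $\psi \in C(\mathbb{T},X)$. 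Finally I would check that uniform convergence on all bounded intervals is exactly $\beta$-convergence: given $\eps>0$, choose $L$ with $L^{-1} < \eps$, and for each of the finitely many relevant scales use uniform convergence on $[-L,L]\cap\mathbb{T}$ to make the inner $\max$ small; the supremum over $L$ in the definition of $\beta$ is then controlled, giving $\beta(\varphi^{h_{n_k}},\psi) \to 0$.

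**Main obstacle.** The routine analytic content is standard Arzelà–Ascoli plus bookkeeping; the one place that needs genuine care is matching the metric $\beta$ to the topology of uniform convergence on bounded sets — in particular verifying that a $\beta$-Cauchy (or $\beta$-convergent) sequence of translates really does converge uniformly on each fixed bounded interval, and conversely. Because $\beta$ takes a supremum over all $L>0$ of a truncated quantity, one has to argue that for large $L$ the term $L^{-1}$ dominates (so it contributes nothing once $L^{-1}<\eps$) while for the finitely many small $L$ the interior maximum over $[-L,L]\cap\mathbb{T}$ is handled by ordinary uniform convergence; combining these uniformly in $L$ is the delicate step. Everything else — the evaluation-map argument for (i), the modulus-of-continuity argument for equicontinuity of translates, and the contradiction argument for necessity of (ii) — is direct.
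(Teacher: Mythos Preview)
Your approach is correct and is the standard classical argument. Note, however, that the paper does not supply its own proof of this lemma: it is stated with a citation to Sell and Sibirsky and used as a known fact, so there is no in-paper proof to compare against. Your Arzel\`a--Ascoli route (pointwise precompactness plus equicontinuity of the translate family, followed by a diagonal extraction and identification of $\beta$-convergence with uniform convergence on bounded intervals) is precisely the argument one finds in those references.

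Two small points to tighten. First, in the necessity argument for uniform continuity, when $\mathbb{T}=\mathbb{R}_{+}$ you need $\delta_n = t_n - s_n \ge 0$ for $\varphi^{h_n}(\delta_n)$ to make sense; simply relabel so that $s_n \le t_n$. Second, in the sufficiency direction your phrase ``finitely many small $L$'' is not quite right: there are uncountably many $L < 1/\eps$, but they are all handled simultaneously because $\max_{|t|\le L} \le \max_{|t|\le 1/\eps}$ for such $L$, and uniform convergence on the single interval $[-1/\eps,1/\eps]\cap\mathbb{T}$ controls all of them at once. Neither issue is a genuine gap.
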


\begin{lemma}\label{lAPF_3}\cite{Che_2023wp} Let $\varphi \in C(\mathbb T,X)$ and $\omega \in
\mathbb T$ be a positive number. The following statements are
equivalent:
\begin{enumerate}
\item
\begin{equation}\label{eqAPF_3}
\lim\limits_{t\to
+\infty}\rho(\varphi(t+\tau),\varphi(t))=0;\nonumber
\end{equation}
\item
\begin{equation}\label{eqAPF04}
\lim\limits_{t\to
+\infty}\beta(\sigma(t+\tau,\varphi),\sigma(t,\varphi))=0
.\nonumber
\end{equation}
\end{enumerate}
\end{lemma}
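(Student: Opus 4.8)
The plan is to exploit the explicit formula for the Bebutov metric $\beta$ together with uniform continuity, which on closer inspection is actually not needed in full strength; only the translation structure matters. Write $\Phi := \sigma(\cdot,\varphi)$ and note that for fixed $t$ the quantity $\beta(\sigma(t+\tau,\varphi),\sigma(t,\varphi))$ involves, for each truncation level $L>0$, the number $\max_{|s|\le L}\rho(\varphi(t+\tau+s),\varphi(t+s))$ (intersected with $L^{-1}$ and then the supremum over $L$ is taken).

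\emph{Direction (2) $\Rightarrow$ (1).} This is the easy half. Taking $L=1$ in the definition of $\beta$ gives
\begin{equation*}
\min\{\rho(\varphi(t+\tau),\varphi(t)),1\}\le \beta(\sigma(t+\tau,\varphi),\sigma(t,\varphi)),
\end{equation*}
and letting $t\to+\infty$ forces $\min\{\rho(\varphi(t+\tau),\varphi(t)),1\}\to 0$, hence $\rho(\varphi(t+\tau),\varphi(t))\to 0$ since the quantity is eventually below $1$. (One can equally take $s=0$ inside any fixed truncation level.)

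\emph{Direction (1) $\Rightarrow$ (2).} Fix $\eps>0$ and choose $L_0>0$ with $L_0^{-1}<\eps$. For the supremum defining $\beta$, the terms with $L\ge L_0$ contribute at most $L^{-1}\le L_0^{-1}<\eps$, so it suffices to control $\max_{|s|\le L}\rho(\varphi(t+\tau+s),\varphi(t+s))$ uniformly for $0<L\le L_0$, i.e. to control $\sup_{|s|\le L_0}\rho(\varphi(t+\tau+s),\varphi(t+s))$. By hypothesis (1), for the given $\eps$ there is $T>0$ with $\rho(\varphi(u+\tau),\varphi(u))<\eps$ for all $u\ge T$. Then for every $t\ge T+L_0$ and every $s$ with $|s|\le L_0$ we have $t+s\ge T$, whence $\rho(\varphi(t+\tau+s),\varphi(t+s))<\eps$; taking the max over $|s|\le L_0$ and then the supremum over all $L>0$ yields $\beta(\sigma(t+\tau,\varphi),\sigma(t,\varphi))\le\eps$ for $t\ge T+L_0$, which is (2).

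The only subtlety — and the single place one must be careful — is the role of the truncation parameter $L$ in the supremum: a bound on $\rho(\varphi(u+\tau),\varphi(u))$ valid only for large $u$ does \emph{not} by itself bound $\max_{|s|\le L}\rho(\varphi(t+\tau+s),\varphi(t+s))$ for large $L$, because the window $[t-L,t+L]$ reaches back toward the origin; this is exactly why one first discards the tail $L\ge L_0$ (cheaply, via $L^{-1}<\eps$) and only then pushes $t$ far enough to the right that the entire remaining window $[t-L_0,t+L_0]$ lies beyond $T$. No compactness or uniform-continuity assumption on $\varphi$ is required for either implication.
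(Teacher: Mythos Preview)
Your argument is correct. The paper does not actually supply a proof of this lemma; it is quoted from \cite{Che_2023wp}, so there is no in-paper proof to compare against. Your direct $\eps$--$L_0$ splitting is exactly the natural way to handle the Bebutov metric here, and you have identified the one genuine subtlety correctly: the window $[t-L,t+L]$ would reach arbitrarily far back for large $L$, so one must first cap $L$ by $L_0$ using the cheap bound $L^{-1}<\eps$ before pushing $t$ to the right.

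Two minor remarks. First, the paper allows $\mathbb T\in\{\mathbb R_+,\mathbb R\}$; your write-up uses $|s|\le L$, which is the $\mathbb T=\mathbb R$ case. For $\mathbb T=\mathbb R_+$ the range is $0\le s\le L$ and the argument only becomes easier (one needs merely $t\ge T$ rather than $t\ge T+L_0$), so nothing is lost. Second, in the $(2)\Rightarrow(1)$ direction your displayed inequality is correct but compresses a step: the $L=1$ term in the supremum is $\min\{\max_{|s|\le 1}\rho(\varphi(t+\tau+s),\varphi(t+s)),1\}$, and your bound follows after noting that $s=0$ lies in the window, as you indicate parenthetically.
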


\begin{lemma}\label{lAPF_03} Let $\varphi \in C(\mathbb T,X)$ be a Lagrange stable function. The function $\varphi$ is
remotely $\tau$-periodic if and only if its $\omega$-limit set
$\omega_{\varphi}$ consists of a $\tau$-periodic functions.
\end{lemma}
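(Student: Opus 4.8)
The plan is to deduce Lemma~\ref{lAPF_03} from the already-available dictionary between the function $\varphi$ and its orbit $\sigma(t,\varphi)$ in the Bebutov system $(C(\mathbb T,X),\mathbb T,\sigma)$, together with Theorem~\ref{th1.3.9}. First I would observe that, by Definition~\ref{defAPF02} and Lemma~\ref{lAPF02}, the Lagrange stability of $\varphi$ (precompactness of $\varphi(\mathbb T)$ plus uniform continuity) is exactly the statement that the point $\varphi$ is positively Lagrange stable in the shift dynamical system; hence Theorem~\ref{th1.3.9} applies to $x:=\varphi$ with $\tau$ the given positive period. That theorem tells us that the motion $\sigma(t,\varphi)$ is remotely $\tau$-periodic in $C(\mathbb T,X)$ if and only if every point $p\in\omega_{\varphi}$ (the $\omega$-limit set of $\varphi$ in the shift system) satisfies $\sigma(\tau,p)=p$, i.e. $p(t+\tau)=p(t)$ for all $t$, which is precisely the statement that $p\in C_{\tau}(\mathbb T,X)$ is a $\tau$-periodic function.

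Next I would close the remaining gap, namely that ``$\varphi$ is remotely $\tau$-periodic as a function'' (Definition~\ref{defAPF1}, equation~(\ref{eqAPF2})) is equivalent to ``$\sigma(t,\varphi)$ is remotely $\tau$-periodic as a motion'' (Definition~\ref{defSAP_1}, i.e. $\lim_{t\to+\infty}\beta(\sigma(t+\tau,\varphi),\sigma(t,\varphi))=0$). This is exactly the content of Lemma~\ref{lAPF_3} (taking $\omega=\tau$ there): statement (i) of that lemma is (\ref{eqAPF2}) and statement (ii) is the remote $\tau$-periodicity of the motion. Chaining these two equivalences yields: $\varphi$ remotely $\tau$-periodic $\iff$ $\sigma(t,\varphi)$ remotely $\tau$-periodic motion $\iff$ every $p\in\omega_{\varphi}$ is a $\tau$-periodic function, which is the desired conclusion.

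The one point requiring a little care is that the conclusion ``$\omega_\varphi$ consists of $\tau$-periodic functions'' presupposes that the elements of $\omega_\varphi$ are genuine members of $C_\tau(\mathbb T,X)$, and that the condition $\sigma(\tau,p)=p$ in $C(\mathbb T,X)$ really does translate pointwise to $p(t+\tau)=p(t)$; this is immediate from the definition of the shift operator $\sigma_\tau$, since two continuous functions that agree as points of $C(\mathbb T,X)$ agree at every $t$. A second, essentially bookkeeping, point is to make sure the hypotheses of Theorem~\ref{th1.3.9} and of Lemma~\ref{lAPF_3} are met simultaneously under the single hypothesis that $\varphi$ is Lagrange stable — but both need only $\varphi$ (equivalently its orbit) to be positively Lagrange stable, so there is no friction. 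I do not anticipate a genuine obstacle: the lemma is a transcription of Theorem~\ref{th1.3.9} from the abstract dynamical-systems language into the functional language via Lemma~\ref{lAPF_3}, and the proof is a two-line chain of ``if and only if''s once these identifications are spelled out.
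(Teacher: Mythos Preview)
Your proposal is correct and matches the paper's own proof exactly: the paper simply states that the lemma follows directly from Lemma~\ref{lAPF_3} and Theorem~\ref{th1.3.9}, which is precisely the two-step chain of equivalences you spell out. Your additional remarks (the translation of $\sigma(\tau,p)=p$ into pointwise $\tau$-periodicity, and the verification that Lagrange stability suffices for the hypotheses) are the natural details filling in that one-line proof.
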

\begin{proof} This statement directly follows from Lemma
\ref{lAPF_3} and Theorem \ref{th1.3.9}.
\end{proof}

\begin{coro}\label{corAPF_03} Let $\varphi \in C(\mathbb T,X)$ be a Lagrange stable function. The function $\varphi$ is
remotely stationary if and only if its $\omega$-limit set
$\omega_{\varphi}$ consists of a stationary functions.
\end{coro}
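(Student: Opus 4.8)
The plan is to deduce the statement from its $\tau$-periodic analogue, Lemma \ref{lAPF_03}, in exactly the way Corollary \ref{corSAP1} was obtained from Theorem \ref{th1.3.9}. Recall that, by Definition \ref{defAPF1}, the function $\varphi$ is remotely stationary precisely when it is remotely $\tau$-periodic for \emph{every} positive $\tau\in\mathbb T$. The $\omega$-limit set $\omega_{\varphi}$ of $\varphi$ in the shift dynamical system $(C(\mathbb T,X),\mathbb T,\sigma)$ is a property of $\varphi$ alone and does not depend on the choice of $\tau$; since $\varphi$ is Lagrange stable, Lemma \ref{lAPF_03} is applicable for each fixed $\tau>0$.

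First I would prove the ``only if'' part: if $\varphi$ is remotely stationary, then for every $\tau>0$ it is remotely $\tau$-periodic, hence by Lemma \ref{lAPF_03} each $\psi\in\omega_{\varphi}$ is $\tau$-periodic, i.e.\ $\psi(t+\tau)=\psi(t)$ for all $t\in\mathbb T$. Since this holds for all $\tau>0$ and $\psi$ is continuous, $\psi$ must be constant, i.e.\ a stationary point of $(C(\mathbb T,X),\mathbb T,\sigma)$. Conversely, if $\omega_{\varphi}$ consists of stationary (constant) functions, then every element of $\omega_{\varphi}$ is in particular $\tau$-periodic for each $\tau>0$, and Lemma \ref{lAPF_03} then yields that $\varphi$ is remotely $\tau$-periodic for every $\tau>0$, that is, remotely stationary. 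An equivalent route is to apply Corollary \ref{corSAP1} to the point $\varphi$ of the Bebutov system, using Lemma \ref{lAPF_3} to pass between $\lim_{t\to+\infty}\rho(\varphi(t+\tau),\varphi(t))=0$ and $\lim_{t\to+\infty}\beta(\sigma(t+\tau,\varphi),\sigma(t,\varphi))=0$.

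There is essentially no hard analytic step here; the only point requiring care is the interchange of the quantifiers ``for all $\tau$'' and ``for all $\psi\in\omega_{\varphi}$'', together with the elementary observation that a continuous function on $\mathbb T$ that is $\tau$-periodic for every $\tau>0$ is necessarily constant, and that such constant functions are exactly the stationary points of the shift dynamical system. Once this is in place, the corollary follows verbatim from Lemma \ref{lAPF_03}.
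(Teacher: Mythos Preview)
Your proposal is correct and follows essentially the same approach as the paper, which simply states that the corollary follows directly from Lemma~\ref{lAPF_03} and Corollary~\ref{corSAP1}. Your explicit unpacking of the ``for all $\tau$'' quantifier and the observation that a function $\tau$-periodic for every $\tau>0$ is constant are exactly the details implicit in the paper's one-line proof.
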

\begin{proof} This statement directly follows from Lemma
\ref{lAPF_03} and Corollary \ref{corSAP1}.
\end{proof}

\begin{lemma}\label{lAPF3.1} \cite{Che_2024.1} Let $\varphi \in C(\mathbb T,X)$. The
following statement are equivalent:
\begin{enumerate}
\item[(a)] the motion $\sigma(t,\varphi)$ generated by the
function $\varphi$ in the shift dynamical system $(C(\mathbb
T,X),\mathbb T,\sigma)$ is remotely almost periodic (respectively,
remotely $\tau$-periodic or remotely stationary); \item[(b)] the
function $\varphi$ is remotely almost periodic (respectively,
remotely $\tau$-periodic or remotely stationary).
\end{enumerate}
\end{lemma}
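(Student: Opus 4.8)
The plan is to work directly with the two quantities $\rho(\varphi(t+\tau),\varphi(t))$ in $X$ and $\beta\bigl(\sigma(t+\tau,\varphi),\sigma(t,\varphi)\bigr)$ in $C(\mathbb T,X)$, using $\sigma(t,\varphi)(s)=\varphi(s+t)$ and
\[
\beta(u,v)=\sup_{M>0}\min\Bigl\{\max_{|s|\le M}\rho\bigl(u(s),v(s)\bigr),\ M^{-1}\Bigr\}
\]
for $u,v\in C(\mathbb T,X)$. Unwinding Definitions~\ref{defSAP1}, \ref{defSAP2} and~\ref{defRAP1} for the point $\varphi$ of the shift system $(C(\mathbb T,X),\mathbb T,\sigma)$, statement (a) is a condition on $\beta\bigl(\sigma(t+\tau,\varphi),\sigma(t,\varphi)\bigr)$, while statement (b) (Definition~\ref{defAPF1}) is the matching condition on $\rho(\varphi(t+\tau),\varphi(t))$. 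In the \emph{remotely $\tau$-periodic} case the two conditions read $\lim_{t\to+\infty}\beta\bigl(\sigma(t+\tau,\varphi),\sigma(t,\varphi)\bigr)=0$ and $\lim_{t\to+\infty}\rho(\varphi(t+\tau),\varphi(t))=0$ respectively, and their equivalence is precisely Lemma~\ref{lAPF_3}; the \emph{remotely stationary} case is, by definition, the remotely $\tau$-periodic case for all $\tau>0$ and so follows at once. Note that no Lagrange stability of $\varphi$ is used, in contrast with Lemma~\ref{lAPF_03}.

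It thus remains to handle the \emph{remotely almost periodic} alternative, where the two conditions are not literally the same and one must reconcile the $\varepsilon$--$L$ quantifiers. For $(a)\Rightarrow(b)$ it suffices to exhibit a suitable relatively dense set for each $\varepsilon\in(0,1)$ (for $\varepsilon\ge1$ the set produced for $\varepsilon=1/2$ serves, since $1/2<\varepsilon$). Fix $\varepsilon\in(0,1)$ and let $\mathcal P(\varepsilon,\varphi)$ be the relatively dense set given by (a); for $\tau\in\mathcal P(\varepsilon,\varphi)$ pick $L=L(\varepsilon,\varphi,\tau)$ with $\beta\bigl(\sigma(t+\tau,\varphi),\sigma(t,\varphi)\bigr)<\varepsilon$ for $t\ge L$. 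The term $M=1$ in the supremum above equals $\min\bigl\{\max_{|s|\le1}\rho(\varphi(s+t+\tau),\varphi(s+t)),\,1\bigr\}$, and since $1>\varepsilon$ this forces $\max_{|s|\le1}\rho(\varphi(s+t+\tau),\varphi(s+t))<\varepsilon$, in particular $\rho(\varphi(t+\tau),\varphi(t))<\varepsilon$ for $t\ge L$. Hence the very same set and the same $L$ witness that $\varphi$ is remotely almost periodic.

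For $(b)\Rightarrow(a)$, fix $\varepsilon>0$ and let $\mathcal P(\varepsilon/2,\varphi)$ be the relatively dense set given by (b); for $\tau\in\mathcal P(\varepsilon/2,\varphi)$ choose $L_0$ with $\rho(\varphi(u+\tau),\varphi(u))<\varepsilon/2$ for all $u\ge L_0$. In $\beta\bigl(\sigma(t+\tau,\varphi),\sigma(t,\varphi)\bigr)=\sup_{M>0}\min\bigl\{\max_{|s|\le M}\rho(\varphi(s+t+\tau),\varphi(s+t)),\,M^{-1}\bigr\}$ the windows $M>2/\varepsilon$ contribute at most $M^{-1}<\varepsilon/2$, while for $M\le2/\varepsilon$ and $t\ge L_0+2/\varepsilon$ every $s$ with $|s|\le M$ satisfies $s+t\ge L_0$, so the inner maximum is $<\varepsilon/2$; therefore $\beta\bigl(\sigma(t+\tau,\varphi),\sigma(t,\varphi)\bigr)<\varepsilon$ for $t\ge L_0+2/\varepsilon$, and $\mathcal P(\varepsilon/2,\varphi)$ is the relatively dense set required by (a). (When $\mathbb T=\mathbb R_{+}$ the shift variable $s$ runs over $\mathbb R_{+}$, so $s+t\ge t$ already and one may take $L=L_0$.) The only genuine obstacle is this final bit of bookkeeping --- matching the ``uniform on compact time-windows'' behaviour of $\beta$ against a pointwise bound on $\rho$ --- and it evaporates once one notices that windows of length exceeding $2/\varepsilon$ are automatically controlled by the $M^{-1}$ truncation built into $\beta$; in the limit-zero case this is exactly what Lemma~\ref{lAPF_3} records.
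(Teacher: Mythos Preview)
Your argument is correct. The paper does not actually prove this lemma here --- it merely cites it from \cite{Che_2024.1} --- so there is no in-paper proof to compare against. What you have written is in the spirit of the surrounding material: the remotely $\tau$-periodic and remotely stationary cases reduce immediately to Lemma~\ref{lAPF_3} (itself only cited, from \cite{Che_2023wp}), and for the remotely almost periodic case you supply the natural direct estimate that Lemma~\ref{lAPF_3} encapsulates in the limit form. The one-line observation driving both directions --- that in the definition of $\beta$ the term $M^{-1}$ kills all windows longer than $2/\varepsilon$, so only a compact range of $s$ matters --- is exactly the mechanism one expects, and your bookkeeping with $\varepsilon/2$ and the shift $L_0+2/\varepsilon$ is clean. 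One cosmetic remark: in the $(b)\Rightarrow(a)$ step you obtain $\beta\le\varepsilon/2<\varepsilon$ rather than merely $<\varepsilon$, so the margin is comfortable; and your handling of the $\mathbb T=\mathbb R_+$ case (where $s\ge0$ already) is a nice touch that the cited references do not always make explicit.
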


\begin{definition}\label{defAAP1} A function $\varphi \in C(\mathbb T,\mathfrak
B)$ is said to be asymptotically stationary (respectively,
asymptotically $\tau$-periodic, asymptotically almost periodic or
positively asymptotically Poisson stable) if there exist functions
$p,r\in C(\mathbb T,\mathfrak B)$ such that
\begin{enumerate}
\item $\varphi(t)=p(t)+r(t)$ for any $t\in \mathbb T$; \item $r\in
C_{0}(\mathbb T,\mathfrak B)$ and $p$ is stationary (respectively,
$\tau$-periodic, almost periodic or positively Poisson stable
($p\in \omega_{p}$)).
\end{enumerate}
\end{definition}

\begin{lemma}\label{lAPP1} \cite[Ch.I]{Che_2009} The following statements are
equivalent:
\begin{enumerate}
\item the function $\varphi \in C(\mathbb T,X)$ is asymptotically
stationary (respectively, asymptotically $\tau$-periodic,
asymptotically almost periodic or positively asymptotically
Poisson stable); \item the motion $\sigma(t,\varphi)$ of shift
dynamical system $C(\mathbb T,\mathfrak B),\mathbb T,\sigma)$ is
asymptotically stationary (respectively, asymptotically
$\tau$-periodic, asymptotically almost periodic or positively
asymptotically Poisson stable).
\end{enumerate}
\end{lemma}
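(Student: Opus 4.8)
My plan is to reduce the assertion to two simple observations and then to treat the four recurrence types (stationary, $\tau$-periodic, almost periodic, positively Poisson stable) uniformly; throughout I write ``$\mathcal R$-function'' (resp. ``$\mathcal R$-point'') for the corresponding notion from Definitions \ref{defAPF1} and \ref{defAAP1} (resp. for the corresponding notion, in the sense of Definition \ref{defAP1}, in the shift dynamical system $(C(\mathbb T,\mathfrak B),\mathbb T,\sigma)$), and $\theta$ for the zero function.

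The first observation I would establish is the metric estimate: for every pair $\varphi,\psi\in C(\mathbb T,\mathfrak B)$,
\begin{equation*}
\lim_{t\to+\infty}|\varphi(t)-\psi(t)|=0 \iff \lim_{t\to+\infty}\beta(\sigma(t,\varphi),\sigma(t,\psi))=0 .
\end{equation*}
For ``$\Leftarrow$'' I keep only the term with $L=1$ and $s=0$ in the definition of $\beta$, which gives $\beta(\sigma(t,\varphi),\sigma(t,\psi))\ge\min\{|\varphi(t)-\psi(t)|,1\}$. For ``$\Rightarrow$'', given $\varepsilon>0$ I set $L_{0}:=2/\varepsilon$ and split the supremum defining $\beta$ at $L_{0}$: the terms with $L>L_{0}$ are bounded by $L^{-1}<\varepsilon/2$, and the terms with $L\le L_{0}$ are bounded by $\max\{|\varphi(t+s)-\psi(t+s)|:\ s\in\mathbb T,\ |s|\le L_{0}\}$, which tends to $0$ as $t\to+\infty$ because the argument set $\{t+s:\ s\in\mathbb T,\ |s|\le L_{0}\}$ escapes to $+\infty$ while $|\varphi(u)-\psi(u)|\to0$. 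Taking $\psi=\theta$, this says in particular that $r\in C_{0}(\mathbb T,\mathfrak B)$ if and only if $\beta(\sigma(t,r),\theta)\to0$ as $t\to+\infty$.

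The second observation I would record is the base equivalence: for each of the four types, $p$ is an $\mathcal R$-function if and only if $\sigma(\cdot,p)$ is an $\mathcal R$-point of the shift system. For ``stationary'' both mean that $p$ is constant; for ``$\tau$-periodic'' both mean $p\in C_{\tau}(\mathbb T,\mathfrak B)$, i.e. $\sigma(\tau,p)=p$; for ``positively Poisson stable'' both mean $p\in\omega_{p}$ in the shift system (which is precisely the phrasing used in Definition \ref{defAAP1}); and for ``almost periodic'' it is the classical correspondence between Bohr almost periodic functions and almost periodic points of Bebutov's dynamical system, which I would cite from \cite[Ch.I]{Che_2009} (see also \cite[Ch.I]{Che_2020}). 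Granting these, the proof is short. For $(1)\Rightarrow(2)$: write $\varphi=p+r$ with $p$ an $\mathcal R$-function and $r\in C_{0}(\mathbb T,\mathfrak B)$, as in Definition \ref{defAAP1}; by the first observation applied to $\varphi$ and $p$ (their difference being $r\in C_{0}$) we get $\beta(\sigma(t,\varphi),\sigma(t,p))\to0$, while by the second observation $\sigma(\cdot,p)$ is an $\mathcal R$-point; hence $\sigma(\cdot,\varphi)$ is asymptotically $\mathcal R$ in the sense of Definition \ref{defAP1}, which is (2). For $(2)\Rightarrow(1)$: Definition \ref{defAP1} furnishes an $\mathcal R$-point $q\in C(\mathbb T,\mathfrak B)$ with $\beta(\sigma(t,\varphi),\sigma(t,q))\to0$; by the first observation $r:=\varphi-q\in C_{0}(\mathbb T,\mathfrak B)$, by the second observation $p:=q$ is an $\mathcal R$-function, and $\varphi=p+r$ is the decomposition required by Definition \ref{defAAP1}.

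The only point I expect to be not completely routine is the direction ``$\Rightarrow$'' of the metric estimate when $\mathbb T=\mathbb R$: there the uniform distance $\sup_{s\in\mathbb R}|\varphi(t+s)-\psi(t+s)|$ is independent of $t$ and in general does not decay, so one genuinely has to use the local--uniform character of $\beta$ (the truncation at level $L^{-1}$) together with the escape of $\{t+s:\ |s|\le L_{0}\}$ to $+\infty$; for $\mathbb T=\mathbb R_{+}$ this difficulty disappears, since then $\sup_{s\ge0}|\varphi(t+s)-\psi(t+s)|=\sup_{u\ge t}|\varphi(u)-\psi(u)|\to0$ already for the uniform distance. All the remaining steps are bookkeeping.
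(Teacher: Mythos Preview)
Your argument is correct. The paper does not supply its own proof of this lemma: it is stated with a bare citation to \cite[Ch.I]{Che_2009}, so there is nothing in the present text to compare against. That said, your first observation is precisely the content (in slightly more general form) of Lemma~\ref{lAPF_3}, which the paper also quotes without proof; so your approach is fully in line with the surrounding machinery, and you have effectively supplied the missing details. The only cosmetic remark is that the statement mixes $C(\mathbb T,X)$ and $C(\mathbb T,\mathfrak B)$; your proof correctly works in $C(\mathbb T,\mathfrak B)$, since the additive decomposition $\varphi=p+r$ of Definition~\ref{defAAP1} requires a linear structure.
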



Let $\varphi \in C(\mathbb R,X)$. Denote by $\mathfrak
L_{\varphi}^{\pm \infty}:=\{\{t_n\}|$ such that $t_n\to \pm\infty$
and $\{\varphi^{t_n}\}$ converges in $C(\mathbb R,X)$ as $n\to
\infty\}$ and $\mathfrak L_{\varphi}:=\mathfrak
L_{\varphi}^{-\infty}\bigcup \mathfrak L_{\varphi}^{+\infty}$.

\begin{definition}\label{defCF1} A function $\varphi \in C(\mathbb
R,X)$ is said to be
\begin{enumerate}
\item positively (respectively, negatively) remotely comparable
(comparable at the infinity) by character of recurrence with the
given function $\psi \in C(\mathbb R,Y)$ if $\mathfrak
L_{\psi}^{+\infty}\subseteq \mathfrak L_{\varphi}^{+\infty}$
(respectively, $\mathfrak L_{\psi}^{+\infty}\subseteq \mathfrak
L_{\varphi}^{+\infty}$); \item  remotely comparable (comparable at
the infinity) by character of recurrence with the given function
$\psi \in C(\mathbb R,Y)$ if $\mathfrak L_{\psi}\subseteq
\mathfrak L_{\varphi}$.
\end{enumerate}
\end{definition}

\begin{theorem}\label{thRAP4.F0} Let $\psi \in C(\mathbb T,Y)$ be an asymptotically stationary
(respectively, asymptotically $\tau$-periodic or asymptotically
almost periodic) function. If the function $\varphi \in C(\mathbb
T,X)$ is positively remotely comparable by the character of
recurrence with the function $\psi$, then the function $\varphi$
is also asymptotically stationary (respectively, asymptotically
$\tau$-periodic or asymptotically almost periodic).
\end{theorem}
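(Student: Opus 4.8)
The plan is to reduce the statement about functions $\varphi,\psi \in C(\mathbb T,\cdot)$ to the corresponding statement about points in shift dynamical systems, which is already available as Theorem~\ref{thRAP4.0}. First I would recall that a function $\psi$ is asymptotically stationary (respectively, asymptotically $\tau$-periodic or asymptotically almost periodic) precisely when the motion $\sigma(t,\psi)$ it generates in the Bebutov system $(C(\mathbb T,Y),\mathbb T,\sigma)$ has the same property; this is exactly Lemma~\ref{lAPP1}. Likewise, by the same lemma, to prove that $\varphi$ is asymptotically stationary (respectively, asymptotically $\tau$-periodic or asymptotically almost periodic) it suffices to prove that the motion $\sigma(t,\varphi)$ in $(C(\mathbb T,X),\mathbb T,\sigma)$ has that property.

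Next I would translate the comparability hypothesis. By definition of $\mathfrak L_{\psi}^{+\infty}$ (sequences $t_n\to +\infty$ with $\{\psi^{t_n}\}=\{\sigma(t_n,\psi)\}$ convergent in $C(\mathbb T,Y)$) and of $\mathfrak L_{x}^{+\infty}$ for a point $x$ in a dynamical system, the condition that $\varphi$ is positively remotely comparable by the character of recurrence with $\psi$ (Definition~\ref{defCF1}, i.e. $\mathfrak L_{\psi}^{+\infty}\subseteq \mathfrak L_{\varphi}^{+\infty}$) is literally the statement that the point $\varphi\in C(\mathbb T,X)$ is positively remotely comparable by the character of recurrence with the point $\psi\in C(\mathbb T,Y)$ in the sense of Definition~\ref{defRAP01}, applied to the pair of shift dynamical systems $(C(\mathbb T,X),\mathbb T,\sigma)$ and $(C(\mathbb T,Y),\mathbb T,\sigma)$.

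With both hypothesis and conclusion thus rephrased at the level of shift dynamical systems, I would simply invoke Theorem~\ref{thRAP4.0} with $X$ replaced by $C(\mathbb T,X)$, $Y$ replaced by $C(\mathbb T,Y)$, $x$ replaced by $\varphi$, and $y$ replaced by $\psi$: since $\psi$ (as a point of the shift system) is asymptotically stationary (respectively, asymptotically $\tau$-periodic or asymptotically almost periodic) and $\varphi$ is positively remotely comparable by the character of recurrence with $\psi$, the theorem gives that $\varphi$ (as a point of the shift system) has the same property, and then Lemma~\ref{lAPP1} transfers this back to $\varphi$ as a function. I expect the only genuinely delicate point to be the bookkeeping that the two notions ``comparable point'' and ``comparable function'' coincide after the identification $x\leftrightarrow\varphi$, which amounts to checking that the $\omega$-limit-type index sets $\mathfrak L$ defined for functions in $C(\mathbb R,X)$ match those defined for points of the associated Bebutov system — a matter of unwinding definitions rather than of any real estimate; since the topology on $C(\mathbb T,X)$ is exactly the one making $\sigma$ a dynamical system (Corollary~\ref{corAPF1}), this identification is routine.
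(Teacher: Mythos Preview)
Your proposal is correct and follows essentially the same approach as the paper: the paper simply states that Theorem~\ref{thRAP4.F0} directly follows from Theorem~\ref{thRAP4.0}, which is exactly the reduction to shift dynamical systems you spell out via Lemma~\ref{lAPP1} and the identification of the $\mathfrak L^{+\infty}$ sets. Your version is more explicit about the bookkeeping, but the underlying argument is identical.
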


\begin{theorem}\label{thRAPF4} Let $\psi \in C(\mathbb T,Y)$ be a Lagrange stable and remotely stationary
(respectively, remotely $\tau$-periodic or remotely almost
periodic) function. If the function $\varphi \in C(\mathbb T,X)$
is positively remotely comparable by the character of recurrence
with the function $\psi$, then the function $\varphi$ is also
remotely stationary (respectively, remotely $\tau$-periodic or
remotely almost periodic).
\end{theorem}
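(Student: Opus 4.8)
The plan is to reduce Theorem \ref{thRAPF4} to the already-established transfer principle for points, namely Theorem \ref{thRAP4}, applied in the shift dynamical systems $(C(\mathbb T,X),\mathbb T,\sigma)$ and $(C(\mathbb T,Y),\mathbb T,\sigma)$. First I would observe that by Lemma \ref{lAPF3.1} the function $\psi$ is remotely stationary (respectively, remotely $\tau$-periodic or remotely almost periodic) if and only if the point $\psi\in C(\mathbb T,Y)$ is so as a point of the shift system; and by Lemma \ref{lAPF02} the Lagrange stability of $\psi$ as a function is exactly the positive Lagrange stability of the point $\psi$ in the shift system. Likewise, the desired conclusion about $\varphi$ is, via Lemma \ref{lAPF3.1} again, equivalent to $\varphi$ being a remotely stationary / $\tau$-periodic / almost periodic \emph{point} of $(C(\mathbb T,X),\mathbb T,\sigma)$.

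Next I would translate the comparability hypothesis. By definition $\mathfrak L_{\psi}^{+\infty}$ (for the function $\psi$, Definition \ref{defCF1}) consists of sequences $t_n\to+\infty$ with $\{\psi^{t_n}\}$ convergent in $C(\mathbb T,Y)$, i.e.\ with $\{\sigma(t_n,\psi)\}$ convergent; this is precisely $\mathfrak L_{\psi}^{+\infty}$ in the sense of Definition \ref{defRAP01} for the point $\psi$ in the shift system. Hence ``$\varphi$ is positively remotely comparable by the character of recurrence with $\psi$'' as functions is literally the statement $\mathfrak L_{\psi}^{+\infty}\subseteq\mathfrak L_{\varphi}^{+\infty}$ for the corresponding points, i.e.\ positive remote comparability of the points. (Since the conclusions sought — remote stationarity, remote $\tau$-periodicity, remote almost periodicity — all concern the behaviour as $t\to+\infty$, the one-sided, positive version of comparability is what is needed, and Theorem \ref{thRAP4}, although stated for two-sided comparability, has a proof that uses only $\mathfrak L_{\psi}^{+\infty}\subseteq\mathfrak L_{\varphi}^{+\infty}$; alternatively one may invoke the positive version directly.)

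With these identifications in place, the result follows by applying Theorem \ref{thRAP4} with $X$ replaced by $C(\mathbb T,X)$, $Y$ by $C(\mathbb T,Y)$, $x$ by $\varphi$, $y$ by $\psi$, and $\pi$, $\sigma$ both the shift $\sigma$: the point $\psi$ is Lagrange stable and remotely stationary / $\tau$-periodic / almost periodic, $\varphi$ is remotely comparable with $\psi$, hence $\varphi$ is remotely stationary / $\tau$-periodic / almost periodic as a point of the shift system, and therefore, again by Lemma \ref{lAPF3.1}, as a function. I expect the only genuine subtlety — the ``hard part'' — to be the bookkeeping in the previous paragraph: checking that the $\mathfrak L^{+\infty}$ sets defined for functions in Definition \ref{defCF1} coincide with those defined for points in Definition \ref{defRAP01} when the point is the function itself inside the shift system, and confirming that the proof of Theorem \ref{thRAP4} genuinely only needs the positive ($+\infty$) inclusion (which it does, since remote recurrence properties are asymptotic conditions as $t\to+\infty$). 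Everything else is a direct citation.
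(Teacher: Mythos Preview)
Your proposal is correct and follows the same route as the paper, which simply asserts that Theorem~\ref{thRAPF4} ``directly follows from Theorem~\ref{thRAP4}''; your use of Lemmas~\ref{lAPF3.1} and~\ref{lAPF02} to identify function-level notions with point-level notions in the shift systems is exactly the natural way to make that one-line reduction precise. Your observation about the mismatch between the hypothesis of \emph{positive} remote comparability here and the two-sided comparability in Theorem~\ref{thRAP4} is apt --- the remote recurrence notions in this paper are all conditions as $t\to+\infty$, so only the inclusion $\mathfrak L_{\psi}^{+\infty}\subseteq\mathfrak L_{\varphi}^{+\infty}$ is actually used.
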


Theorem \ref{thRAP4.F0} (respectively, Theorem \ref{thRAPF4})
directly follows from Theorem \ref{thRAP4.0} (respectively,
Theorem \ref{thRAP4}).

Denote by $C(\mathbb T\times \mathfrak B, \mathfrak B)$ the space
of all continuous functions $f:\mathbb T \times \mathfrak B \to
\mathfrak B$ equipped with the compact-open topology.

Denote by $(C(\mathbb T\times \mathfrak B,\mathfrak B), \mathbb
T,\sigma)$ the shift dynamical system on the space $C(\mathbb
T\times \mathfrak B,\mathfrak B)$ (see, for example,
\cite[Ch.I]{Che_2015}), i.e., $\sigma(h,f):=f^{h}$ and
$f^{h}(t,x):=f(t+h,x)$ for any $(t,x)\in \mathbb T\times \mathfrak
B$.

\begin{definition}\label{defD1} A function $f\in C(\mathbb T\times \mathfrak B,\mathfrak B)$
is said to be remotely $\tau$-periodic (respectively, positively
Lagrange stable and so on) in $t\in \mathbb T$ uniformly with
respect to $x$ on every compact subset from $\mathfrak B$ if the
motion $\sigma(t,f)$ (defined by function $f$) of the dynamical
system $(C(\mathbb T\times \mathfrak B,\mathfrak B),\mathbb
T,\sigma)$ is remotely $\tau$-periodic (respectively, positively
Lagrange stable and so on).
\end{definition}

\section{Hyperbolic Linear Nonautonomous Dynamical Systems}\label{Sec3}


Let $(\mathfrak B, |\cdot |)$ be a Banach space with the norm
$|\cdot|$, $\mathbb T\in \{\mathbb R_{+},\mathbb R\}$ and $\langle
\mathfrak B, \varphi, (Y,\mathbb R, \sigma)\rangle$ (or shortly
$\varphi$) be a linear cocycle over dynamical system $(Y,\mathbb
R,\sigma)$ with the fibre $\mathfrak B$, i.e., $\varphi$ is a
continuous mapping from $\mathbb T\times \mathfrak B \times Y$
into $\mathfrak B$ satisfying the following conditions:
\begin{enumerate}
\item $\varphi(0,u,y)=u$ for any $u\in\mathfrak B$ and $y\in Y$;
\item
$\varphi(t+\tau,u,y)=\varphi(t,\varphi(\tau,u,y),\sigma(\tau,y))$
for any $t,\tau\in\mathbb T$, $u\in \mathfrak B$ and $y\in Y$;
\item for any $(t,y)\in \mathbb T\times Y$ the mapping
$\varphi(t,\cdot,y):\mathfrak B\mapsto \mathfrak B$ is linear.
\end{enumerate}

Denote by $[\mathfrak B]$ the Banach space of any linear bounded
operators $A$ acting on the space $\mathfrak B$ equipped with the
operator norm $||A||:=\sup\limits_{|x|\le1}|Ax|$\index{$||A||$}.

\begin{example}\label{exHom1}  Let $Y$ be a complete metric space,
$(Y,\mathbb R,\sigma)$ be a dynamical system on $Y$ and $
[\mathfrak B]$ be the Banach space of linear bounded operators
acting on Banach space $ \mathfrak B $ and $f\in C(Y,\mathfrak
B)$. Consider the following linear (respectively, linear
nonhomogeneous) differential equation
\begin{equation}\label{eqLDE1}
x'=A(\sigma(t,y))x,
\end{equation}
where $A\in C(Y,[\mathfrak B])$. Note that the following
conditions are fulfilled (see, for example,
\cite[Ch.III]{DK_1970}) for equation (\ref{eqLDE1}):
\begin{enumerate}
\item[a.] for any $ v \in \mathfrak B $ and $ y\in Y $ the
equation (\ref{eqLDE1}) has exactly one solution $ \varphi
(t,v,y)$ with the condition $ \varphi (0, v, y ) = v$ is
fulfilled; \item
$\varphi(t+\tau,v,y)=\varphi(t,\varphi(\tau,v,y),\sigma(\tau,y))$
for any $(t,\tau)\in \mathbb T\times \mathbb R$ and $(v,y)\in
\mathfrak B\times Y$; \item[c.] the mapping $ \varphi : (t,u,y)
\to \varphi (t,u,y) $ is continuous in the topology of $\mathbb T
\times \mathfrak B \times Y$; \item $\varphi(t,\alpha u+\beta v,
y)=\alpha\varphi(t,u,y)+\beta \varphi(t,v,y)$ for any $t\in
\mathbb T$, $u,v\in \mathbb B$ and $\alpha,\beta \in P$.
\end{enumerate}

The equation (\ref{eqLDE1}) generates a linear cocycle $\langle
\mathfrak B, \varphi, (Y,\mathbb R, \sigma)\rangle$ over dynamical
system $(Y,\mathbb R,\sigma)$ with the fiber $\mathfrak B$.
\end{example}

\begin{example}\label{exHom2}{\rm
Consider a linear nonhomogeneous differential equation
\begin{equation}\label{eqLDE2}
x'=A(t)x,
\end{equation}
where $A\in C(\mathbb R,[\mathfrak B])$. Along this equation
(\ref{eqLDE2}) consider its $H$-class, i.e., the following family
of equations
\begin{equation}\label{eqLDE3}
x'=B(t)x,
\end{equation} where $B\in
H(A):=\overline{\{A^{h}|\ h\in \mathbb R\}}$, by bar the closure
in the space $C(\mathbb R,[\mathfrak B])$ is denoted and
$A^{h}(t):=A(t+h)$ for any $t\in \mathbb R$. Note that the
following conditions are fulfilled (see, for example,
\cite[Ch.III]{DK_1970}) for the equation (\ref{eqLDE2}) and its
$H$-class (\ref{eqLDE3}):
\begin{enumerate}
\item[a.] for any $ u \in \mathfrak B $ and $ B\in H(A)$ the
equation (\ref{eqLDE3}) has exactly one solution $ \varphi
(t,u,B)$ and the condition $ \varphi (0, u, B ) = v$ is fulfilled;
\item[b.]
$\varphi(t+\tau,v,B)=\varphi(t,\varphi(\tau,v,B),B^{\tau})$ for
any $t,\tau \in \mathbb T$ and $(v,B)\in \mathfrak B\times H(A)$;
\item[c.] the mapping $ \varphi : (t,u,B ) \to \varphi (t,u,B ) $
is continuous in the topology of $\mathbb R_{+} \times \mathfrak B
\times H(A)$; \item[d.] $\varphi(t,\alpha u+\beta v,
B)=\alpha\varphi(t,u,B)+\beta \varphi(t,v,B)$ for any $t\in
\mathbb T$, $u,v\in \mathbb B$, $B\in H(A)$ and $\alpha,\beta \in
P$.
\end{enumerate}

Thus, every linear differential equation of form (\ref{eqLDE2})
(and its $H$-class (\ref{eqLDE3})) generates a linear cocycle
$\langle \mathfrak B, \psi, (H(A,f),\mathbb T,\sigma)\rangle$ over
dynamical system $(H(A,f),\mathbb R,\sigma)$ with the fibre
$\mathfrak B$.}
\end{example}


Let $\mathbb T \in \{\mathbb R_{+},\mathbb R\}$, $(Y,\mathbb
R,\sigma)$ be a dynamical system on $Y$ and $\langle \mathfrak
B,\varphi, (Y,\mathbb R,\sigma)\rangle$ be a cocycle over
$(Y,\mathbb R,\sigma)$ with the fibre $\mathfrak B$.

\begin{definition}\label{defED01}
A linear cocycle $\langle \mathfrak B, \varphi, (Y,\mathbb R,
\sigma)\rangle$ is \emph{hyperbolic} (or equivalently, satisfies
the condition of \emph{exponential dichotomy}) if there exists a
continuous projection valued function $P:Y \to [\mathfrak B]$
satisfying:
\begin{enumerate}
\item $P(\sigma(t,y))U(t,y)=U(t,y)P(y)$ for any $(t,y)\in \mathbb
T\times Y$: \item there exist constants $\nu >0$ and $\mathcal
N>0$ such that
$$
\Vert U(t,y)P(y)U^{-1}(\tau,y)\Vert \le \mathcal N e^{-\nu
(t-\tau)}\ \ \forall\ t\ge \tau
$$
and
$$
\Vert U(t,y)Q(y)U^{-1}(\tau,y)\Vert \le \mathcal N e^{\nu
(t-\tau)}\ \ \forall \ t\le \tau
$$
and for any $y\in Y$, where $U(t,y):=\varphi(t,\cdot,y)$ and
$Q(y):=Id_{\mathfrak B}-P(y)$ for any $y\in Y$ and $t\in \mathbb
T$, where $Id_{\mathfrak B}$ is the identity operator on
$\mathfrak B$.
\end{enumerate}
\end{definition}

A \emph{Green's function} $G_{y}(t,\tau)$ for hyperbolic cocycle
$\varphi$ is defined by
\begin{equation}
 G_{y}(t,\tau):=\left\{\begin{array}{ll}
&\!\! U(t,y)P(y)U^{-1}(\tau,y),\;\mbox{if}\; t>\tau \ \mbox{and}\ y\in Y \\[2mm]
&\!\! -U(t,y)Q(y)U^{-1}(\tau,y), \;\mbox{if}\; t<\tau \
\mbox{and}\ y\in Y .
\end{array}
\right.\nonumber
\end{equation}

Note some properties of the Green's function that directly follow
from its definition and properties of the Cauchy operator
$U(t,y)$:
\begin{enumerate}
\item the mapping $G: Y\times (\mathbb R\setminus \{0\})\to
[\mathfrak B]$ ($G(\tau,y):=G_{y}(0,\tau)$) is continuous; \item
$G(+0,y)-G(-0,y)=Id_{\mathfrak B}$\ for any $y\in Y$, where
$Id_{\mathfrak B}$\ is the unit operator acting on he space
$\mathfrak B$; \item $G_{y}(t+h,\tau)=G_{\sigma(h,y)}(t,\tau-h)$
for any $h\in \mathbb R$ and $(t,y)\in (\mathbb R\setminus
\{0\})\times Y$.
\end{enumerate}

Let $A\in C(\mathbb T,[\mathfrak B])$, $\Sigma_{A}:=\{A_{\tau}:\
A^{\tau}(t):=A(t+\tau)$ for any $t\in \mathbb T\}$ and
$H(A):=\overline{\Sigma}_{A}$\index{$H(A)$}, where by bar is
denoted the closure of set $\Sigma_{A}$ in $C(\mathbb T,[\mathfrak
B])$. If $\mathbb T =\mathbb R_{+}$, then we denote by $H^{+}(A)$
the set $H(A)$.

Consider a linear differential equation
\begin{equation}\label{eqUC3}
x'=A(t)x \ \ (A\in C(\mathbb R,[\mathfrak B])).
\end{equation}
Along with the equation (\ref{eqUC3}) we consider its
$H^{+}$-class, that is, the family of the equations
\begin{equation}\label{eqUC4}
x'=B(t)x \ \ (B\in H^{+}(A)).\nonumber
\end{equation}

\begin{definition}\label{def3.2*}
Recall \cite[Ch.I]{DK_1970} that a linear bounded operator $P:
\mathfrak B\to \mathfrak B$ is called a projection, if $P^2=P,$
where $P^2:=P\circ P$.
\end{definition}

\begin{definition}\label{defED1}
Let $U(t,A)$ be the operator of Cauchy (a solution operator) of
linear equation $(\ref{eqUC3})$. Following \cite[Ch.IV]{DK_1970}
we will say that equation (\ref{eqUC3}) has an exponential
dichotomy (is hyperbolic) on $I\subseteq \mathbb R$, if there
exists a projection $P(A)\in [\mathfrak B]$ and constants
$\mathcal N\ge 1, \nu >0$ such that the following conditions hold:
\begin{enumerate}
\item
\begin{equation}\label{eqED1} \Vert
U(t,A)P(A)U^{-1}(\tau,A)\Vert \le \mathcal N e ^{-\nu (t-\tau)}
(\forall t> \tau;\ t,\tau \in I)\nonumber
\end{equation}
and
\item
\begin{equation}\label{eqED2}
\Vert U(t,A)Q(A)U^{-1}(\tau,A)\Vert \le \mathcal Ne^{\nu (t-\tau)}
(\forall t < \tau :\ t,\tau \in I),\nonumber
\end{equation}
where $Q(A):=Id_{\mathfrak B}-P(A)$.
\end{enumerate}
\end{definition}

\begin{remark}\label{remDE1} Assume that $P\in [\mathfrak B]$ is a
projector and $U(t,A)$ is the Cauchy operator of the equation
$(\ref{eqUC3})$ then:
\begin{enumerate}
\item the operator $P(t):=U(t,A)PU^{-1}(t,A)\in [\mathfrak B]$ is
a projector for any $t\in I$; \item $U(t,A)P=P(t)U(t,A)$ for any
$t\in I$.
\end{enumerate}
\end{remark}

Along with classical definition of hyperbolicity (Definition
\ref{defED1}) we will use an other definition given below. And in
this Subsection we establish the relation between two definitions
of hyperbolicity for linear homogeneous differential equations
with continuous (bounded) coefficients.

\begin{definition}\label{defED2} A differential equation
(\ref{eqUC3}) is said to be \emph{hyperbolic (satisfies the
condition of exponential dichotomy)} if there are two projections:
$P,Q:H^{+}(A)\mapsto [\mathfrak B]$ ($P^2(B)=P(B)$ and
$Q^2(B)=Q(B)$ for any $B\in H^{+}(A)$) such that
\begin{enumerate}
\item the mappings $P$ and $Q$ are continuous; \item
$P(B)+Q(B)=Id_{\mathfrak B}$ for any $B\in H^{+}(A)$; \item
$U(t,B)P(B)=P(B^{t})U(t,B)$ for any $t\in\mathbb R_{+}$ and $B\in
H^{+}(A)$, where $U(t,B):=\varphi(t,\cdot,B)$; \item there are
numbers $\mathcal N\ge 1$ and $\nu >0$ such that
$||U(t,B)P(B)||\le \mathcal Ne^{-\nu t}$ for any $t\ge 0$ and
$||U(t,B)Q(B)||\le \mathcal Ne^{\nu t}$ for any $t\le 0$.
\end{enumerate}
\end{definition}

\begin{remark}\label{remUC2} Note that the definition above of
hyperbolicity on $\mathbb R_{+}$ means that the cocycle $\langle
\mathfrak B,\varphi,(H^{+}(A),\mathbb R_{+},\sigma)\rangle$
generated by the equation (\ref{eqUC3}) is hyperbolic in the sense
of Definition \ref{defED01}.
\end{remark}

\begin{remark}\label{remDP1} 1. It is clear that from Definition
\ref{defED2} it follows Definition \ref{defED1}.

2. Below we will show that for the equation (\ref{eqUC3}) with
bounded coefficients (i.e., there exists a positive constant $a$
such that $||A(t)||\le a$ for any $t\in \mathbb R$) under
Condition (\textbf{C}) Definitions \ref{defED2} and \ref{defED1}
are equivalent.
\end{remark}

\begin{lemma}\label{lDP01} \cite{Che_2024.2} Let $A\in C(\mathbb R_{+},[\mathfrak B])$.
If the equation (\ref{eqUC3}) is hyperbolic in the sense of
Definition \ref{defED2}, then it is also so in the classical
sense.
\end{lemma}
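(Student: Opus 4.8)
The plan is to unpack Definition \ref{defED2} and show it yields the two projection-valued operator estimates required by Definition \ref{defED1}. The candidate projection for the classical definition is simply the value of the continuous family $P$ at the base point $A$ itself, i.e. $P(A)\in[\mathfrak B]$; since $P^2(A)=P(A)$ by hypothesis, this is a genuine projection, and $Q(A)=Id_{\mathfrak B}-P(A)$. It remains to deduce the exponential bounds
$$
\Vert U(t,A)P(A)U^{-1}(\tau,A)\Vert \le \mathcal N e^{-\nu(t-\tau)} \quad (t>\tau), \qquad
\Vert U(t,A)Q(A)U^{-1}(\tau,A)\Vert \le \mathcal N e^{\nu(t-\tau)} \quad (t<\tau),
$$
from the single-point bounds $\Vert U(t,A)P(A)\Vert\le\mathcal N e^{-\nu t}$ ($t\ge0$) and $\Vert U(t,A)Q(A)\Vert\le\mathcal N e^{\nu t}$ ($t\le0$) available in Definition \ref{defED2}.

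The key algebraic device is the cocycle identity together with the intertwining relation. First I would observe that the cocycle property applied to the base point gives $U(t,A)=U(t-\tau,A^{\tau})U(\tau,A)$ for all $t,\tau\ge 0$ (on $\mathbb R_{+}$ the relevant semigroup relation), so that $U(t,A)U^{-1}(\tau,A)=U(t-\tau,A^{\tau})$ whenever $U(\tau,A)$ is invertible. Next, the intertwining condition $U(t,B)P(B)=P(B^{t})U(t,B)$ applied with $B=A$ and time $\tau$ gives $U(\tau,A)P(A)=P(A^{\tau})U(\tau,A)$, hence $P(A)U^{-1}(\tau,A)=U^{-1}(\tau,A)P(A^{\tau})$. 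Combining these,
$$
U(t,A)P(A)U^{-1}(\tau,A) = U(t,A)U^{-1}(\tau,A)P(A^{\tau}) = U(t-\tau,A^{\tau})P(A^{\tau}),
$$
and since $t-\tau\ge 0$ when $t>\tau$, the Definition \ref{defED2} bound applied at the shifted point $A^{\tau}\in H^{+}(A)$ gives $\Vert U(t-\tau,A^{\tau})P(A^{\tau})\Vert\le\mathcal N e^{-\nu(t-\tau)}$, which is exactly the first classical estimate. The estimate for $Q$ is obtained by the mirror-image computation: from $Q(B)=Id_{\mathfrak B}-P(B)$ one gets the analogous intertwining $U(t,B)Q(B)=Q(B^{t})U(t,B)$, whence $U(t,A)Q(A)U^{-1}(\tau,A)=U(t-\tau,A^{\tau})Q(A^{\tau})$, and for $t<\tau$ we have $t-\tau\le 0$, so the second bound in Definition \ref{defED2} yields $\le\mathcal N e^{\nu(t-\tau)}$.

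The one genuine point requiring care — and what I expect to be the main obstacle — is the invertibility of the Cauchy operator $U(\tau,A)$ on $\mathfrak B$, which is implicitly needed to even write $U^{-1}(\tau,A)$ in the classical estimates. On a general Banach space the solution operator of $x'=A(t)x$ with bounded $A$ is invertible (its inverse is the Cauchy operator of the adjoint/backward equation $y'=-A(t)y$, or equivalently $U^{-1}(\tau,A)=U(-\tau,A^{\tau})$ interpreted via the two-sided flow when $\mathbb T=\mathbb R$); on $\mathbb R_{+}$ one must note that although $U(t,A)$ is defined only for $t\ge 0$, its restriction to the stable subspace $R(P(A))$ and the complementary behavior on $R(Q(A))$ still make the expressions $U(t,A)P(A)U^{-1}(\tau,A)$ meaningful, since $U^{-1}(\tau,A)$ exists as the solution operator run backward for bounded $A$. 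I would therefore begin the proof by recalling (citing \cite[Ch.III]{DK_1970}) that for $A$ with bounded coefficients the Cauchy operator $U(t,A)$ is a bounded invertible operator for every $t$, fix this once, and then the chain of identities above completes the argument in a few lines. The remaining verifications — that $P(A)$ is a projection and that $Q(A)$ intertwines the flow — are immediate consequences of the corresponding properties assumed for the whole family in Definition \ref{defED2}.
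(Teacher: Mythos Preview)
Your argument is correct and is precisely the standard unpacking of Definition \ref{defED2}: the cocycle identity $U(t,A)U^{-1}(\tau,A)=U(t-\tau,A^{\tau})$ together with the intertwining relation reduces both classical estimates to the single-point bounds at the shifted base $A^{\tau}\in H^{+}(A)$. The paper does not prove this lemma here (it cites \cite{Che_2024.2}, and Remark \ref{remDP1} simply calls this direction ``clear''), so your sketch is exactly what one would supply; the invertibility of $U(\tau,A)$ for bounded $A$ is indeed routine from \cite[Ch.III]{DK_1970} and need not be belabored.
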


Condition (\textbf{C}). The family of projectors
$\{P(t)\}=\{U(t,A)PU^{-1}(t,A)\}_{t\in \mathbb R}$ is a precompact
set of the Banach space $[\mathfrak B]$.

\begin{remark}\label{remED2} If the projector $P$ is finite-dimensional
(i.e., $P(\mathfrak B)$ is a finite-dimensional subspace of the
Banach space $\mathfrak B$) and there exists a positive constant
$C>0$ such that $\|U(t,A)PU^{-1}(t,A)\|\le C$ for any $t\in
\mathbb R$, then the family of operators
$\{U(t,A)PU^{-1}(t,A)\}_{t\in \mathbb R}$ is a precompact subset
of $[\mathfrak B]$.
\end{remark}

\begin{lemma}\label{lDP2} \cite{Che_2024.2} Assume that Condition (\textbf{C}) holds.
Then if the equation (\ref{eqUC3}) is hyperbolic (in the sense of
Definition \ref{defED1}), then it is also hyperbolic in the sense
of Definition \ref{defED2}.
\end{lemma}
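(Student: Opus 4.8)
The plan is to start from the classical exponential dichotomy on the line (Definition \ref{defED1}), i.e.\ a single projector $P=P(A)\in[\mathfrak B]$ and constants $\mathcal N\ge 1,\ \nu>0$ with the two exponential estimates, together with Condition (\textbf{C}) asserting precompactness of the family $\{P(t)\}=\{U(t,A)PU^{-1}(t,A)\}_{t\in\mathbb R}$ in $[\mathfrak B]$. I want to manufacture from this a continuous projector-valued map $P:H^{+}(A)\to[\mathfrak B]$ (and $Q:=Id_{\mathfrak B}-P$) realizing Definition \ref{defED2}. First I would fix notation: for $B\in H^{+}(A)$ there is a net (or, since these spaces are metric, a sequence) $t_k\to+\infty$ with $A^{t_k}\to B$ in $C(\mathbb R_+,[\mathfrak B])$; by Condition (\textbf{C}) the corresponding sequence $\{P(t_k)\}$ has a convergent subsequence in $[\mathfrak B]$, and I would define $\widetilde P(B)$ to be such a limit. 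The first substantive step is to check this is \emph{well defined}, i.e.\ independent of the approximating sequence and of the subsequence: this uses continuous dependence of the Cauchy operator $U(t,\cdot)$ on the coefficient (as recorded in Example \ref{exHom2}) to identify $\widetilde P(B)$ with the "projector at time $0$" of the limiting equation $x'=B(t)x$, which is uniquely determined because the limiting equation inherits the dichotomy with the same $\mathcal N,\nu$ (the exponential estimates pass to the limit) and the spectral/stable--unstable splitting of a hyperbolic equation is unique.

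Once $\widetilde P$ is well defined, I would verify the four items of Definition \ref{defED2} in turn. That each $\widetilde P(B)$ is a projector ($\widetilde P(B)^2=\widetilde P(B)$) and that $\widetilde P(B)+\widetilde Q(B)=Id_{\mathfrak B}$ follows by passing the identities $P(t)^2=P(t)$ and $P(t)+Q(t)=Id_{\mathfrak B}$ (Remark \ref{remDE1}) through the limit, since squaring and addition are continuous in $[\mathfrak B]$. The cocycle invariance $U(t,B)\widetilde P(B)=\widetilde P(B^{t})U(t,B)$ comes from passing the corresponding relation $U(t+s,A)P(s)=P(t+s)U(t+s,A)$ — equivalently $U(t,A^{s})P(s)=P(t)|_{\text{shifted}}\,U(t,A^{s})$, using $U(t,A^{s})=U(t+s,A)U(s,A)^{-1}$ — to the limit along $s=t_k\to+\infty$, again invoking joint continuity of $U$ from Example \ref{exHom2}. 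For the exponential estimates in item 4, I would write $U(t,B)\widetilde P(B)=\lim_k U(t,A^{t_k})P(t_k)=\lim_k U(t+t_k,A)P(t+t_k)U(t_k,A)^{-1}P(t_k)\cdot(\text{adjust})$; more cleanly, note $U(t,A^{s})P(s)U^{-1}(0,A^{s})=U(t+s,A)P(A)U^{-1}(s,A)$, whose norm is $\le\mathcal N e^{-\nu t}$ for $t\ge0$ uniformly in $s$, so the limit satisfies $\|U(t,B)\widetilde P(B)\|\le\mathcal N e^{-\nu t}$ for $t\ge0$, and symmetrically $\|U(t,B)\widetilde Q(B)\|\le\mathcal N e^{\nu t}$ for $t\le0$, with the \emph{same} constants.

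The remaining, and I expect \emph{main}, obstacle is \textbf{continuity of the map $\widetilde P:H^{+}(A)\to[\mathfrak B]$}. Here is where Condition (\textbf{C}) does the real work: without it one only gets a possibly discontinuous selection. The plan is a standard compactness-plus-uniqueness argument: suppose $B_n\to B$ in $H^{+}(A)$ but $\widetilde P(B_n)\not\to\widetilde P(B)$; then along a subsequence $\|\widetilde P(B_n)-\widetilde P(B)\|\ge\delta>0$. Since each $\widetilde P(B_n)$ is itself a limit of elements $P(t)$ lying in the precompact set $\overline{\{P(t):t\in\mathbb R\}}$, the $\widetilde P(B_n)$ lie in this same compact set, so a further subsequence converges to some $\Pi\in[\mathfrak B]$ with $\|\Pi-\widetilde P(B)\|\ge\delta$. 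A diagonal argument produces times $s_n\to+\infty$ with $A^{s_n}\to B$ and $P(s_n)\to\Pi$; but then, by the same well-definedness argument as above, $\Pi$ must equal the unique projector at $0$ of the dichotomous equation $x'=B(t)x$, i.e.\ $\Pi=\widetilde P(B)$ — a contradiction. Thus continuity holds. I would be careful that all convergences are in the correct topologies (compact-open on $C(\mathbb R_+,[\mathfrak B])$, norm on $[\mathfrak B]$), and that the uniqueness of the dichotomy splitting for the limit equation — the linchpin used repeatedly — is justified from the exponential estimates alone (stable set $=\{u:\ \sup_{t\ge0}|U(t,B)u|<\infty\}$, complemented by the range of $\widetilde Q(B)$, forcing the projector). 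With continuity in hand, all hypotheses of Definition \ref{defED2} are met, completing the proof.
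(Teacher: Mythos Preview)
The paper does not supply its own proof of Lemma~\ref{lDP2}; the lemma is stated with a citation to \cite{Che_2024.2} and no argument is given here. So there is nothing in this paper to compare your proposal against directly.

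Your approach is the standard one and is sound in outline: define $\widetilde P(B)$ as a subsequential limit of $P(t_k)$ along $A^{t_k}\to B$ (using Condition~(\textbf{C}) for existence), observe that the limit equation $x'=B(t)x$ inherits the exponential dichotomy with the same constants, invoke uniqueness of the stable/unstable splitting to get well-definedness, and run the compactness-plus-uniqueness argument for continuity. The verification of invariance and of the exponential bounds via $U(t,A^{s})P(s)=U(t+s,A)P(A)U^{-1}(s,A)$ is correct. One correction worth making: you assert that for $B\in H^{+}(A)$ there is a sequence $t_k\to+\infty$ with $A^{t_k}\to B$, but $H^{+}(A)$ is the closure of $\{A^{\tau}:\tau\ge 0\}$ and therefore contains the forward orbit itself; for $B=A^{s}$ with finite $s$ one simply sets $\widetilde P(B):=P(s)$, and you should check consistency of the two prescriptions when they overlap (i.e.\ when $A$ is positively Poisson stable). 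This is precisely where one uses that for $B\in\omega_A$ the limit equation is dichotomous on the whole line (cf.\ Lemma~\ref{lED1}), so its projector is uniquely determined and must agree with $P(s)$ whenever $B=A^{s}\in\omega_A$. With that bookkeeping in place, your argument goes through.
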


\section{RAP Solutions of Linear Differential
Equations}\label{Sec4}

Consider a linear differential equation
\begin{equation}\label{eqLE1}
x'=A(t)x,
\end{equation}
where $A\in C(\mathbb R,[\mathfrak B])$. Along with the equation
(\ref{eqLE1}) consider the family of equations
\begin{equation}\label{eqLE2}
x'=B(t)x,\nonumber
\end{equation}
where $B\in H^{+}(A)$.

\begin{definition} A function $g\in C(\mathbb{R},\mathfrak B)$
is called $\omega$(respectively, $\alpha)$--limit for $f\in
C(\mathbb R,\mathfrak B)$, if there exists a sequence
$t_n\to+\infty$ (respectively, $-\infty)$ such that $f^{t_n}\to g$
in the topology of the space $C(\mathbb{R},\mathfrak B)$.
\end{definition}

By $\omega_f$ (respectively, $\alpha_f $) there is denoted the set
of all $\omega$ (respectively, $\alpha$)--limit functions for $f$
and $\Delta_f:=\omega_f\bigcup
\alpha_f$\index{$\Delta_f$,$\omega_f$,$\alpha_f$}. Assume that
\begin{equation*}
  H^{+}(f):=\overline{\{f^{\tau}:\ \tau\in\mathbb{R}^{+}\}},
  \ \ H^{-}(f):=\overline{\{f^{\tau}:\ \tau\in\mathbb{R}^{-}\}}
\end{equation*}
and
\begin{equation*}
  H(f):=H^{+}(f)\cup H^{-}(f),\index{$H(f)$, $H^{+}(f)$, $H^{-}(f)$}
\end{equation*}
where by bar there is denoted the closure in the topology of the
space $C(\mathbb{R},\mathfrak B)$.

Consider a differential equation
\begin{equation}\label{eqED3}
  \frac{dx}{dt}=A(t)x,
\end{equation}
where $A\in C(\mathbb{R},[\mathfrak B])$. Denote by $U(t,A)$ the
Cauchy operator of the equation (\ref{eqED3}) and
\begin{equation*}
  \varphi(t,A,x)=U(t,A)x.
\end{equation*}

\begin{lemma}\label{lED1} \cite{Che_2024.2} Under the Condition (C) if
the equation (\ref{eqLE1}) is hyperbolic on the semi-axis $\mathbb
R_{+}$ (respectively, on the semi-axis $\mathbb R_{-}$), then for
any $B\in \omega_{A}$ (respectively, $B\in \alpha_{A}$) the
equation
\begin{equation}\label{eqED4}
  \dot y=B(t)y
\end{equation}
is hyperbolic on $\mathbb{R}$.
\end{lemma}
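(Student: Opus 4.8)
The plan is to transfer the exponential dichotomy of (\ref{eqLE1}) on $\mathbb R_+$ to the limit equation (\ref{eqED4}) on the whole line by a compactness-and-limiting argument, using Condition (C) exactly to get a well-defined limiting projector. First I would recall, via Remark \ref{remUC2} and Lemma \ref{lDP2}, that hyperbolicity of (\ref{eqLE1}) on $\mathbb R_+$ in the sense of Definition \ref{defED1} is equivalent (under Condition (C)) to hyperbolicity in the sense of Definition \ref{defED2}, i.e. to the existence of a continuous projector-valued map $P:H^+(A)\to[\mathfrak B]$ with $U(t,B)P(B)=P(B^t)U(t,B)$ and the two exponential estimates with uniform constants $\mathcal N\ge 1$, $\nu>0$. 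In particular, for every $B\in H^+(A)$ the Cauchy operator satisfies $\|U(t,B)P(B)\|\le\mathcal N e^{-\nu t}$ for $t\ge 0$ and $\|U(t,B)Q(B)\|\le\mathcal N e^{\nu t}$ for $t\le 0$, where $Q(B)=Id_{\mathfrak B}-P(B)$.

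Now fix $B\in\omega_A$ and pick $t_n\to+\infty$ with $A^{t_n}\to B$ in $C(\mathbb R,[\mathfrak B])$. Because $\omega_A\subseteq H^+(A)$, the projector $P(B)$ is already defined, and by continuity of $P$ on $H^+(A)$ we have $P(A^{t_n})\to P(B)$ in $[\mathfrak B]$. The key point is that the Cauchy operators converge locally uniformly: $U(t,A^{t_n})\to U(t,B)$ uniformly on compact $t$-intervals (continuous dependence of linear ODEs on the coefficient, as in \cite[Ch.III]{DK_1970}), and likewise $U^{-1}(\tau,A^{t_n})\to U^{-1}(\tau,B)$. For the solution operator over the translated equation, $U(t,A^{t_n})=U(t+t_n,A)U^{-1}(t_n,A)$, which via the cocycle/Green's-function identities equals the Cauchy operator of $x'=A^{t_n}(t)x$; the point is that $U(t,A^{t_n})P(A^{t_n})U^{-1}(\tau,A^{t_n})=G_{A^{t_n}}(t,\tau)$ for $t>\tau$ already obeys the bound $\mathcal N e^{-\nu(t-\tau)}$ on ALL of $\mathbb R$ (not merely $\mathbb R_+$), because a shift $A^{t_n}$ of a semi-axis-hyperbolic equation inherits the dichotomy on $[-t_n,\infty)$, which exhausts $\mathbb R$ as $n\to\infty$. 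Passing to the limit in the inequalities $\|U(t,A^{t_n})P(A^{t_n})U^{-1}(\tau,A^{t_n})\|\le\mathcal N e^{-\nu(t-\tau)}$ (valid for $t>\tau$, $t,\tau\ge -t_n$) and the corresponding estimate for $Q$ then yields, for the limit equation (\ref{eqED4}),
\begin{equation*}
\|U(t,B)P(B)U^{-1}(\tau,B)\|\le\mathcal N e^{-\nu(t-\tau)}\ \ (t>\tau),\qquad \|U(t,B)Q(B)U^{-1}(\tau,B)\|\le\mathcal N e^{\nu(t-\tau)}\ \ (t<\tau),
\end{equation*}
for all $t,\tau\in\mathbb R$, which is precisely exponential dichotomy of (\ref{eqED4}) on $\mathbb R$. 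The invariance identity $U(t,B)P(B)=P(B^t)U(t,B)$ likewise passes to the limit, and $P(B)^2=P(B)$ survives by continuity of operator multiplication. The case $B\in\alpha_A$ with (\ref{eqLE1}) hyperbolic on $\mathbb R_-$ is symmetric, taking $t_n\to-\infty$ and working with $H^-(A)$.

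I expect the main obstacle to be making rigorous the claim that the shifted equations $x'=A^{t_n}(t)x$ are hyperbolic on the growing intervals $[-t_n,\infty)$ with the \emph{same} constants $\mathcal N,\nu$ and with projectors that converge — i.e. controlling the dichotomy uniformly in $n$ near the left endpoint $-t_n$ rather than just on the fixed semi-axis. This is where Condition (C) is essential: it guarantees that the family $\{U(t,A)PU^{-1}(t,A)\}_{t\in\mathbb R}$ is precompact, so the translated projectors $P(A^{t_n}) = U(t_n,A)PU^{-1}(t_n,A)$ stay in a compact set and a subsequence converges to a genuine projector $P(B)$; without it the limiting "projector" could fail to be bounded or fail to be idempotent. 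A secondary technical point is justifying the interchange of limit and norm in the exponential estimates, which follows from the locally uniform convergence of the Cauchy operators together with the fact that the bounds are uniform in $n$. Once these two points are in place, the lemma follows by the limiting argument sketched above.
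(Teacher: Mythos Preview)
The paper does not supply a proof of this lemma; it is quoted from \cite{Che_2024.2}. Your limiting argument is the standard one and is correct. One remark: the ``main obstacle'' you anticipate is not really an obstacle, because the identity
\[
U(t,A^{t_n})\,P(A^{t_n})\,U^{-1}(\tau,A^{t_n})=U(t+t_n,A)\,P\,U^{-1}(\tau+t_n,A)
\]
(with $P(A^{t_n})=U(t_n,A)PU^{-1}(t_n,A)$) shows immediately that the shifted equations carry the dichotomy on $[-t_n,\infty)$ with the \emph{original} constants $\mathcal N,\nu$, so no separate uniformity argument near the left endpoint is needed. Condition (C), through Lemma \ref{lDP2}, upgrades the dichotomy to the cocycle form of Definition \ref{defED2} and in particular gives a continuous projector $P:H^{+}(A)\to[\mathfrak B]$; since $\omega_A\subseteq H^{+}(A)$ and $A^{t_n}\to B$, you get $P(A^{t_n})\to P(B)$ directly, without passing to subsequences. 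Combining this with locally uniform convergence $U(\cdot,A^{t_n})\to U(\cdot,B)$ and letting $n\to\infty$ in the two norm inequalities yields the exponential dichotomy of (\ref{eqED4}) on all of $\mathbb R$, as you outline.
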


\begin{coro}\label{cor*4.2.2}
Assume that the equation (\ref{eqED3}) is hyperbolic on
$\mathbb{R}_+$ (respectively, on the semi-axis $\mathbb R_{-}$),
the Condition (\textbf{C}) is fulfilled and $B\in\omega_A$
(respectively, $B\in \alpha_{A}$). Then the equation (\ref{eqED4})
has not nonzero bounded on $\mathbb{R}$ solutions.
\end{coro}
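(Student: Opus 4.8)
The plan is to deduce Corollary~\ref{cor*4.2.2} directly from Lemma~\ref{lED1}, using the elementary fact that a hyperbolic linear equation on the whole line $\mathbb R$ admits no nonzero bounded solution. First I would invoke Lemma~\ref{lED1}: under Condition~(\textbf{C}), since the equation~(\ref{eqED3}) is hyperbolic on $\mathbb R_{+}$ (respectively $\mathbb R_{-}$) and $B\in\omega_{A}$ (respectively $B\in\alpha_{A}$), the limit equation~(\ref{eqED4}) $\dot y=B(t)y$ is hyperbolic on all of $\mathbb R$. So it remains to show that a linear equation hyperbolic on $\mathbb R$ has only the trivial bounded solution.

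For that final step, let $y(\cdot)$ be a bounded solution of~(\ref{eqED4}) on $\mathbb R$, say $\sup_{t\in\mathbb R}|y(t)|=:M<\infty$. Write $y(t)=U(t,B)y(0)$ and split $y(0)=P(B)y(0)+Q(B)y(0)$, where $P(B),Q(B)$ are the dichotomy projections from Definition~\ref{defED2} (with constants $\mathcal N\ge 1$, $\nu>0$). Using the intertwining relation $U(t,B)P(B)=P(B^{t})U(t,B)$ and the estimate $\|U(t,B)P(B)\|\le \mathcal Ne^{-\nu t}$ for $t\ge 0$, one gets for $t\ge 0$
\begin{equation}\label{eqCor422a}
|P(B^{t})y(t)|=|U(t,B)P(B)y(0)|\le \mathcal N e^{-\nu t}|y(0)|\longrightarrow 0,
\end{equation}
while from $\|U(t,B)Q(B)\|\le\mathcal Ne^{\nu t}$ for $t\le 0$ one estimates, for $s\ge 0$,
\begin{equation}\label{eqCor422b}
|Q(B)y(0)|=|U(-s,B^{s})^{-1}\cdot U(-s,B^{s})Q(B^{s})U(s,B)y(0)|\ ,
\end{equation}
and running the dichotomy backward on the unstable part gives $|Q(B^{s})y(s)|\le \mathcal N e^{-\nu s}$-type decay, hence $Q(B^{s})y(s)\to 0$ as $s\to+\infty$ as well; combining, $y(t)\to 0$ as $t\to+\infty$. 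A symmetric argument on $(-\infty,0]$, exchanging the roles of the stable and unstable subspaces, gives $y(t)\to 0$ as $t\to-\infty$. Then, evaluating the dichotomy inequalities at a fixed pair $t>\tau$ and letting the free endpoint run to $\pm\infty$, one forces $P(B)y(0)=0$ and $Q(B)y(0)=0$, so $y(0)=0$ and hence $y\equiv 0$.

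The cleanest way to organize the last step, and the one I would actually write, is to note that hyperbolicity on $\mathbb R$ of~(\ref{eqED4}) means the cocycle generated by it over $H(B)$ is hyperbolic in the sense of Definition~\ref{defED01}, so its Green's function $G_{B}(t,\tau)$ is well defined and satisfies $\|G_{B}(t,\tau)\|\le \mathcal N e^{-\nu|t-\tau|}$; any bounded solution $y$ on $\mathbb R$ of the homogeneous equation must then satisfy $y(t)=\int_{\mathbb R}G_{B}(t,\tau)\cdot 0\,d\tau=0$ by the uniqueness of bounded solutions of the inhomogeneous problem (the difference of two bounded solutions of $\dot y=B(t)y+g(t)$ is a bounded solution of the homogeneous equation, and the variation-of-constants / Green's function representation exhibits one such bounded solution, namely $0$ when $g=0$; but to conclude it is the \emph{only} one, decompose as above). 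The main obstacle is purely expository: one has to be careful that Lemma~\ref{lED1} delivers hyperbolicity on the full line $\mathbb R$ (not merely a half-line), which is exactly what makes the "no nonzero bounded solution" conclusion available, since on a half-line a hyperbolic equation generally does possess nontrivial bounded solutions. Once the two-sided hyperbolicity is in hand, the argument that the stable and unstable components of $y(0)$ both vanish is routine.
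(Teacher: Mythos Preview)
Your approach is exactly the paper's: the corollary is stated immediately after Lemma~\ref{lED1} with no separate proof, since it follows at once from that lemma together with the standard fact that a linear equation hyperbolic on the whole line $\mathbb R$ has only the trivial bounded solution. Your exposition of that standard fact is somewhat roundabout (the display~(\ref{eqCor422b}) is garbled), but the clean version you end with---fix $t=0$, use $\|U(0,B)Q(B)U^{-1}(\tau,B)\|\le\mathcal N e^{-\nu\tau}$ for $\tau\ge 0$ and $\|U(0,B)P(B)U^{-1}(\tau,B)\|\le\mathcal N e^{\nu\tau}$ for $\tau\le 0$, then let $\tau\to\pm\infty$ against the bound $|y(\tau)|\le M$---is correct and is all that is needed.
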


\begin{coro}\label{cor*4.2.3}
Assume tat the equation (\ref{eqED3}) is hyperbolic on $\mathbb
R_{+}$ (respectively, on the semi-axis $\mathbb R_{-}$), Condition
(\textbf{C}) holds and $A$ is positively Poisson stable, i.e.,
$A\in\omega_A$ (respectively, $A$ is negatively Poisson stable,
i.e., $A\in \alpha_{A}$). Then the equation (\ref{eqED3}) is
hyperbolic on $\mathbb{R}$.
\end{coro}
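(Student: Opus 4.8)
The plan is to deduce this immediately from Lemma \ref{lED1}. The key observation is purely definitional: the hypothesis that $A$ is positively Poisson stable means exactly that $A\in\omega_A$, so that $A$ itself is an admissible choice for the function $B$ appearing in the conclusion of Lemma \ref{lED1}. No genuinely new argument is needed beyond this specialization.

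Concretely, assume that equation (\ref{eqED3}) is hyperbolic on $\mathbb R_{+}$, that Condition (\textbf{C}) holds, and that $A\in\omega_A$. Since (\ref{eqED3}) coincides with (\ref{eqLE1}), Lemma \ref{lED1} applies and asserts that $\dot y=B(t)y$ is hyperbolic on $\mathbb R$ for every $B\in\omega_A$. Taking $B:=A$ (which is legitimate precisely because $A\in\omega_A$), we obtain that $\dot y=A(t)y$ — that is, equation (\ref{eqED3}) — is hyperbolic on $\mathbb R$, which is the assertion. For the parenthetical ("respectively") case one argues verbatim: if (\ref{eqED3}) is hyperbolic on $\mathbb R_{-}$ and $A$ is negatively Poisson stable, then $A\in\alpha_A$, and Lemma \ref{lED1} applied with $B:=A\in\alpha_A$ gives hyperbolicity of (\ref{eqED3}) on $\mathbb R$.

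There is essentially no obstacle in this corollary; the entire content is the observation that Poisson stability of $A$ places $A$ inside its own $\omega$-limit (respectively, $\alpha$-limit) set, after which the statement is a one-line specialization of the preceding lemma. The only point one might wish to record explicitly is that the projections $P(A),Q(A)$ and the constants $\mathcal N\ge 1$, $\nu>0$ witnessing hyperbolicity on $\mathbb R$ are exactly those produced by Lemma \ref{lED1} for $B=A$.
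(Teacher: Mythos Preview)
Your proposal is correct and matches the paper's approach: the corollary is stated immediately after Lemma \ref{lED1} without a separate proof, precisely because it follows by the one-line specialization $B:=A\in\omega_A$ (respectively, $B:=A\in\alpha_A$) that you describe.
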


\begin{definition}\label{defCS1} Let $\mathbb T\in \{\mathbb R_{+},\mathbb R\}$.
A solution $\varphi \in C(\mathbb T,\mathfrak
B)$ of the equation
\begin{equation}\label{eqLEf1}
x'=A(t)x+f(t)
\end{equation}
is said to be positively remotely compatible if $\mathfrak
L_{(A,f)}^{+\infty}\subseteq \mathfrak L_{\varphi}^{+\infty}$.
\end{definition}

\begin{remark}\label{remCS1} Assume that $(A,f)\in C(\mathbb R_{+},[\mathfrak B])\times C(\mathbb R_{+},\mathfrak
B)$ is remotely stationary (respectively, remotely
$\tau$-periodic, remotely almost periodic). If $\varphi \in
C(\mathbb R_{+},\mathfrak B)$ is a compatible solution of the
equation (\ref{eqLEf1}), then the solution $\varphi$ is also
remotely stationary (respectively, remotely $\tau$-periodic,
remotely almost periodic).
\end{remark}

This statement follows directly from Definition \ref{defCS1} and
Theorem \ref{thRAP4}.

\begin{theorem}\label{thRAPDE1} \cite{Che_2024.2} Assume that $(A,f)\in C(\mathbb
R,[\mathfrak B])\times C(\mathbb R,\mathfrak B)$ and the following
conditions are fulfilled:
\begin{enumerate}
\item the functions $A$ and $f$ are positively Lagrange stable;
\item the equation (\ref{eqLE1}) is hyperbolic on the semi-axis
$\mathbb R_{+}$ with the projection $P(A)$.
\end{enumerate}

Then
\begin{enumerate}
\item the equation (\ref{eqLEf1}) admits at least one remotely
compatible solution $\varphi$; \item
\begin{equation}\label{eqF1}
\varphi(t)=\int_{0}^{+\infty}G_{A}(t,\tau)f(\tau)d\tau,
\end{equation}
where
\begin{equation}\label{eqF2}
 G_{A}(t,\tau):=\left\{\begin{array}{ll}
&\!\! U(t,A)P(A)U^{-1}(\tau,A),\;\mbox{if}\; t>\tau \\[2mm]
&\!\! -U(t,A)(I-P(A))U^{-1}(\tau,A), \;\mbox{if}\; t<\tau ,
\end{array}
\right.\nonumber
\end{equation}
and $U(t,A)$ is the Cauchy operator of the equation (\ref{eqLE1});
\item the Green's operator $\mathbb G_{A}$ defined by
\begin{equation}\label{eqG1}
\mathbb G_{A}(f)(t):=\int_{0}^{+\infty}G_{A}(t,\tau)f(\tau)d\tau \
\ (t\in \mathbb R_{+}) \nonumber
\end{equation}
is a linear bounded operator defined on $C_{b}(\mathbb
R_{+},\mathfrak B)$ with value in $C_{b}(\mathbb R_{+},\mathfrak
B)$ and
\begin{equation}\label{eqG2}
\|\mathbb G_{A}(f)\|\le \frac{2\mathcal N}{\nu}\|f\|
\end{equation}
for any $f\in C_{b}(\mathbb R_{+},\mathfrak B)$.
\end{enumerate}
\end{theorem}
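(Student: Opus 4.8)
The plan is to build the candidate bounded solution directly from the Green's function and then identify it as a remotely compatible solution via the comparability machinery of Section~2. First I would recall that since the equation $x'=A(t)x$ is hyperbolic on $\mathbb R_+$ with projection $P(A)$, the estimates $\Vert U(t,A)P(A)U^{-1}(\tau,A)\Vert\le\mathcal N e^{-\nu(t-\tau)}$ for $t\ge\tau$ and $\Vert U(t,A)Q(A)U^{-1}(\tau,A)\Vert\le\mathcal N e^{\nu(t-\tau)}$ for $t\le\tau$ hold. Splitting the integral $\int_0^{+\infty}G_A(t,\tau)f(\tau)\,d\tau=\int_0^t U(t,A)P(A)U^{-1}(\tau,A)f(\tau)\,d\tau-\int_t^{+\infty}U(t,A)Q(A)U^{-1}(\tau,A)f(\tau)\,d\tau$, each piece converges absolutely for bounded $f$, and a direct majorization by $\mathcal N\|f\|\int_0^t e^{-\nu(t-\tau)}\,d\tau+\mathcal N\|f\|\int_t^{+\infty}e^{-\nu(\tau-t)}\,d\tau\le\frac{2\mathcal N}{\nu}\|f\|$ gives boundedness of $\varphi$ and the estimate~\eqref{eqG2}; continuity of $\varphi$ follows from continuity of $G$ under the integral and dominated convergence. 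Differentiating under the integral (using the jump relation $G(+0,y)-G(-0,y)=Id_{\mathfrak B}$) shows $\varphi'=A(t)\varphi+f(t)$, so $\varphi$ solves~\eqref{eqLEf1}. This simultaneously establishes items (ii) and (iii).

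Next I would prove item (i), that $\varphi$ is positively remotely compatible, i.e.\ $\mathfrak L_{(A,f)}^{+\infty}\subseteq\mathfrak L_{\varphi}^{+\infty}$. Take a sequence $t_n\to+\infty$ such that $(A^{t_n},f^{t_n})\to(\tilde A,\tilde f)$ in $C(\mathbb R,[\mathfrak B])\times C(\mathbb R,\mathfrak B)$; I must show $\varphi^{t_n}$ converges in $C(\mathbb R,\mathfrak B)$. The key is the translation identity $G_{A}(t+h,\tau)=G_{A^{h}}(t,\tau-h)$ together with the semigroup property of $U$, which gives $\varphi(t+t_n)=\int_{-t_n}^{+\infty}G_{A^{t_n}}(t,s)f^{t_n}(s)\,ds$. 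By Condition~(C) the projectors are precompact, so (passing to a subsequence) the Green's functions $G_{A^{t_n}}$ converge to the Green's function $G_{\tilde A}$ of the limit equation $\dot y=\tilde A(t)y$, which by Lemma~\ref{lED1} is hyperbolic on all of $\mathbb R$ with the \emph{same} constants $\mathcal N,\nu$; the uniform exponential bounds let me pass to the limit under the integral (uniformly on compact $t$-intervals) and conclude $\varphi^{t_n}\to\tilde\varphi$, where $\tilde\varphi(t)=\int_{-\infty}^{+\infty}G_{\tilde A}(t,s)\tilde f(s)\,ds$ is the unique bounded solution on $\mathbb R$ of $y'=\tilde A(t)y+\tilde f(t)$. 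Uniqueness of the bounded solution on $\mathbb R$ (a consequence of Corollary~\ref{cor*4.2.2}, since two bounded solutions differ by a bounded solution of the homogeneous equation, which must be zero) ensures the limit does not depend on the chosen subsequence, hence the full sequence $\varphi^{t_n}$ converges; this gives $\mathfrak L_{(A,f)}^{+\infty}\subseteq\mathfrak L_{\varphi}^{+\infty}$.

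The main obstacle is the limit-passage step in item (i): converting the precompactness furnished by Condition~(C) into genuine convergence $G_{A^{t_n}}\to G_{\tilde A}$ and then justifying interchange of limit and the improper integral over the growing interval $[-t_n,+\infty)$. This requires (a) extracting a convergent subsequence of projectors $P(A^{t_n})$ and checking that its limit is precisely the projector $P(\tilde A)$ associated to the hyperbolic structure of the limit equation on $\mathbb R$ (here I would lean on the continuous dependence of hyperbolic splittings on the equation, i.e.\ on the content of Lemma~\ref{lED1} and Lemma~\ref{lDP2}); and (b) a uniform-in-$n$ tail estimate $\big\Vert\int_{|s|\ge R}G_{A^{t_n}}(t,s)f^{t_n}(s)\,ds\big\Vert\le\frac{2\mathcal N}{\nu}\|f\|\,e^{-\nu(R-|t|)}$, which holds because the dichotomy constants are uniform along the hull, so that the tails are negligible uniformly in $n$ and only a fixed compact interval matters, where uniform convergence of the integrand is automatic. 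Once these two points are in hand, everything else is routine. I would close by remarking that the boundedness of $\mathbb G_A$ on $C_b(\mathbb R_+,\mathfrak B)$ already follows from the first paragraph, so~\eqref{eqG2} needs no separate argument.
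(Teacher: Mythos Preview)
The paper does not supply a proof of this theorem; it is quoted verbatim from \cite{Che_2024.2}, so there is no in-paper argument to compare yours against. Your treatment of items (ii) and (iii)---splitting the integral at $\tau=t$, bounding each piece by the dichotomy estimates to get $\frac{2\mathcal N}{\nu}\|f\|$, and verifying the equation via the jump relation---is the standard construction and is correct.

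For item (i) your strategy (translate the Green's representation, pass to the limit equation in $\omega_A$, identify the limit as its unique bounded-on-$\mathbb R$ solution, and use that uniqueness to eliminate subsequence dependence) is the natural one and is in the spirit of how the cited source proceeds. One point needs tightening: you invoke Condition~(\textbf{C}) and Lemma~\ref{lED1}, but Condition~(\textbf{C}) is not among the hypotheses of Theorem~\ref{thRAPDE1}. In this paper ``hyperbolic on $\mathbb R_+$'' is intended in the cocycle sense of Definition~\ref{defED2} (see Remark~\ref{remUC2}), which already provides a \emph{continuous} projection-valued map $P:H^+(A)\to[\mathfrak B]$ together with uniform dichotomy constants over the whole hull. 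Combined with the positive Lagrange stability of $A$ (so $H^+(A)$ is compact), this gives directly both the precompactness of the projectors and the hyperbolicity on $\mathbb R$ of every limit equation $y'=\tilde A(t)y$ with $\tilde A\in\omega_A$, without a separate appeal to Condition~(\textbf{C}). You should route the limit-passage through Definition~\ref{defED2} rather than through Condition~(\textbf{C}); otherwise you are silently importing an extra hypothesis. With that adjustment, your uniform tail estimate and the dominated-convergence step go through as written.
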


\begin{coro}\label{corR_1} \cite{Che_2024.2} Under the conditions of Theorem
\ref{thRAPDE1} every bounded on $\mathbb R_{+}$ solution $\psi$ of
the equation (\ref{eqLEf1}) is remotely compatible, i.e.,
$\mathfrak L_{(A,f)}^{+\infty}\subseteq \mathfrak
L_{\psi}^{+\infty}$.
\end{coro}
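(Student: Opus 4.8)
The plan is to compare an arbitrary bounded solution $\psi$ with the particular solution $\varphi$ produced by Theorem \ref{thRAPDE1}, show that the two differ only by a function vanishing at $+\infty$, and then deduce remote compatibility of $\psi$ from that of $\varphi$.

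First I would fix a bounded solution $\psi\in C_{b}(\mathbb R_{+},\mathfrak B)$ of (\ref{eqLEf1}). Since $f$ is positively Lagrange stable, the set $f(\mathbb R_{+})$ is precompact (cf.\ Lemma \ref{lAPF02}), so $f\in C_{b}(\mathbb R_{+},\mathfrak B)$ and the solution $\varphi$ from Theorem \ref{thRAPDE1} is bounded by the estimate (\ref{eqG2}). Consequently $v:=\psi-\varphi$ is a bounded solution of the homogeneous equation (\ref{eqLE1}), i.e.\ $v(t)=U(t,A)v(0)$ for all $t\ge 0$.

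The core step is to show $v(t)\to 0$ as $t\to+\infty$, using the hyperbolicity of (\ref{eqLE1}) on $\mathbb R_{+}$. Writing $v(0)=P(A)v(0)+Q(A)v(0)$, the first dichotomy estimate with $\tau=0$ gives $|U(t,A)P(A)v(0)|\le \mathcal N e^{-\nu t}|v(0)|\to 0$. For the unstable part I would set $w(t):=U(t,A)Q(A)v(0)$; using $U(0,A)=Id_{\mathfrak B}$, $Q^{2}(A)=Q(A)$ and $U^{-1}(t,A)U(t,A)=Id_{\mathfrak B}$ one has $Q(A)v(0)=U(0,A)Q(A)U^{-1}(t,A)w(t)$, so the second dichotomy estimate yields $|Q(A)v(0)|\le \mathcal N e^{-\nu t}|w(t)|$, i.e.\ $|w(t)|\ge \mathcal N^{-1}e^{\nu t}|Q(A)v(0)|$. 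Since $w(t)=v(t)-U(t,A)P(A)v(0)$ is bounded, this forces $Q(A)v(0)=0$, whence $v(t)=U(t,A)P(A)v(0)\to 0$. Thus $\psi=\varphi+v$ with $v\in C_{0}(\mathbb R_{+},\mathfrak B)$.

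Finally, to obtain $\mathfrak L_{(A,f)}^{+\infty}\subseteq \mathfrak L_{\psi}^{+\infty}$, take any $\{t_{n}\}\in \mathfrak L_{(A,f)}^{+\infty}$. Because $\varphi$ is remotely compatible (Theorem \ref{thRAPDE1}), $\{t_{n}\}\in \mathfrak L_{\varphi}^{+\infty}$, so $\varphi^{t_{n}}$ converges in $C(\mathbb R,\mathfrak B)$; and since $v(u)\to 0$ as $u\to+\infty$ and $t_{n}\to+\infty$, the translates $v^{t_{n}}$ tend to $0$ uniformly on every compact subset of $\mathbb R$, hence in $C(\mathbb R,\mathfrak B)$. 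Therefore $\psi^{t_{n}}=\varphi^{t_{n}}+v^{t_{n}}$ converges, i.e.\ $\{t_{n}\}\in \mathfrak L_{\psi}^{+\infty}$, which is the desired inclusion. I expect the middle paragraph — ruling out a nonzero unstable component of a bounded homogeneous solution in a general Banach space — to be the only nontrivial point; it is handled by inverting the $Q$-part dichotomy inequality to produce exponential growth.
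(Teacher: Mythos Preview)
Your argument is correct. The paper does not supply its own proof of this corollary --- it is quoted from \cite{Che_2024.2} --- so there is no in-paper argument to compare against, but the route you take (subtracting the distinguished solution $\varphi$ of Theorem~\ref{thRAPDE1}, showing the resulting bounded homogeneous solution has vanishing $Q(A)$-component and hence lies in $C_{0}(\mathbb R_{+},\mathfrak B)$, then passing to translates) is exactly the standard one and is consistent with how the surrounding results in Sections~\ref{Sec3}--\ref{Sec4} are organized. The only cosmetic point is that $\varphi$ in (\ref{eqF1}) and $\psi$ live in $C(\mathbb R_{+},\mathfrak B)$, so in the last paragraph the convergence $v^{t_n}\to 0$ should be phrased in $C(\mathbb R_{+},\mathfrak B)$ (uniformly on compact subsets of $\mathbb R_{+}$) rather than in $C(\mathbb R,\mathfrak B)$; this does not affect the conclusion $\{t_n\}\in\mathfrak L_{\psi}^{+\infty}$.
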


\begin{coro}\label{corR1}
Under the conditions of Theorem \ref{thRAPDE1} if the coefficients
$(A,f)$ are remotely stationary (respectively, remotely
$\tau$-periodic, remotely almost periodic), then
\begin{enumerate}
\item the equation (\ref{eqLEf1}) admits at least one remotely
stationary (respectively, remotely $\tau$-periodic, remotely
almost periodic) solution $\varphi$; \item
\begin{equation}\label{eqF1_1}
\varphi(t)=\int_{0}^{+\infty}G_{A}(t,\tau)f(\tau)d\tau ;\nonumber
\end{equation}
\item every bounded on $\mathbb R_{+}$ solution $\psi$ of the
equation (\ref{eqLEf1}) is remotely stationary (respectively,
remotely $\tau$-periodic, remotely almost periodic).
\end{enumerate}
\end{coro}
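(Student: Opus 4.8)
The plan is to derive Corollary~\ref{corR1} directly from Theorem~\ref{thRAPDE1} together with the comparability machinery collected in Section~\ref{Sec2} (specifically Theorem~\ref{thRAPF4} and Remark~\ref{remCS1}). First I would invoke Theorem~\ref{thRAPDE1}: since every remotely stationary, remotely $\tau$-periodic or remotely almost periodic function is in particular bounded and uniformly continuous on $\mathbb R_{+}$, hence positively Lagrange stable by Lemma~\ref{lAPF02} (its range is relatively compact and it is uniformly continuous), condition (1) of Theorem~\ref{thRAPDE1} holds for the pair $(A,f)$. Condition (2) is assumed. Therefore equation~(\ref{eqLEf1}) admits a solution $\varphi$ given by the integral formula~(\ref{eqF1}), and this $\varphi$ is remotely compatible, i.e. $\mathfrak L_{(A,f)}^{+\infty}\subseteq \mathfrak L_{\varphi}^{+\infty}$. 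This proves part (2) of the corollary (the formula) and sets up parts (1) and (3).

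Next, for part (1), I would apply the comparability result. Regard $(A,f)$ as a point of the shift dynamical system on $C(\mathbb R_{+},[\mathfrak B])\times C(\mathbb R_{+},\mathfrak B)$; by hypothesis it is Lagrange stable and remotely stationary (resp.\ remotely $\tau$-periodic, remotely almost periodic). The solution $\varphi$ satisfies $\mathfrak L_{(A,f)}^{+\infty}\subseteq \mathfrak L_{\varphi}^{+\infty}$, i.e.\ $\varphi$ is positively remotely comparable by the character of recurrence with $(A,f)$ in the sense of Definition~\ref{defCF1}. Theorem~\ref{thRAPF4} then yields immediately that $\varphi$ inherits the corresponding property: it is remotely stationary (resp.\ remotely $\tau$-periodic, remotely almost periodic). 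This is exactly part (1), and it is precisely the content of Remark~\ref{remCS1} applied on $\mathbb R_{+}$.

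For part (3), let $\psi$ be any solution of~(\ref{eqLEf1}) that is bounded on $\mathbb R_{+}$. By Corollary~\ref{corR_1} every such bounded solution is automatically remotely compatible, i.e.\ $\mathfrak L_{(A,f)}^{+\infty}\subseteq \mathfrak L_{\psi}^{+\infty}$. Hence the same application of Theorem~\ref{thRAPF4} as above (with $\psi$ in place of $\varphi$) shows that $\psi$ is remotely stationary (resp.\ remotely $\tau$-periodic, remotely almost periodic), which is part (3).

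The only point requiring a little care — and the one I would regard as the main obstacle, though it is more bookkeeping than substance — is to check that the three remote-recurrence hypotheses on $(A,f)$ do guarantee positive Lagrange stability, so that Theorem~\ref{thRAPDE1} genuinely applies: one needs the range of $(A,f)$ to be precompact and $(A,f)$ to be uniformly continuous on $\mathbb R_{+}$. This follows from the standard characterizations of remote almost periodicity (the $\omega$-limit set consists of almost periodic / $\tau$-periodic / stationary functions, which are themselves Lagrange stable, together with Lemma~\ref{lAPF_03} and the fact that a remotely almost periodic function on $\mathbb R_{+}$ with values in a Banach space is bounded and uniformly continuous). One must also make sure the Green's-function hypothesis "hyperbolic on $\mathbb R_{+}$ with projection $P(A)$" is literally the hypothesis of Theorem~\ref{thRAPDE1}, which it is. Once these verifications are in place, the corollary is an immediate three-line consequence of Theorem~\ref{thRAPDE1}, Corollary~\ref{corR_1}, and Theorem~\ref{thRAPF4}.
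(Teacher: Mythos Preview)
Your overall route is exactly the paper's: the author's proof is the single line ``This statement follows from Theorem~\ref{thRAPDE1}, Corollary~\ref{corR_1} and Remark~\ref{remCS1},'' and your three steps (invoke Theorem~\ref{thRAPDE1} for existence and the formula, use remote compatibility plus Theorem~\ref{thRAPF4}/Remark~\ref{remCS1} for part~(1), and Corollary~\ref{corR_1} plus the same comparability for part~(3)) unpack precisely that.

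One correction, though: the ``main obstacle'' you flag is not there. The corollary is stated \emph{under the conditions of Theorem~\ref{thRAPDE1}}, so positive Lagrange stability of $(A,f)$ is part of the hypotheses, not something to be derived. Your attempt to deduce it from remote almost periodicity is both unnecessary and unsupported by the paper: nowhere is it shown that a remotely almost periodic (or remotely $\tau$-periodic, or remotely stationary) function automatically has precompact range and is uniformly continuous, and indeed Theorem~\ref{thRAPF4} carries Lagrange stability as a separate assumption for this reason. Drop that paragraph; the rest is correct and matches the paper.
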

\begin{proof} This statement follows from Theorem \ref{thRAPDE1},
Corollary \ref{corR_1} and Remark \ref{remCS1}.
\end{proof}

\begin{remark}\label{remRAP_01} Note that all results from Sections
\ref{Sec3} and \ref{Sec4} take place also on the negative
semi-axis $\mathbb R_{-}$. This fact directly follows from the
results established in Sections \ref{Sec3} and \ref{Sec4} by
replacing $t$ with $-t$.
\end{remark}

\section{Two-sided RAP Solutions of Linear Differential
Equations}\label{Sec5}


Consider a linear differential equation
\begin{equation}\label{eqET1}
x'=A(t)x,
\end{equation}
where $A\in C(\mathbb R,[\mathfrak B])$, and denote by $U(t,A)$
the Cauchy operator of the equation (\ref{eqET1}).

\begin{definition}\label{defET1} \cite{EH_1988} A linear differential equation
(\ref{eqET1}) is said to have an exponential trichotomy (on
$\mathbb R$) if there exist linear projections $P, Q$ such that
\begin{equation}\label{eqET2}
PQ=QP, P+Q-PQ=I, \nonumber
\end{equation}
and constants $K\ge 1$ and $\alpha >0$ such that
\begin{eqnarray}\label{eqET3}
& \|U(t,A)PU^{-1}(\tau,A)\|\le \mathcal N e^{-\nu (t-\tau)}\ \
\mbox{for}\ \ 0\le \tau \le t,\nonumber \\
& \|U(t,A)(I- P)U^{-1}(\tau,A)\|\le \mathcal N e^{-\nu (\tau -t)}\
\
\mbox{for}\ \ t\le \tau, \tau \ge 0 \nonumber \\
& \|U(t,A)QU^{-1}(\tau,A)\|\le \mathcal N e^{-\nu (\tau-t)}\ \
\mbox{for}\ \
t\le \tau \le 0 \\
& \|U(t,A)(I-Q)U^{-1}(\tau,A)\|\le \mathcal N e^{-\nu (t-\tau)}\ \
\mbox{for}\ \  \tau \le t,\ \tau \le 0.\nonumber
\end{eqnarray}
\end{definition}

\begin{remark}\label{remET1} 1. If the equation (\ref{eqET1}) has an exponential trichotomy on
$\mathbb R$, then the first two inequalities in (\ref{eqET3})
imply exponential dichotomy of (\ref{eqET1}) on $\mathbb R_{+}$
with the projection $P_{+}=P$ and the last two inequalities of
(\ref{eqET3}) imply exponential dichotomy of (\ref{eqET1}) on
$\mathbb R_{-}$ with the projection $P_{-}=I-Q$.

2. If we take $Q=I-P$, then the equation (\ref{eqET1}) is
exponentially dichotomic on $\mathbb R$.
\end{remark}

\begin{definition}\label{defGF1} If the equation (\ref{eqET1}) has the exponential
trichotomy on $\mathbb R$, then for (\ref{eqET1}) can be
constructed a Green function \cite{EH_1988,PPX_2023} by the
equality
\begin{equation}\label{eqET3.1}
G(t,\tau)=\left\{ \begin{array}{ll}
\displaystyle{U(t,A)PU^{-1}(\tau,A)}, \ \ \ (0\le \tau < t)\\
\\
\displaystyle{-U(t,A)(I-P)U^{-1}(\tau,A)}, \ \ \ (t\le \tau,\ \tau \ge 0)\\
\\
\displaystyle{-U(t,A)QU^{-1}(\tau,A)}, \ \ \ (t\le \tau \le 0)\\
\\
\displaystyle{U(t,A)(I-Q)U^{-1}(\tau,A), \ \ \ (\tau \le t,\ \tau
\le 0)}.
\end{array}\right.
\end{equation}
\end{definition}

\begin{lemma}\label{lET_1} \cite[Ch.III,p.145]{Che_2015}, \cite{EH_1988,PPX_2023} Assume that the equation (\ref{eqET1})
has an exponential trichotomy on $\mathbb R$. Then
\begin{enumerate}
\item for any $f\in C_{b}(\mathbb R,\mathfrak B)$ the
nonhomogeneous equation
\begin{equation}\label{eqET1n}
x'=A(t)x+f(t)\nonumber
\end{equation}
has a bounded on $\mathbb R$ solution $\varphi$; \item the
solution $\varphi$ is defined by the equality
\begin{equation}\label{eqET3.2}
\varphi(t)=\int_{-\infty}^{+\infty}G(t,\tau)f(\tau)d\tau,\nonumber
\end{equation}
where $G(t,\tau)$ is the Green function of the equation
(\ref{eqET1}) defined by (\ref{eqET3.1}); \item $R\varphi(0)=0$,
where $R:=PQ$; \item $\|\varphi\|_{b}\le \frac{2\mathcal
N}{\nu}\|f\|_{b},$ where $\|\varphi\|_{b}:=\sup\{|\varphi(t)|\
t\in \mathbb R\}$.
\end{enumerate}
\end{lemma}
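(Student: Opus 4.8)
The plan is to verify the four assertions directly for the explicit candidate: let $\varphi$ be given by (\ref{eqET3.2}) with the Green function $G$ of (\ref{eqET3.1}), and write $\Phi(t):=U(t,A)$, so that $\Phi(t)\Phi^{-1}(\tau)$ is the transition operator of (\ref{eqET1}).

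\textbf{Convergence and the a priori bound.} First I would fix $t\in\mathbb R$ and split $\int_{-\infty}^{+\infty}G(t,\tau)f(\tau)\,d\tau$ at the two break points $0$ and $t$, choosing on each subinterval the branch of $G(t,\tau)$ prescribed by (\ref{eqET3.1}); which branches occur is determined solely by $\mathrm{sgn}\,t$. On each piece I apply the corresponding exponential estimate from Definition~\ref{defET1} together with $|f(\tau)|\le\|f\|_{b}$. Summing the resulting elementary integrals gives, for every $t$,
\[
|\varphi(t)|\ \le\ \mathcal N\|f\|_{b}\Big(\int_{-\infty}^{t}e^{-\nu(t-\tau)}\,d\tau+\int_{t}^{+\infty}e^{-\nu(\tau-t)}\,d\tau\Big)\ =\ \frac{2\mathcal N}{\nu}\|f\|_{b},
\]
which is assertion (4); it also shows the integral converges absolutely, uniformly for $t$ in compact sets, so $\varphi\in C(\mathbb R,\mathfrak B)$ is well defined.

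\textbf{Verifying the ODE.} On $\{t>0\}$ the splitting reads
\[
\varphi(t)=\Phi(t)\Big[\int_{-\infty}^{0}(I-Q)\Phi^{-1}(\tau)f(\tau)\,d\tau+\int_{0}^{t}P\,\Phi^{-1}(\tau)f(\tau)\,d\tau-\int_{t}^{+\infty}(I-P)\Phi^{-1}(\tau)f(\tau)\,d\tau\Big]=:\Phi(t)c(t),
\]
where the first and third integrals converge absolutely since, by Definition~\ref{defET1} with reference time $0$, $\|(I-Q)\Phi^{-1}(\tau)\|\le\mathcal N e^{\nu\tau}$ on $\tau\le0$ and $\|(I-P)\Phi^{-1}(\tau)\|\le\mathcal N e^{-\nu\tau}$ on $\tau\ge0$. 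Since $\tau\mapsto\Phi^{-1}(\tau)f(\tau)$ is continuous, the fundamental theorem of calculus gives $c'(t)=\big(P+(I-P)\big)\Phi^{-1}(t)f(t)=\Phi^{-1}(t)f(t)$, whence $\varphi'(t)=\Phi'(t)c(t)+\Phi(t)c'(t)=A(t)\varphi(t)+f(t)$. A symmetric splitting on $\{t<0\}$ yields $\varphi(t)=\Phi(t)d(t)$ with $d'(t)=\big((I-Q)+Q\big)\Phi^{-1}(t)f(t)=\Phi^{-1}(t)f(t)$, hence the same identity. Letting $t\to0^{\pm}$ in the two representations shows that both one-sided limits of $\varphi$, and of $\varphi'=A\varphi+f$, agree, so $\varphi$ is $C^{1}$ on all of $\mathbb R$ and solves $x'=A(t)x+f(t)$; together with the bound above this proves (1) and (2).

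\textbf{The side condition, and the main difficulty.} Letting $t\to0$ above gives $\varphi(0)=\int_{-\infty}^{0}(I-Q)\Phi^{-1}(\tau)f(\tau)\,d\tau-\int_{0}^{+\infty}(I-P)\Phi^{-1}(\tau)f(\tau)\,d\tau$. Applying the bounded operator $R=PQ=QP$ under the integral sign and using $P^{2}=P$, $Q^{2}=Q$, $PQ=QP$, one has $PQ(I-Q)=PQ-PQ^{2}=0$ and $PQ(I-P)=PQ-PQP=PQ-QP^{2}=PQ-QP=0$, so $R\varphi(0)=0$, which is (3). The only genuinely fiddly part of the argument is the bookkeeping of which of the four branches of $G$ is active on each of the (at most three) subintervals into which $0$ and $t$ partition $\mathbb R$, together with the matching checks at $\tau=0$ and $\tau=t$; once that table is set up, everything else is a one-line estimate or an application of the fundamental theorem of calculus. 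No deeper obstacle appears, and no uniqueness is claimed — the constraint $R\varphi(0)=0$ merely selects one bounded solution out of the affine family obtained by adding bounded solutions of the homogeneous equation $x'=A(t)x$.
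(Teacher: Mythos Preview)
Your argument is correct: the case-by-case bound $\|G(t,\tau)\|\le\mathcal N e^{-\nu|t-\tau|}$ (which you obtain by matching each of the four branches of (\ref{eqET3.1}) to the corresponding inequality in Definition~\ref{defET1}) gives both the convergence of the integral and the estimate (4); the factorisation $\varphi(t)=\Phi(t)c(t)$ and differentiation under the integral sign legitimately yield the ODE on $\mathbb R\setminus\{0\}$, and continuity of $\varphi$ and of $A\varphi+f$ at $0$ closes the gap; and the projector algebra $PQ(I-Q)=0=PQ(I-P)$ (using $PQ=QP$, $P^{2}=P$, $Q^{2}=Q$) kills both pieces of $\varphi(0)$, giving (3).

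There is nothing in the paper to compare your approach with: the lemma is quoted from \cite[Ch.III, p.145]{Che_2015} and \cite{EH_1988,PPX_2023}, and no proof is given here. Your direct verification is the standard one found in those references, so in substance it matches what the citations point to.
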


\begin{lemma}\label{lET1} \cite{EH_1988,PV_2011} The following
statements are pairwise equivalent:
\begin{enumerate}
\item the equation (\ref{eqET1}) has an exponentially trichotomy
on $\mathbb R$; \item the equation (\ref{eqET1}) has an
exponential dichotomy on $\mathbb R_{\pm}$ (that is, exponential
dichotomy on $\mathbb R_{+}$ and on $\mathbb R_{-}$) with
projections $P_{+}$, and $P_{-}$, respectively, such that
$P_{+}P_{-} = P_{-} P_{+} = P_{-}$; \item there are three mutually
orthogonal projections $P_{1}, P_{2}, P_{3}$, with sum $I$ and
such that
\begin{eqnarray}\label{eqET4}
& \|U(t,A)P_{1}U^{-1}(\tau,A)\|\le \mathcal N e^{-\nu (t-\tau)} \
\
\mbox{for} \ \ t\ge \tau, \nonumber \\
&
 \|U(t,A)P_{2}U^{-1}(\tau,A)\|\le \mathcal N e^{-\nu (\tau-t)} \ \
\mbox{for} \ \ t\le \tau, \nonumber \\
&
 \|U(t,A)P_{3}U^{-1}(\tau,A)\|\le \mathcal N e^{-\nu (t-\tau)} \ \
\mbox{for} \ \ t\ge \tau \ge 0, \nonumber \\
&
 \|U(t,A)P_{3}U^{-1}(\tau,A)\|\le \mathcal N e^{-\nu (\tau-t)} \ \
\mbox{for} \ \ t\le \tau \le 0 .\nonumber
\end{eqnarray}
\end{enumerate}
\end{lemma}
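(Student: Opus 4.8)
The statement to prove is Lemma~\ref{lET1}, asserting the pairwise equivalence of three characterizations of exponential trichotomy on $\mathbb R$: (i) the defining inequalities (\ref{eqET3}), (ii) exponential dichotomy on both half-lines with projections $P_{+}, P_{-}$ satisfying $P_{+}P_{-}=P_{-}P_{+}=P_{-}$, and (iii) the existence of three mutually orthogonal projections $P_1,P_2,P_3$ summing to $I$ with the decay estimates (\ref{eqET4}).

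The plan is to prove the cycle (i)$\Rightarrow$(ii)$\Rightarrow$(iii)$\Rightarrow$(i). For (i)$\Rightarrow$(ii): as already noted in Remark~\ref{remET1}, the first two inequalities of (\ref{eqET3}) give exponential dichotomy on $\mathbb R_{+}$ with $P_{+}:=P$, and the last two give exponential dichotomy on $\mathbb R_{-}$ with $P_{-}:=I-Q$. The relation $PQ=QP$ together with $P+Q-PQ=I$ must then be unpacked: from $P+Q-PQ=I$ one gets $(I-P)(I-Q)=0$, hence $P_{+}P_{-}=(I-Q)\cdot$ wait—rather, $P_{+}=P$ and $P_{-}=I-Q$, so $P_{+}P_{-}=P(I-Q)=P-PQ$ and $P_{-}P_{+}=(I-Q)P=P-QP=P-PQ$ using commutativity, and from $P+Q-PQ=I$ we have $P-PQ=I-Q=P_{-}$, so indeed $P_{+}P_{-}=P_{-}P_{+}=P_{-}$.

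For (ii)$\Rightarrow$(iii): given the two dichotomy projections with the compatibility relation, I would set $P_1:=P_{-}$ (the ``uniformly stable on all of $\mathbb R$'' part), $P_2:=I-P_{+}$ (the ``uniformly unstable on all of $\mathbb R$'' part), and $P_3:=P_{+}-P_{-}=P_{+}(I-P_{-})$ (the genuinely trichotomic middle part, stable on $\mathbb R_{+}$ and unstable on $\mathbb R_{-}$). The relation $P_{+}P_{-}=P_{-}P_{+}=P_{-}$ is exactly what makes these three idempotents, mutually orthogonal, and summing to $I$: $P_1+P_2+P_3=P_{-}+(I-P_{+})+(P_{+}-P_{-})=I$, $P_1P_3=P_{-}(P_{+}-P_{-})=P_{-}-P_{-}=0$, etc. The decay bounds (\ref{eqET4}) then follow: $U(t,A)P_1U^{-1}(\tau,A)=U(t,A)P_{-}U^{-1}(\tau,A)$ decays for $t\ge\tau$ because $P_{-}\le P_{+}$ lies in the stable projection on $\mathbb R_{+}$ and also (via $P_{-}$ being the stable projection on $\mathbb R_{-}$, using commutation with the Cauchy operator and the group property) extends the estimate to all real $t\ge\tau$; symmetrically for $P_2$; and $P_3$ inherits the one-sided estimates directly. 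For (iii)$\Rightarrow$(i): reconstruct $P:=P_1+P_3$ and $Q:=P_1+P_2$ (equivalently $I-P_3$). Orthogonality of $P_1,P_2,P_3$ gives $PQ=QP=P_1$ and $P+Q-PQ=I$; the four inequalities of (\ref{eqET3}) are then assembled from the four lines of (\ref{eqET4}) by splitting the relevant projection into its $P_i$-components and applying the matching estimate.

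The main obstacle I anticipate is not the algebra of projections—which is bookkeeping—but the extension of the one-sided decay estimates to the full line in the step (ii)$\Rightarrow$(iii), specifically showing that $U(t,A)P_{-}U^{-1}(\tau,A)$ decays like $\mathcal N e^{-\nu(t-\tau)}$ for \emph{all} $t\ge\tau$ in $\mathbb R$ and not merely on a half-line. The argument requires using that $P_{-}$ is simultaneously the stable projection on $\mathbb R_{-}$ and a sub-projection of the $\mathbb R_{+}$-stable projection $P_{+}$, together with the cocycle identity $U(t,A)P_{-}=P_{-}(t)U(t,A)$ where $P_{-}(t)=U(t,A)P_{-}U^{-1}(t,A)$, to glue the estimate across $t=0$; one may lose a constant factor depending on $\sup_{|s|\le 1}\|U(s,A)\|$, which is why the constant in (\ref{eqET4}) is stated loosely as the same $\mathcal N$ up to adjustment. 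Since this lemma is quoted from \cite{EH_1988,PV_2011}, I would at this point simply cite those references for the routine verification and record the projection identities above as the conceptual content.
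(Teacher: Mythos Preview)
The paper does not supply a proof of this lemma at all: it is stated with the citations \cite{EH_1988,PV_2011} and nothing further. Your final decision---to record the projection identities and then cite those references for the verification---is therefore exactly what the paper does (indeed, somewhat more).

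Your sketch of the algebra is essentially correct, but there is one slip in the (iii)$\Rightarrow$(i) direction. You set $P:=P_1+P_3$ and $Q:=P_1+P_2$. While this choice does satisfy $PQ=QP$ and $P+Q-PQ=I$ (both sides of the latter equal $P_1+P_2+P_3$), it does \emph{not} reproduce the third inequality of (\ref{eqET3}): for $t\le\tau\le 0$ one would need $\|U(t,A)(P_1+P_2)U^{-1}(\tau,A)\|\le\mathcal N e^{-\nu(\tau-t)}$, but the $P_1$-summand decays in the \emph{opposite} direction (line~1 of (\ref{eqET4}) gives decay only for $t\ge\tau$). The correct assignment, consistent with $P_{-}=I-Q$ from Remark~\ref{remET1} and your own identification $P_1=P_{-}$, is
\[
Q:=I-P_1=P_2+P_3,
\]
which yields $PQ=QP=P_3$ and $P+Q-PQ=I$, and then each of the four inequalities in (\ref{eqET3}) follows from the corresponding line of (\ref{eqET4}) (the third from lines~2 and~4 applied to $P_2$ and $P_3$, the fourth from lines~1 and~3 applied to $P_1$ and $P_3$). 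With this correction your outline is sound.
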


\begin{definition}\label{defTS1} A function $\varphi \in C(\mathbb R,\mathfrak
B)$ is said to be two-sided remotely periodic if it is positively
and negatively remotely $\tau$-periodic.
\end{definition}


\begin{theorem}\label{thTSRAP_1} Assume that $(A,f)\in C(\mathbb R,[\mathfrak B])\times C(\mathbb R,\mathfrak
B)$ and the following conditions are fulfilled:
\begin{enumerate}
\item $(A,f)$ is Lagrange stable; \item the equation (\ref{eqLE1})
has an exponential trichotomy on $\mathbb R$.
\end{enumerate}

Then
\begin{enumerate}
\item the equation (\ref{eqLEf1}) has at least one two-sided
remotely compatible solution $\varphi$, i.e., $\mathfrak
L_{(A,f)}\subseteq \mathfrak L_{\varphi}$; \item the solution
$\varphi$ is defined by
\begin{equation}\label{eqTS1}
\varphi(t)=\int_{-\infty}^{+\infty}G_{A}(t,\tau)f(\tau)d\tau,
\end{equation}
where $G_{A}(t,\tau)$ is the Green function of the equation
(\ref{eqET1}) defined by (\ref{eqET3.1}); \item every bounded on
$\mathbb R$ solution of the equation (\ref{eqLEf1}) is two-sided
remotely compatible.
\end{enumerate}
\end{theorem}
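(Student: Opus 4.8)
The plan is to exploit the ``trichotomy = dichotomy on both half-lines'' machinery (Lemma \ref{lET1}) together with the already-established one-sided theory of Section \ref{Sec4}. First I would record that, by Remark \ref{remET1}, the exponential trichotomy of (\ref{eqLE1}) on $\mathbb R$ gives an exponential dichotomy on $\mathbb R_{+}$ with projection $P_{+}=P$ and an exponential dichotomy on $\mathbb R_{-}$ with projection $P_{-}=I-Q$. Since $(A,f)$ is Lagrange stable (hence in particular positively \emph{and} negatively Lagrange stable), Theorem \ref{thRAPDE1} applies to the restriction of (\ref{eqLEf1}) to $\mathbb R_{+}$, and its analogue on $\mathbb R_{-}$ (valid by Remark \ref{remRAP_01}) applies to the restriction to $\mathbb R_{-}$. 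I would also need to check Condition (\textbf C) on each half-line; this is where one either invokes a standing hypothesis or observes that exponential trichotomy with the Green function (\ref{eqET3.1}) gives uniformly bounded projector families $\{U(t,A)PU^{-1}(t,A)\}$ on $\mathbb R_{+}$ and $\{U(t,A)(I-Q)U^{-1}(t,A)\}$ on $\mathbb R_{-}$, so that compactness of the hull closes the gap via Lemma \ref{lDP2}.

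Next I would produce the candidate solution. Define $\varphi$ by the two-sided integral (\ref{eqTS1}) with $G_{A}$ the trichotomy Green function (\ref{eqET3.1}); by Lemma \ref{lET_1} this is the (unique with $R\varphi(0)=0$) bounded-on-$\mathbb R$ solution of (\ref{eqLEf1}), and $\|\varphi\|_{b}\le \tfrac{2\mathcal N}{\nu}\|f\|_{b}$. The decisive step is to show $\mathfrak L_{(A,f)}\subseteq \mathfrak L_{\varphi}$, i.e.\ both $\mathfrak L_{(A,f)}^{+\infty}\subseteq \mathfrak L_{\varphi}^{+\infty}$ and $\mathfrak L_{(A,f)}^{-\infty}\subseteq \mathfrak L_{\varphi}^{-\infty}$. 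For the $+\infty$ inclusion: restricted to $\mathbb R_{+}$, Theorem \ref{thRAPDE1} yields a remotely compatible solution given by the one-sided Green integral $\int_{0}^{+\infty}G^{+}_{A}(t,\tau)f(\tau)d\tau$ against the dichotomy Green function on $\mathbb R_{+}$ (projection $P$); I must check that the restriction of $\varphi$ from (\ref{eqTS1}) to $\mathbb R_{+}$ differs from this one-sided solution only by a solution of the homogeneous equation lying in the stable subspace — in fact they coincide up to an exponentially decaying term, and any such correction does not affect the $\omega$-limit behaviour, so $\varphi|_{\mathbb R_{+}}$ inherits positive remote compatibility. Symmetrically, using Remark \ref{remRAP_01} on $\mathbb R_{-}$ with projection $I-Q$, $\varphi|_{\mathbb R_{-}}$ is negatively remotely compatible. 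Combining the two half-line inclusions gives $\mathfrak L_{(A,f)}\subseteq \mathfrak L_{\varphi}$, proving (i) and (ii).

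For statement (iii), let $\psi$ be any bounded-on-$\mathbb R$ solution of (\ref{eqLEf1}). Then $\psi-\varphi$ is a bounded-on-$\mathbb R$ solution of the homogeneous equation (\ref{eqLE1}); under exponential trichotomy the only such solution is governed by the finite-rank part $R=PQ$, and in particular $\psi-\varphi$ is bounded on both half-lines. On $\mathbb R_{+}$, Corollary \ref{corR_1} says every bounded-on-$\mathbb R_{+}$ solution of (\ref{eqLEf1}) is positively remotely compatible, so $\mathfrak L_{(A,f)}^{+\infty}\subseteq \mathfrak L_{\psi}^{+\infty}$; applying the $\mathbb R_{-}$-analogue (Remark \ref{remRAP_01}) gives $\mathfrak L_{(A,f)}^{-\infty}\subseteq \mathfrak L_{\psi}^{-\infty}$, hence $\mathfrak L_{(A,f)}\subseteq \mathfrak L_{\psi}$.

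The main obstacle I anticipate is the bookkeeping in the $+\infty$ inclusion: the two-sided Green integral (\ref{eqTS1}) restricted to $\mathbb R_{+}$ is \emph{not} literally the one-sided Green integral of Theorem \ref{thRAPDE1}, because the trichotomy projection $P$ on $\mathbb R_{+}$ and the dichotomy projection used in the one-sided theorem must be identified, and the contributions of the ``middle'' subspace $R\mathfrak B$ and of the integral over $\mathbb R_{-}$ must be shown to feed into $\varphi|_{\mathbb R_{+}}$ only as an exponentially decaying (hence $\omega$-limit-irrelevant) perturbation. Once that reconciliation is done, compatibility transfers cleanly via Theorem \ref{thRAP4}/Remark \ref{remCS1}. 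A secondary technical point is justifying Condition (\textbf C) on each half-line so that Lemmas \ref{lED1} and \ref{lDP2} are applicable; I expect this to follow from the uniform bounds built into the trichotomy estimates (\ref{eqET3}) plus precompactness of the hull $H(A)$ guaranteed by Lagrange stability of $A$.
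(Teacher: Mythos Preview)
Your overall architecture matches the paper's: construct $\varphi$ via Lemma \ref{lET_1}, split the trichotomy into dichotomies on $\mathbb R_{+}$ and $\mathbb R_{-}$ via Remark \ref{remET1}, and then invoke the one-sided theory of Section \ref{Sec4} on each half-line (with Remark \ref{remRAP_01} for $\mathbb R_{-}$). However, you have made the middle step considerably harder than the paper does, and the ``main obstacle'' you anticipate is illusory.

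The paper never tries to identify the restriction of the two-sided Green integral (\ref{eqTS1}) with the one-sided Green integral (\ref{eqF1}); there is no need to track exponentially decaying corrections or the $R\mathfrak B$ subspace. The point is that $\varphi$, once produced by Lemma \ref{lET_1}, is simply \emph{a} bounded-on-$\mathbb R_{+}$ solution of (\ref{eqLEf1}), and Corollary \ref{corR_1} applies to \emph{every} such solution, giving $\mathfrak L_{(A,f)}^{+\infty}\subseteq \mathfrak L_{\varphi}^{+\infty}$ immediately. You already use exactly this argument for statement (iii); the paper uses it for (i) as well. So your detour through comparing the two Green formulas is unnecessary work.

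Your secondary worry about Condition (\textbf C) also does not appear in the paper's proof: Theorem \ref{thRAPDE1} and Corollary \ref{corR_1} are stated and invoked without it (hyperbolicity on $\mathbb R_{+}$ plus Lagrange stability of $(A,f)$ suffice as hypotheses there), so no additional verification is needed at this stage.
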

\begin{proof} Since the function $f$ (respectively, the operator-function $A$)
is Lagrange stable, then it is bounded on $\mathbb R$. Assume that
the equation (\ref{eqET1}) has an exponential trichotomy on
$\mathbb R$ then by Lemma \ref{lET_1} the equation (\ref{eqLEf1})
has at least one bounded on $\mathbb R$ solution $\varphi$ which
is defined by (\ref{eqTS1}). Taking into account that under the
condition of Theorem the equation (\ref{eqET1}) has an exponential
trichotomy on $\mathbb R$ we conclude that it admits the
exponential dichotomy on $\mathbb R_{+}$ (respectively, on
$\mathbb R_{-}$). By Theorem \ref{thRAPDE1} and Corollary
\ref{corR_1} (see also Remark \ref{remRAP_01}) we have $\mathfrak
L^{\pm \infty}_{(A,f)}\subseteq \mathfrak L^{\pm
\infty}_{\varphi}$ and, consequently, $\mathfrak
L_{(A,f)}\subseteq \mathfrak L_{\varphi}$.

If $\bar{\varphi}$ is an arbitrary bounded on $\mathbb R$ solution
of the equation (\ref{eqLEf1}), then using the same arguments as
above we can prove that the solution $\bar{\varphi}$ is two-sided
remotely compatible.
\end{proof}

\begin{coro}\label{corTSRAPLD1} Under the conditions of
Theorem \ref{thTSRAP_1} if the equation (\ref{eqLE1}) satisfies
the condition of exponential trichotomy on $\mathbb R$, then
\begin{enumerate}
\item the equation (\ref{eqLEf1}) has a unique two-sided remotely
compatible solution $\varphi$; \item the solution $\varphi$ is
defined by
\begin{equation}\label{eqTS1.1}
\varphi(t)=\int_{-\infty}^{+\infty}G_{A}(t,\tau)f(\tau)d\tau
.\nonumber
\end{equation}
\end{enumerate}
\end{coro}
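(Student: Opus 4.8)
The plan is to deduce this corollary from Theorem \ref{thTSRAP_1} by adding the missing uniqueness assertion; everything else (existence and the integral representation) is already contained in that theorem, so the only genuine work is to rule out two distinct two-sided remotely compatible solutions. First I would observe that if $\varphi_1$ and $\varphi_2$ are both bounded on $\mathbb R$ solutions of (\ref{eqLEf1}), then their difference $\psi := \varphi_1 - \varphi_2$ is a bounded on $\mathbb R$ solution of the homogeneous equation (\ref{eqLE1}). The hypothesis is that (\ref{eqLE1}) has an exponential trichotomy on $\mathbb R$; I would therefore want to show that exponential trichotomy forces the only bounded on $\mathbb R$ solution of (\ref{eqLE1}) to be the zero solution, which immediately gives $\varphi_1 = \varphi_2$.

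To see that, I would use the characterization in Lemma \ref{lET1}(3): there are three mutually orthogonal projections $P_1,P_2,P_3$ with $P_1+P_2+P_3=I$, where $U(t,A)P_1U^{-1}(\tau,A)$ decays forward, $U(t,A)P_2U^{-1}(\tau,A)$ decays backward, and $U(t,A)P_3U^{-1}(\tau,A)$ decays in both directions away from the corresponding endpoint. Decompose $\psi(0) = P_1\psi(0) + P_2\psi(0) + P_3\psi(0)$ and correspondingly $\psi(t) = U(t,A)\psi(0)$. For the $P_1$-component, boundedness of $\psi$ as $t\to -\infty$ together with the forward-decay estimate (applied with the roles of $t,\tau$ reversed, i.e. $|U(t,A)P_1\psi(0)| = |U(t,A)P_1U^{-1}(0,A)\,\psi(0)| \le \mathcal N e^{-\nu(0-t)}|\psi(0)|\to 0$ as $t\to -\infty$) forces $P_1\psi(0)=0$; symmetrically the backward-decay estimate kills $P_2\psi(0)$ by letting $t\to+\infty$; and the $P_3$-component is killed by either limit. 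Hence $\psi(0)=0$, so $\psi\equiv 0$. Alternatively, and perhaps more cleanly, I would invoke Corollary \ref{cor*4.2.2} together with Remark \ref{remET1}(1): exponential trichotomy on $\mathbb R$ implies exponential dichotomy on $\mathbb R_+$, and a bounded on $\mathbb R$ (hence on $\mathbb R_+$) solution of a hyperbolic-on-$\mathbb R_+$ equation must vanish, since its $P$-component decays forward and its $Q$-component, being bounded forward, must be zero, and then backward decay of the $P$-component forces the remaining part to vanish as well.

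With uniqueness of the bounded solution in hand, the corollary follows: Theorem \ref{thTSRAP_1} provides at least one two-sided remotely compatible solution $\varphi$, given by (\ref{eqTS1}), and since any two-sided remotely compatible solution is in particular bounded on $\mathbb R$, the uniqueness of the bounded solution shows $\varphi$ is the only one. The representation (\ref{eqTS1.1}) is then just (\ref{eqTS1}) restated. I expect the only point requiring care to be the bookkeeping in the trichotomy estimates — making sure the correct inequality from (\ref{eqET3}) (or the correct projection from Lemma \ref{lET1}(3)) is applied on each half-line and that the limits are taken in the right direction — but this is routine once the decomposition $\psi(t)=\sum_i U(t,A)P_i\psi(0)$ is written down.
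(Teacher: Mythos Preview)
Your argument has a genuine gap at the $P_3$-step. From the estimates in Lemma~\ref{lET1}(3), for any $u$ in the range of $P_3$ one has $|U(t,A)P_3 u|\le \mathcal N e^{-\nu t}|u|$ for $t\ge 0$ and $|U(t,A)P_3 u|\le \mathcal N e^{\nu t}|u|$ for $t\le 0$; thus $U(t,A)P_3 u$ decays to $0$ at \emph{both} infinities and is therefore a \emph{bounded} nonzero solution of (\ref{eqLE1}) whenever $P_3 u\neq 0$. Your sentence ``the $P_3$-component is killed by either limit'' confuses ``$U(t,A)P_3\psi(0)\to 0$ as $t\to\pm\infty$'' with ``$P_3\psi(0)=0$''. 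In short, exponential \emph{trichotomy} on $\mathbb R$ does \emph{not} imply that the homogeneous equation has no nonzero bounded solutions; the whole point of the third projection $P_3=PQ$ is to capture exactly those solutions. (Your $P_1$-argument also misapplies the estimate --- the inequality $\|U(t,A)P_1U^{-1}(0,A)\|\le \mathcal N e^{-\nu(0-t)}$ is only guaranteed for $t\ge 0$, not $t\to -\infty$; the correct way to kill $P_1\psi(0)$ is to observe that $P_1$-solutions \emph{grow} as $t\to -\infty$, which follows by inverting the forward-decay bound. Your alternative route via dichotomy on $\mathbb R_+$ fails for the same reason: dichotomy on a half-line does not preclude bounded-on-$\mathbb R$ solutions.)

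The paper's own proof makes clear what is really going on: it invokes the fact that ``the equation (\ref{eqLE1}) with exponential \emph{dichotomy} on $\mathbb R$ has no nonzero bounded on $\mathbb R$ solutions''. In other words, the intended extra hypothesis in the corollary is exponential \emph{dichotomy} on $\mathbb R$ (the special case $Q=I-P$, equivalently $P_3=0$; see Remark~\ref{remET1}(2)), not merely trichotomy --- the word ``trichotomy'' in the corollary's statement is a typo, since trichotomy is already assumed in Theorem~\ref{thTSRAP_1} and adds nothing. With dichotomy on $\mathbb R$ in hand, uniqueness is immediate: the difference of two bounded solutions is a bounded solution of (\ref{eqLE1}), hence zero. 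Your overall strategy (existence from Theorem~\ref{thTSRAP_1}, uniqueness from ``no nonzero bounded homogeneous solution'') is exactly the paper's, but the crucial input is dichotomy, not trichotomy.
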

\begin{proof} This statement follows from Theorem \ref{thTSRAP_1}
because the equation (\ref{eqLE1}) with exponential dichotomy on
$\mathbb R$ has no nonzero bounded on $\mathbb R$ solutions.
\end{proof}

\begin{theorem}\label{thTSRAP1} Assume that $(A,f)\in C(\mathbb R,[\mathfrak B])\times C(\mathbb R,\mathfrak
B)$ and the following conditions are fulfilled:
\begin{enumerate}
\item $(A,f)$ is Lagrange stable and two-sided remotely almost
periodic (respectively, two-sided remotely $\tau$-periodic or
two-sided remotely stationary); \item the equation (\ref{eqLE1})
has an exponential trichotomy on $\mathbb R$.
\end{enumerate}

Then
\begin{enumerate}
\item the equation (\ref{eqLEf1}) has at least one two-sided
remotely almost periodic (respectively, two-sided remotely
$\tau$-periodic or two-sided remotely stationary) solution
$\varphi$; \item the solution $\varphi$ is defined by
\begin{equation}\label{eqTS_1}
\varphi(t)=\int_{-\infty}^{+\infty}G_{A}(t,\tau)f(\tau)d\tau,\nonumber
\end{equation}
where $G_{A}(t,\tau)$ is the Green function of the equation
(\ref{eqET1}) defined by (\ref{eqET3.1}); \item every bounded on
$\mathbb R$ solution of the equation (\ref{eqLEf1}) is two-sided
remotely almost periodic (respectively, two-sided remotely
$\tau$-periodic or two-sided remotely stationary).
\end{enumerate}
\end{theorem}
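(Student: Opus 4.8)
The plan is to deduce the theorem from Theorem \ref{thTSRAP_1} together with the one-sided Corollary \ref{corR1}, applied separately on each of the two semi-axes.

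First I would unpack the hypotheses. Since $(A,f)$ is Lagrange stable, it is bounded on $\mathbb R$ and is both positively and negatively Lagrange stable. By Remark \ref{remET1}(1) the exponential trichotomy of (\ref{eqLE1}) on $\mathbb R$ implies that (\ref{eqLE1}) has an exponential dichotomy on $\mathbb R_{+}$ with projection $P_{+}=P$ and on $\mathbb R_{-}$ with projection $P_{-}=I-Q$. Finally, ``$(A,f)$ is two-sided remotely almost periodic (respectively, $\tau$-periodic or stationary)'' means, by definition, that $(A,f)$ is simultaneously positively and negatively remotely almost periodic (respectively, $\tau$-periodic or stationary). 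Hence on the positive semi-axis the hypotheses of Theorem \ref{thRAPDE1} and of Corollary \ref{corR1} hold, and by the substitution $t\mapsto -t$ (Remark \ref{remRAP_01}) so do their negative-semi-axis counterparts.

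Next I would apply Theorem \ref{thTSRAP_1}, whose hypotheses are weaker than ours, to obtain a solution $\varphi$ of (\ref{eqLEf1}) bounded on $\mathbb R$ and given by the integral formula in its item 2; this is exactly item 2 of the present theorem, and item 1 will follow once the recurrence of $\varphi$ is established. I would then invoke Corollary \ref{corR1}(3) on $\mathbb R_{+}$: every solution of (\ref{eqLEf1}) bounded on $\mathbb R_{+}$ is positively remotely almost periodic (respectively, $\tau$-periodic or stationary). Applying the $\mathbb R_{-}$-analogue of Corollary \ref{corR1}(3) --- legitimate by Remark \ref{remRAP_01} --- shows that every solution bounded on $\mathbb R_{-}$ is negatively remotely almost periodic (respectively, $\tau$-periodic or stationary). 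Since any solution of (\ref{eqLEf1}) bounded on $\mathbb R$ restricts to solutions bounded on $\mathbb R_{+}$ and on $\mathbb R_{-}$, it is therefore two-sided remotely almost periodic (respectively, $\tau$-periodic or stationary); applied to $\varphi$ this gives item 1, and the statement for an arbitrary bounded solution is item 3. The three cases are handled uniformly, just as in Corollary \ref{corR1}.

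The proof is essentially an assembly of earlier results, so I do not expect a serious obstacle. The one place requiring care is the passage to $\mathbb R_{-}$: one must check that $t\mapsto -t$ converts an exponential dichotomy on $\mathbb R_{-}$ into one on $\mathbb R_{+}$ and negative remote almost periodicity of the coefficients into positive remote almost periodicity of the reflected coefficients, so that Corollary \ref{corR1}, stated only for $\mathbb R_{+}$, really applies. This is precisely the content of Remark \ref{remRAP_01}, so no new estimates are needed and everything reduces to the one-sided theory of Section \ref{Sec4}.
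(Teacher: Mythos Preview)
Your proposal is correct and follows essentially the same approach as the paper: the paper's proof consists of the single sentence ``This statement follows from Theorem \ref{thTSRAP_1} and Corollary \ref{corR1},'' and you have supplied precisely the details behind that sentence, invoking Theorem \ref{thTSRAP_1} for existence and the integral representation, and Corollary \ref{corR1} (together with its $\mathbb R_{-}$-analogue via Remark \ref{remRAP_01}) for the recurrence of every bounded solution on each semi-axis.
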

\begin{proof} This statement follows from Theorem \ref{thTSRAP_1}  and
Corollary \ref{corR1}.
\end{proof}

\section{Two-sided RAP Solutions of Perturbed Linear Differential
Equations}\label{Sec6}

Let $W\subseteq X$ be a bounded (respectively, compact) subset of
$X$. Denote by $C(\mathbb R\times W,X)$ the space of all
continuous functions $f :\mathbb R\times W \to X$ equipped with
the compact-open topology. On the space $C(\mathbb \mathbb R\times
W,X)$ is defined \cite[Ch.I]{Che_2015} a shift dynamical system
(Bebutov's dynamical system) $(C(\mathbb R\times W,X),\mathbb
R,\sigma)$, where $\sigma :\mathbb R\times C(\mathbb R\times
W,X)\to C(\mathbb R\times W,X)$ is defined by equality
$\sigma(\tau,f)=f^{\tau}$ and $f^{\tau}(t,x)=f(t+\tau,x)$ for any
$f\in C(\mathbb R\times W,X)$, $t,\tau \in \mathbb R$ and $x\in
W$.

If $Q$ is a compact subset of $W$, then the topology on the space
$C(\mathbb R\times Q,X)$ can be \cite{Shc_1967} defined by
distance
\begin{equation}\label{eqD1}
d(f,g):=\sup\limits_{l>0}\min\{\max\limits_{|t|\le l,\ x\in
Q}\rho(f(t,x),g(t,x)), l^{-1}\} .\nonumber
\end{equation}

\begin{definition}\label{defAA1} A function $f\in C(\mathbb
R\times W,X)$ is said to be (two-sided) remotely almost periodic
in $t\in \mathbb R$ uniformly with respect to $x\in W$ if for
arbitrary positive number $\varepsilon$ there exists a relatively
dense in $\mathbb R$ subset $\mathcal P(\varepsilon,f,W)$ such
that for any $\tau \in \mathcal P(\varepsilon,f,W)$ there exists a
number $L(\varepsilon,f,W,\tau)>0$ for which we have
\begin{equation}\label{eqAA2}
\sup\limits_{x\in W}\rho(f(t+\tau,x),f(t,x))<\varepsilon \nonumber
\end{equation}
for any $|t|\ge L(\varepsilon,f,W,\tau)$.
\end{definition}

If $Q$ is a compact subset of $W$ and $f\in C(\mathbb R\times
W,X)$ then we denote by $f_{Q}$ the restriction $f$ on $\mathbb
R\times Q$, i.e., $f_{Q}:=f\big{|}_{\mathbb R\times Q}$ and
$\mathfrak L_{f_{Q}}:=\{\{t_{n}\}\subset \mathbb R|\
\{f_{Q}^{t_n}\}$ converges in $C(\mathbb R\times Q,X)$ and $t_n\to
+\infty$ as $n\to \infty$ $\}$.

Note that $C(Q,X)$ is a complete metric space equipped with the
distance
\begin{equation}\label{eqD2}
d(F_1,F_2):=\max\limits_{x\in Q}\rho(F_1(x),F_2(x)).\nonumber
\end{equation}

Let $Q$ be a compact subset of $X$. Consider the functional spaces
$C(\mathbb R\times Q,X)$ and $C(\mathbb R,C(Q,X))$. For any $f\in
C(\mathbb R\times Q,X)$ corresponds \cite{Shc_1967} a unique map
$F\in C(\mathbb T,C(Q,X))$ defined by the equality
\begin{equation}\label{eqAA4}
F_{f}(t):=f(t,\cdot). \nonumber
\end{equation}
This means that it is well defined the mapping $h:C(\mathbb
R\times Q,X)\to C(\mathbb R,C(Q,X))$ by the equality
\begin{equation}\label{eqD_3}
h(f)=F_{f}.\nonumber
\end{equation}

The following statement holds.

\begin{theorem}\label{thAA1} \cite{Shc_1967} Let $Q$ be a compact subset of $X$,
then the mapping $h: C(\mathbb R\times Q,X)\to C(\mathbb
R,C(Q,X))$ possesses the following properties:
\begin{enumerate}
\item $h$ is a homeomorphism of the space $C(\mathbb R\times Q,X)$
onto $C(\mathbb R,C(Q,X))$; \item for any $l>0$ and $f,g\in
C(\mathbb R\times Q,X)$ we have
\begin{equation}\label{eqH_1}
\max\limits_{|t|\le l,\ x\in
Q}\rho(f(t,x),g(t,x))=\max\limits_{|t|\le
l}\rho(F_{f}(t),F_{g}(t));\nonumber
\end{equation}
\item $d(f,g)=d(F_{f},F_{g})$ for any $f,g\in C(\mathbb R\times
Q,X)$,
\end{enumerate}
i.e., the mapping $h$ defines an isometry between $C(\mathbb
R\times Q,X)$ and $C(\mathbb R,C(Q,X))$.
\end{theorem}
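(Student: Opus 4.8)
The plan is to establish the four assertions in turn, with the bijectivity of $h$ and the pointwise metric identity (item~(ii)) carrying the essential content; items (i) and (iii) will then follow by formal arguments. First I would verify that $h$ is well defined, i.e. that $F_{f}\in C(\mathbb R,C(Q,X))$ whenever $f\in C(\mathbb R\times Q,X)$. For each fixed $t$ the partial map $f(t,\cdot):Q\to X$ is continuous, so $F_{f}(t)\in C(Q,X)$. For continuity of $t\mapsto F_{f}(t)$ at a point $t_{0}$ I would use that $f$ is uniformly continuous on the compact set $[t_{0}-1,t_{0}+1]\times Q$: given $\varepsilon>0$ there is $\delta\in(0,1)$ with $\rho(f(t,x),f(t_{0},x))<\varepsilon$ for all $x\in Q$ and all $|t-t_{0}|<\delta$, whence $\rho(F_{f}(t),F_{f}(t_{0}))=\max_{x\in Q}\rho(f(t,x),f(t_{0},x))\le\varepsilon$.

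Next I would construct the inverse map. Given $F\in C(\mathbb R,C(Q,X))$ put $g(t,x):=F(t)(x)$; joint continuity of $g$ at $(t_{0},x_{0})$ follows by inserting the intermediate value $F(t_{0})(x)$ and bounding $\rho(F(t)(x),F(t_{0})(x))\le\rho(F(t),F(t_{0}))$, which gives
$$\rho(g(t,x),g(t_{0},x_{0}))\le \rho(F(t),F(t_{0}))+\rho(F(t_{0})(x),F(t_{0})(x_{0})),$$
where the first summand tends to $0$ as $t\to t_{0}$ by continuity of $F$, and the second as $x\to x_{0}$ by continuity of $F(t_{0})\in C(Q,X)$. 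Thus $g\in C(\mathbb R\times Q,X)$, and the assignments $f\mapsto F_{f}$ and $F\mapsto g$ are mutually inverse by construction, so $h$ is a bijection.

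For the metric identity fix $l>0$ and $f,g\in C(\mathbb R\times Q,X)$. The function $(t,x)\mapsto\rho(f(t,x),g(t,x))$ is continuous on the compact set $[-l,l]\times Q$, so all maxima below are attained, and interchanging the two suprema,
$$\max_{|t|\le l}\rho(F_{f}(t),F_{g}(t))=\max_{|t|\le l}\ \max_{x\in Q}\rho(f(t,x),g(t,x))=\max_{|t|\le l,\ x\in Q}\rho(f(t,x),g(t,x)),$$
which is exactly (ii). Substituting (ii) into the definitions of the distances on $C(\mathbb R\times Q,X)$ and on $C(\mathbb R,C(Q,X))$ gives, for every $l>0$, equality of the truncated quantities $\min\{\cdot,l^{-1}\}$, hence $d(f,g)=d(F_{f},F_{g})$ after taking the supremum over $l$; this is (iii), and (i) follows because a bijective isometry of metric spaces is a homeomorphism.

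The only genuinely delicate points are the two continuity verifications — that $F_{f}$ is continuous as a map into $C(Q,X)$, and that the $g$ reconstructed from $F$ is jointly continuous — and both rest on the elementary fact that a continuous function on a product with one compact factor is uniformly continuous in that compact variable; this is precisely where the compactness of $Q$ is used, and everything past it is bookkeeping with suprema.
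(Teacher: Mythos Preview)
Your argument is correct and is the standard one: well-definedness of $h$ via uniform continuity on $[t_{0}-1,t_{0}+1]\times Q$, joint continuity of the inverse via the triangle-inequality splitting, the Fubini-type interchange of suprema for (ii), and then (iii) and (i) as formal consequences. The paper itself does not supply a proof of this theorem; it is quoted with a citation to \cite{Shc_1967}, so there is no in-paper argument to compare against, but your proof is precisely the elementary route one expects and matches the classical treatment.
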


\begin{lemma}\label{lAA0} \cite{Che_2024.2} The following relation
\begin{equation}\label{eqH}
h(\sigma(\tau,f))=\sigma(\tau,h(f))\nonumber
\end{equation}
holds for any $(\tau,f)\in \mathbb R\times C(\mathbb R\times
Q,X)$, i.e., $h$ is an isometric homeomorphism of the dynamical
system $(C(\mathbb R\times Q,X),\mathbb R,\sigma)$ onto
$(C(\mathbb R,C(Q,X)),\mathbb R,\sigma)$.
\end{lemma}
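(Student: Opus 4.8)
The plan is to verify the intertwining relation $h\circ\sigma(\tau,\cdot)=\sigma(\tau,\cdot)\circ h$ by a direct computation from the definitions, and then to read off the "isometric homeomorphism of dynamical systems" conclusion by combining this equivariance with the purely topological content already supplied by Theorem \ref{thAA1}. First I would recall the relevant definitions: $h(f)=F_f$, where $F_f(t):=f(t,\cdot)\in C(Q,X)$ for each $t\in\mathbb R$, and $F_f\in C(\mathbb R,C(Q,X))$ precisely by Theorem \ref{thAA1}; the shift on $C(\mathbb R\times Q,X)$ is $\sigma(\tau,f)=f^{\tau}$ with $f^{\tau}(t,x)=f(t+\tau,x)$, and the shift on $C(\mathbb R,C(Q,X))$ is $\sigma(\tau,F)=F^{\tau}$ with $F^{\tau}(t)=F(t+\tau)$.

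Next I would evaluate both sides of the claimed identity at an arbitrary $t\in\mathbb R$. On the one hand, $h(\sigma(\tau,f))=h(f^{\tau})=F_{f^{\tau}}$, and by definition $F_{f^{\tau}}(t)=f^{\tau}(t,\cdot)=f(t+\tau,\cdot)$. On the other hand, $\sigma(\tau,h(f))=\sigma(\tau,F_f)=(F_f)^{\tau}$, and $(F_f)^{\tau}(t)=F_f(t+\tau)=f(t+\tau,\cdot)$. Hence $F_{f^{\tau}}(t)=(F_f)^{\tau}(t)$ for every $t\in\mathbb R$, so the two elements of $C(\mathbb R,C(Q,X))$ coincide; that is exactly $h(\sigma(\tau,f))=\sigma(\tau,h(f))$ for all $(\tau,f)\in\mathbb R\times C(\mathbb R\times Q,X)$.

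Finally I would assemble the conclusion. By Theorem \ref{thAA1}(i) the map $h$ is a homeomorphism of $C(\mathbb R\times Q,X)$ onto $C(\mathbb R,C(Q,X))$, and by Theorem \ref{thAA1}(iii) it is an isometry; together with the commutation relation just established, $h$ is, by definition, an isometric isomorphism of the dynamical system $(C(\mathbb R\times Q,X),\mathbb R,\sigma)$ onto $(C(\mathbb R,C(Q,X)),\mathbb R,\sigma)$. There is no genuine obstacle here: the argument is a routine unravelling of the definitions of $h$ and of the two translation flows, and all the analytic/topological work — that $h$ is well defined, a homeomorphism, and an isometry — has already been delegated to Theorem \ref{thAA1}, so the present lemma reduces to the elementary fact that $h$ commutes with time translation.
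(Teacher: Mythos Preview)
Your proof is correct; the paper itself does not supply a proof of this lemma but merely cites \cite{Che_2024.2}, so there is nothing to compare against beyond noting that your direct verification from the definitions, combined with the appeal to Theorem~\ref{thAA1} for the isometry and homeomorphism properties, is the standard and essentially unique way to establish this routine equivariance.
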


\begin{definition}\label{defLS2} A function $F\in C(\mathbb R\times
Q,X)$ is said to be Lagrange stable if the set
$\Sigma_{F}:=\{\sigma(\tau,F)|\ \tau\in \mathbb R\}$ is
pre-compact in $C(\mathbb R\times Q,X)$.
\end{definition}

\begin{lemma}\label{lLS2} \cite[Ch.IV]{bro75}, \cite[Ch.III]{Sch72}, \cite{SK_1974}
Assume that $Q$ is a compact subset of $X$. A function $F\in
C(\mathbb R\times Q,X)$ is Lagrange stable if and only if the
following conditions hold:
\begin{enumerate}
\item the set $F(\mathbb R\times Q)$ is pre-compact in $X$; \item
the mapping $F:\mathbb R\times Q\to X$ is uniformly continuous.
\end{enumerate}
\end{lemma}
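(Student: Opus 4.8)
The plan is to transport the problem, via the isometric conjugacy $h$ of Theorem~\ref{thAA1} and Lemma~\ref{lAA0}, to a one-variable setting and then to invoke the classical criterion for Lagrange stability of a curve. Since $h$ is a homeomorphism of the shift system $(C(\mathbb R\times Q,X),\mathbb R,\sigma)$ onto $(C(\mathbb R,C(Q,X)),\mathbb R,\sigma)$ that intertwines the shifts, it maps the trajectory $\Sigma_{F}$ of $F$ homeomorphically onto the trajectory $\Sigma_{h(F)}$ of $h(F)$; hence $F$ is Lagrange stable in the sense of Definition~\ref{defLS2} if and only if $F_{F}:=h(F)$, given by $F_{F}(t)=f(t,\cdot)\in C(Q,X)$, is Lagrange stable in $C(\mathbb R,C(Q,X))$. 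Since $(C(Q,X),d)$ with the metric (\ref{eqD2}) is itself a complete metric space, Lemma~\ref{lAPF02} applies with $C(Q,X)$ in the role of ``$X$'' and shows that $F_{F}$ is Lagrange stable if and only if (i) the set $F_{F}(\mathbb R)=\{f(t,\cdot)\mid t\in\mathbb R\}$ is precompact in $C(Q,X)$, and (ii) the map $F_{F}:\mathbb R\to C(Q,X)$ is uniformly continuous.

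It then remains to show that (i) together with (ii) is equivalent to conditions (1) and (2) of the statement. For the forward implication, uniform continuity of $F$ on $\mathbb R\times Q$ (with the product metric) gives (ii) by restricting to equal space-coordinates, and gives uniform equicontinuity of the family $\{f(t,\cdot)\mid t\in\mathbb R\}$ on $Q$ by restricting to equal time-coordinates; since for each fixed $x\in Q$ the slice $\{f(t,x)\mid t\in\mathbb R\}$ is contained in the precompact set $F(\mathbb R\times Q)$, the Arzel\`a--Ascoli theorem for $C(Q,X)$ yields (i). For the converse implication, precompactness of $F_{F}(\mathbb R)$ in $C(Q,X)$ forces the family $\{f(t,\cdot)\}$ to be equicontinuous on $Q$ (again by Arzel\`a--Ascoli); moreover, by the joint continuity of the evaluation map $C(Q,X)\times Q\to X$, valid because $Q$ is compact, the set $F(\mathbb R\times Q)=\{g(x)\mid g\in F_{F}(\mathbb R),\ x\in Q\}$ lies in the compact image of $\overline{F_{F}(\mathbb R)}\times Q$, which gives (1); finally, combining this equicontinuity with (ii) through the triangle inequality
\[ \rho(f(t,x),f(s,y))\le\rho(f(t,x),f(t,y))+\rho(f(t,y),f(s,y)) \]
produces, for each $\varepsilon>0$, a single $\delta>0$ that controls both summands simultaneously, which is (2).

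I expect the only genuinely substantive step to be the passage between precompactness of $F_{F}(\mathbb R)$ in $C(Q,X)$ on the one hand and, on the other, precompactness of $F(\mathbb R\times Q)$ in $X$ together with equicontinuity of $\{f(t,\cdot)\}$ on $Q$: one direction rests on the Arzel\`a--Ascoli characterisation of relative compactness in $C(Q,X)$ for maps into an arbitrary complete metric space $X$, and the other on the joint continuity of evaluation on $C(Q,X)\times Q$, both of which use the compactness of $Q$ in an essential way. Everything else is routine bookkeeping with the product metric on $\mathbb R\times Q$ and the sup-metric (\ref{eqD2}) on $C(Q,X)$; no further compatibility of metrics has to be checked, since $h$ is an isometry by Theorem~\ref{thAA1}.
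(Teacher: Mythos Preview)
Your argument is correct. The paper itself gives no proof of Lemma~\ref{lLS2}; it simply records the statement with citations to \cite[Ch.IV]{bro75}, \cite[Ch.III]{Sch72} and \cite{SK_1974}. What you have done is to supply a self-contained derivation using only machinery already assembled in the paper: the isometric conjugacy $h$ of Theorem~\ref{thAA1} and Lemma~\ref{lAA0} reduces the two-variable Lagrange stability question to the one-variable criterion of Lemma~\ref{lAPF02} applied with $C(Q,X)$ in the role of the target space, after which Arzel\`a--Ascoli handles the remaining translation between precompactness in $C(Q,X)$ and the pair ``precompact range $+$ equicontinuity'' in $X$. This is a clean and economical route, and arguably more in the spirit of the paper than a bare citation, since it shows the lemma is a formal consequence of Lemma~\ref{lAPF02} once the isometry $h$ is in hand. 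One trivial slip: in your definition of $F_{F}$ you write $f(t,\cdot)$ where you mean $F(t,\cdot)$.
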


\begin{lemma}\label{lLS3} \cite[Ch.III]{Sch72}, \cite{SK_1974}  Let $\varphi\in C(\mathbb
R,X)$be a Lagrange stable function and
$Q:=\overline{\varphi(\mathbb R)}$, where by bar is denoted the
closure in $X$. If the function $F\in C(\mathbb R\times Q,X)$ is
Lagrange stable, then the function $\psi \in C(\mathbb R,X)$
defined by equality $\psi(t):=F(t,\varphi(t))$ for any
$t\in\mathbb R$ is also Lagrange stable.
\end{lemma}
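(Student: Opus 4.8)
The plan is to reduce the claim to the already-established machinery relating Lagrange stability of functions to Lagrange stability of the corresponding motions in the appropriate shift dynamical systems, and then exploit the isometric homeomorphism $h$ from Theorem \ref{thAA1} and Lemma \ref{lAA0}. Concretely, write $Q:=\overline{\varphi(\mathbb R)}$, which is compact in $X$ because $\varphi$ is Lagrange stable (Lemma \ref{lAPF02}). Since $F\in C(\mathbb R\times Q,X)$ is Lagrange stable, by Lemma \ref{lLS2} the set $F(\mathbb R\times Q)$ is precompact in $X$ and $F$ is uniformly continuous on $\mathbb R\times Q$. The goal is to verify the two conditions of Lemma \ref{lAPF02} for $\psi$, namely that $\psi(\mathbb R)$ is precompact in $X$ and that $\psi$ is uniformly continuous on $\mathbb R$.

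First I would check precompactness of $\psi(\mathbb R)$. This is immediate: $\psi(\mathbb R)=\{F(t,\varphi(t))\ |\ t\in\mathbb R\}\subseteq F(\mathbb R\times Q)$, and the latter is precompact by Lemma \ref{lLS2}; a subset of a precompact set is precompact. Second, I would verify uniform continuity of $\psi$. Given $\eps>0$, use uniform continuity of $F$ on $\mathbb R\times Q$ to obtain $\delta_1>0$ such that $\rho(F(t,x),F(s,z))<\eps$ whenever $|t-s|<\delta_1$ and $\rho(x,z)<\delta_1$ with $(t,x),(s,z)\in\mathbb R\times Q$. Use uniform continuity of $\varphi$ on $\mathbb R$ (again Lemma \ref{lAPF02}, since $\varphi$ is Lagrange stable) to obtain $\delta_2>0$ with $\rho(\varphi(t),\varphi(s))<\delta_1$ whenever $|t-s|<\delta_2$. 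Then for $|t-s|<\delta:=\min\{\delta_1,\delta_2\}$ we get $\rho(\psi(t),\psi(s))=\rho(F(t,\varphi(t)),F(s,\varphi(s)))<\eps$, since $\varphi(t),\varphi(s)\in Q$. Hence $\psi$ is uniformly continuous, and by Lemma \ref{lAPF02} the function $\psi$ is Lagrange stable.

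The argument is essentially routine once the characterization of Lagrange stability via precompactness plus uniform continuity (Lemmas \ref{lAPF02} and \ref{lLS2}) is in hand; the only mild subtlety—the main thing to be careful about—is making sure the composed function lands in the correct space, i.e., that $\varphi(t)\in Q$ for all $t$ so that $F(t,\varphi(t))$ is well defined and the uniform-continuity estimate for $F$ on $\mathbb R\times Q$ actually applies. This is guaranteed precisely by the choice $Q=\overline{\varphi(\mathbb R)}$. An alternative, more structural route would be to transport everything through the isometry $h$ of Theorem \ref{thAA1}: Lagrange stability of $F$ is Lagrange stability of the motion $\sigma(\cdot,h(F))$ in $(C(\mathbb R,C(Q,X)),\mathbb R,\sigma)$, and evaluation/composition is continuous, so the image motion generated by $\psi$ is precompact; but the direct verification above is shorter and self-contained, so that is the route I would present.
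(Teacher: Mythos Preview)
Your proof is correct. The paper does not supply its own proof of this lemma; it simply cites \cite[Ch.III]{Sch72} and \cite{SK_1974}. Your direct verification of the two conditions in Lemma~\ref{lAPF02} (precompact range via $\psi(\mathbb R)\subseteq F(\mathbb R\times Q)$ and uniform continuity via the standard $\eps$--$\delta$ composition argument using Lemmas~\ref{lAPF02} and~\ref{lLS2}) is exactly the natural argument and is complete as written.
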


\begin{theorem}\label{thAA2} Let $Q$ be a compact subset of $X$.
Then the following statements are equivalent: \begin{enumerate}
\item the function $f\in C(\mathbb R\times Q,X)$ is remotely
almost periodic (respectively, remotely $\tau$-periodic or
remotely stationary) in $t\in \mathbb R$ uniformly w.r.t. $x\in
Q$; \item the function $F:=h(f)\in C(\mathbb T,C(Q,X))$ is
remotely almost periodic (respectively, remotely $\tau$-periodic
or remotely stationary).
\end{enumerate}
\end{theorem}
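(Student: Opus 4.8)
The plan is to exploit the isometry between the function spaces $C(\mathbb R\times Q,X)$ and $C(\mathbb R,C(Q,X))$ furnished by Theorem~\ref{thAA1} and the fact, recorded in Lemma~\ref{lAA0}, that this homeomorphism $h$ conjugates the two shift dynamical systems. Since remote almost periodicity (and likewise remote $\tau$-periodicity and remote stationarity) of a function is, by Lemma~\ref{lAPF3.1}, equivalent to the corresponding property of the motion it generates in the relevant Bebutov system, and since that property is purely dynamical --- it is defined through the metric and the shift --- it is preserved under an isometric conjugacy. Thus the equivalence in the statement should follow by transporting the dynamical characterization across $h$.

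Concretely, I would proceed as follows. First, recall from Definition~\ref{defD1} that ``$f$ is remotely almost periodic (resp.\ remotely $\tau$-periodic, remotely stationary) in $t$ uniformly w.r.t.\ $x\in Q$'' is \emph{defined} to mean that the motion $\sigma(t,f)$ of $f$ in the dynamical system $(C(\mathbb R\times Q,X),\mathbb R,\sigma)$ has the corresponding property in the sense of Definition~\ref{defSAP1}/\ref{defRAP1}/\ref{defSAP2}; equivalently, by Lemma~\ref{lAPF3.1} applied to the metric space $C(Q,X)$, the function $F=h(f)\in C(\mathbb R,C(Q,X))$ is remotely almost periodic (resp.\ $\tau$-periodic, stationary) iff the motion $\sigma(t,F)$ in $(C(\mathbb R,C(Q,X)),\mathbb R,\sigma)$ has the property. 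So it suffices to show that the motion of $f$ in the first system has the property iff the motion of $F=h(f)$ in the second one does. Second, by Lemma~\ref{lAA0} we have $h(\sigma(\tau,f))=\sigma(\tau,h(f))=\sigma(\tau,F)$ for every $\tau$, and by Theorem~\ref{thAA1}(iii) $h$ is a distance-preserving bijection; hence for all $t,\tau$,
\[
d\bigl(\sigma(t+\tau,f),\sigma(t,f)\bigr)=d\bigl(h(\sigma(t+\tau,f)),h(\sigma(t,f))\bigr)=d\bigl(\sigma(t+\tau,F),\sigma(t,F)\bigr).
\]
Third, reading off Definitions~\ref{defSAP1}, \ref{defRAP1} and \ref{defSAP2}, each of the three properties ``remotely $\tau$-periodic'', ``remotely almost periodic'' (with its relatively dense set $\mathcal P(\varepsilon,\cdot)$ and threshold $L(\varepsilon,\cdot,\tau)$) and ``remotely stationary'' is stated entirely in terms of the quantity $d(\sigma(t+\tau,\cdot),\sigma(t,\cdot))$; by the displayed equality this quantity is literally the same for the motion of $f$ and the motion of $F$. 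Therefore the set $\mathcal P(\varepsilon,f,Q)$ may be taken equal to $\mathcal P(\varepsilon,F)$ and the thresholds coincide, which yields both implications at once. Finally, one reassembles: $f$ has the uniform property $\iff$ $\sigma(t,f)$ has it $\iff$ $\sigma(t,F)$ has it $\iff$ $F$ has it.

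The only point that requires a little care --- and the nearest thing to an obstacle --- is making sure that the ``threshold'' form of remote almost periodicity in Definition~\ref{defAA1} (which asks $\sup_{x\in W}\rho(f(t+\tau,x),f(t,x))<\varepsilon$ for $|t|\ge L$, i.e.\ a condition on the values of $f$ on $Q$) matches the ``dynamical'' form (a condition on the shifts $\sigma(t,f)$ in the compact-open metric $d$). This is exactly the content of Theorem~\ref{thAA1}(ii), which identifies $\max_{|t|\le l,\,x\in Q}\rho(f(t,x),g(t,x))$ with $\max_{|t|\le l}\rho(F_f(t),F_g(t))$, together with Lemma~\ref{lAPF_3} (the equivalence of the pointwise limit and the Bebutov-metric limit); invoking these, the passage from the uniform-in-$x$ estimate to the statement about motions is immediate and symmetric. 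With that identification in hand the rest of the argument is the routine transport-of-structure just described, and no further estimates are needed.
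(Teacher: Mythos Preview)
Your proposal is correct and follows essentially the same route as the paper: the paper's own proof is the single sentence ``This statement follows directly from the corresponding definitions, Theorem~\ref{thAA1} and Lemma~\ref{lAA0},'' and your argument is precisely an unpacking of that sentence via the isometric conjugacy $h$. The only minor slip is that the relevant \emph{explicit} uniform-in-$x$ definition here is Definition~\ref{defAA1} rather than Definition~\ref{defD1}, but you already flag and resolve that discrepancy in your last paragraph using Theorem~\ref{thAA1}(ii) and Lemma~\ref{lAPF_3}.
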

\begin{proof} This
statement follows directly from the corresponding definitions,
Theorem \ref{thAA1} and Lemma \ref{lAA0}.
\end{proof}

In this section we establish the conditions, under which the
existence of a compatible in the limit solution of a semi-linear
equation with hyperbolic linear part.

Denote by $\Delta$ the family of all increasing (nondecreasing)
functions $L :\mathbb R_{+}\to\mathbb R_{+}$ with
$\lim\limits_{r\to 0^{+}}L(r)=0$.

\begin{definition}\label{defLE1} A function $F\in C(\mathbb T\times\mathfrak B,\mathfrak
B)$ is said to be
\begin{enumerate}
\item local Lipschitzian with respect to variable $u\in \mathfrak
B$ on $\mathfrak B$ uniformly with respect to $t\in \mathbb T$, if
there exists a function $L:\mathbb R_{+}\to \mathbb R_{+}$ such
that
\begin{equation}\label{eqLE_01}
|F(t,u_1)-F(t,u_2)|\le L(r)|u_1-u_2|
\end{equation}
for all $u_1,u_2\in B[0,r]$ and $t\in\mathbb R$, where
$B[0,r]:=\{u\in \mathfrak B:\ |u|\le r\}$; \item (global)
Lipschitzian with respect to variable $u\in \mathfrak B$ on
$\mathfrak B$ uniformly with respect to $t\in \mathbb R$, if there
exists a positive constant $L$ such that
\begin{equation}\label{eqLE1.1}
|F(t,u_1)-F(t,u_2)|\le L|u_1-u_2|
\end{equation}
for all $u_1,u_2\in \mathfrak B$ and $t\in\mathbb R$.
\end{enumerate}
\end{definition}

\begin{definition}\label{defLE2} The smallest constant figuring in
(\ref{eqLE_01}) (respectively, in (\ref{eqLE1.1})) is called
Lipschitz constant of function $F$ on $\mathbb R\times B[0,r]$
(respectively, on $\mathbb R\times \mathfrak B$). Notation
$Lip(r,F)$ (respectively, $Lip(F)$).
\end{definition}

\begin{remark}\label{remL1} Note the function $Lip(\cdot,F):\mathbb R_{+}\to \mathbb
R_{+}$ is nondecreasing, that is, for all $r_1,r_2\in \mathbb
R_{+}$ such that $r_1\le r_2$ one has $Lip(r_1,F)\le Lip(r_2,F)$.
\end{remark}

\begin{lemma}\label{lLC1} Assume that
$F\in C(\mathbb R\times \mathfrak B,\mathfrak B)$ and it is local
(respectively, global) Lipschitzian with respect to variable $u\in
\mathfrak B$ on $\mathfrak B$ uniformly with respect to $t\in
\mathbb R$, i.e., there exists a function $L:\mathbb R_{+}\to
\mathbb R_{+}$ (respectively, a positive constant $L$) such that
(\ref{eqLE_01}) holds (respectively, (\ref{eqLE1.1})) for all
$u_1,u_2\in \mathfrak B$ and $t\in\mathbb R$.

Then every function $G\in H(F):=\overline{\{\sigma(\tau,F)|\ \tau
\in \mathbb R\}}$ is local (respectively, global) Lipschitzian
with respect to variable $u\in \mathfrak B$ on $\mathfrak B$
uniformly with respect to $t\in \mathbb R$ with the same function
$L(\cdot,F):\mathbb R_{+}\to \mathbb R_{+}$ (respectively, the
same positive constant $L$).
\end{lemma}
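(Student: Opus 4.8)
The plan is to show that the Lipschitz estimate passes to any limit function $G\in H(F)$ by a standard approximation/limiting argument. First I would recall that, by definition of $H(F)$, there is a sequence $\tau_n\in\mathbb R$ such that $\sigma(\tau_n,F)=F^{\tau_n}\to G$ in the compact-open topology of $C(\mathbb R\times\mathfrak B,\mathfrak B)$; equivalently, $F(t+\tau_n,u)\to G(t,u)$ uniformly on every compact subset of $\mathbb R\times\mathfrak B$. Fix $r>0$ and two points $u_1,u_2\in B[0,r]$ (in the global case, fix arbitrary $u_1,u_2\in\mathfrak B$ and replace $B[0,r]$ by $\mathfrak B$, $L(r)$ by $L$ throughout). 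For each $n$ and each $t\in\mathbb R$, the hypothesis (\ref{eqLE_01}) applied to $F$ at the time $t+\tau_n$ gives
\begin{equation*}
|F(t+\tau_n,u_1)-F(t+\tau_n,u_2)|\le L(r)\,|u_1-u_2| .
\end{equation*}

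The key step is to pass to the limit $n\to\infty$ in this inequality. Since $\{t\}\times\{u_1,u_2\}$ is a compact subset of $\mathbb R\times\mathfrak B$, the convergence $F^{\tau_n}\to G$ gives $F(t+\tau_n,u_i)\to G(t,u_i)$ for $i=1,2$, hence $F(t+\tau_n,u_1)-F(t+\tau_n,u_2)\to G(t,u_1)-G(t,u_2)$, and by continuity of the norm the left-hand side converges to $|G(t,u_1)-G(t,u_2)|$. The right-hand side does not depend on $n$, so the inequality is preserved in the limit:
\begin{equation*}
|G(t,u_1)-G(t,u_2)|\le L(r)\,|u_1-u_2| .
\end{equation*}
As $t\in\mathbb R$ and $u_1,u_2\in B[0,r]$ were arbitrary, this is exactly the statement that $G$ is local Lipschitzian on $\mathfrak B$ uniformly in $t$ with the same function $L(\cdot,F)$; in particular $Lip(r,G)\le Lip(r,F)$. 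The global case is identical, replacing $L(r)$ by the constant $L$ and $B[0,r]$ by $\mathfrak B$.

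Finally, one should note $G\in C(\mathbb R\times\mathfrak B,\mathfrak B)$ a priori, since $H(F)$ is by definition a subset of that space (being the closure of the orbit of $F$), so there is no separate regularity issue. The only mild subtlety, and the part deserving a line of care, is making precise that convergence in the compact-open topology yields pointwise convergence at the \emph{single} fixed point $(t,u_i)$ — this is immediate once one observes a singleton is compact — and, in the reverse direction used for $Lip(r,G)\le Lip(r,F)$, that the infimum defining $Lip(r,\cdot)$ is attained as the best constant, so the displayed inequality with constant $L(r)=Lip(r,F)$ already forces $Lip(r,G)\le Lip(r,F)$. No estimate in the other direction is claimed or needed. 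This completes the argument.
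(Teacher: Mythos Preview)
Your argument is correct; the paper's own proof consists of the single sentence ``This statement is evident,'' so you have simply written out the standard limiting argument that the author leaves to the reader. There is nothing to compare: your approach is the natural (and essentially only) way to justify the claim.
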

\begin{proof} This statement is evident.
\end{proof}

Consider a differential equation
\begin{equation}\label{eqSL1}
u'=A(t)u + F(t,u)
\end{equation}
in the Banach space $\mathfrak B$, where $A\in C(\mathbb
R,\mathfrak B)$ and $F$ is a nonlinear continuous mapping ("small"
perturbation) acting from $\mathbb R\times \mathfrak B$ into
$\mathfrak B$.

Along with the equation (\ref{eqSL1}) we consider irs $H$-class,
i.e., the family of equations
\begin{equation}\label{eqSL1.001}
v'=B(t)v+G(t,v)\ \ ((B,G)\in H(A,F)),
\end{equation}
where $H(A,F):=\overline{\{(A^{\tau},F^{\tau})|\ \tau \in \mathbb
R\}}$ and by bar the closure in the product space $C(\mathbb
R,[\mathfrak B])\times C(\mathbb R\times \mathfrak B,\mathfrak B)$
is denoted.

\begin{definition}\label{defSL01}
A function $F\in C(\mathbb R\times \mathfrak B,\mathfrak B)$ is
said to be regular, if for any $v\in \mathfrak B$ and $(B,G)\in
H(A,F)$ there exists a unique solution $\varphi(t,v,B,F)$ of the
equation (\ref{eqSL1.001}) passing through the point $v$ at the
initial moment $t=0$ and defined on $\mathbb R_{+}$.
\end{definition}

Below everywhere in this Section we suppose that the function
$F\in C(\mathbb R\times \mathfrak B,\mathfrak B)$ is regular.

\begin{remark}\label{remRF1} If the function $F\in C(\mathbb R\times \mathfrak B,\mathfrak
B)$ is global Lipschitzian and $\sup\limits_{t\in \mathbb
R}|F(t,0)|<+\infty$, then it is regular (see Theorem 6.6.9 from
\cite[Ch.VI]{Che_2020}).
\end{remark}

Let $\mathfrak{L}$ some family of sequences $\{t_k\}\to \infty$
(i.e., $t_k\to +\infty$ or $-\infty$) as $k\to \infty$ and $r>0$.
Denote by
\begin{enumerate}
\item[-] $C_{r}(\mathfrak{L}):= \{\varphi : \varphi\in
C_{b}(\mathbb{R},\mathfrak B),\ \mathfrak{L}\subseteq
\mathfrak{L}_{\varphi}\ \mbox{and}\ ||\varphi||\leq r \}$;
\item[-] $BC(\mathfrak L):=\{\varphi \in C_{b}(\mathbb R,\mathfrak
B):\ \mathfrak L\subseteq \mathfrak L_{\varphi}\}$; \item[-]
$\mathcal L (\mathfrak L):=\{\varphi \in \mathcal L(\mathbb
R,\mathfrak B):\ \mathfrak L \subseteq \mathfrak L_{\varphi}\}$;
\item[-] $\mathcal L_{r}(\mathfrak L):=\{\varphi \in \mathcal L
(\mathfrak L):\ \mbox{and}\ ||\varphi||\leq r \}$.
\end{enumerate}

\begin{lemma}\label{l3.4.1} \cite[Ch.III]{Che_2009}, \cite{Sch72}
$C_{r}(\mathfrak{L})$ (respectively, $C_{b}(\mathfrak L)$ or
$\mathcal L(\mathfrak L)$) is a closed subspace of the metric
space $C_b(\mathbb{R},\mathfrak B)$.
\end{lemma}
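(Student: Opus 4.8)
The plan is to reduce all three assertions to a single statement: the ``comparability at infinity'' condition $\mathfrak L\subseteq\mathfrak L_\varphi$ defines a closed subset of $C_b(\mathbb R,\mathfrak B)$. The norm constraint $\|\varphi\|\le r$ is obviously a closed condition, so $C_r(\mathfrak L)$ is the intersection of the closed ball of radius $r$ with $BC(\mathfrak L)$, and $C_b(\mathfrak L)$ is just $BC(\mathfrak L)$. For $\mathcal L(\mathfrak L)=\mathcal L(\mathbb R,\mathfrak B)\cap BC(\mathfrak L)$ I would in addition observe that $\mathcal L(\mathbb R,\mathfrak B)$ is itself closed in $C_b(\mathbb R,\mathfrak B)$: by Lemma~\ref{lAPF02} a function is Lagrange stable exactly when it is uniformly continuous and has precompact range, and both properties are inherited by uniform limits (a uniform limit of uniformly continuous maps is uniformly continuous, and a uniform limit of maps with totally bounded range has totally bounded, hence precompact, range since $\mathfrak B$ is complete). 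Thus everything follows once the boxed fact about $BC(\mathfrak L)$ is established.

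To prove that $BC(\mathfrak L)$ is closed I would take $\varphi_k\to\varphi$ in $C_b(\mathbb R,\mathfrak B)$ with each $\varphi_k\in BC(\mathfrak L)$, fix an arbitrary $\{t_n\}\in\mathfrak L$, and show the translates $\{\varphi^{t_n}\}$ converge in $C(\mathbb R,\mathfrak B)$. Since $C(\mathbb R,\mathfrak B)$ with its metric of uniform convergence on compact subsets is complete, it suffices to check that $\{\varphi^{t_n}\}$ is Cauchy there, i.e. that for each $L>0$ and $\varepsilon>0$ we have $\max_{|t|\le L}|\varphi^{t_n}(t)-\varphi^{t_m}(t)|<\varepsilon$ for all large $m,n$. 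I would use the standard $\varepsilon/3$ split: choose $k$ with $\|\varphi_k-\varphi\|<\varepsilon/3$; because translation by a real number preserves the supremum norm over $\mathbb R$, this gives $\max_{|t|\le L}|\varphi^{t_n}(t)-\varphi_k^{t_n}(t)|\le\|\varphi_k-\varphi\|<\varepsilon/3$ \emph{uniformly in $n$}; and since $\{t_n\}\in\mathfrak L\subseteq\mathfrak L_{\varphi_k}$, the sequence $\{\varphi_k^{t_n}\}$ converges in $C(\mathbb R,\mathfrak B)$, hence is Cauchy on $[-L,L]$, which handles the middle term. The triangle inequality then closes the estimate, so $\{\varphi^{t_n}\}$ converges, $\{t_n\}\in\mathfrak L_\varphi$, and since $\{t_n\}$ was arbitrary, $\varphi\in BC(\mathfrak L)$.

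The only point that needs care — and the only candidate for an ``obstacle'' — is the uniformity in $n$ of the first of the three estimates. It works precisely because convergence in the ambient space $C_b(\mathbb R,\mathfrak B)$ is uniform on the whole line and is therefore unaffected by the shifts $t\mapsto t+t_n$, while membership in $\mathfrak L_\varphi$ only concerns the weaker topology of uniform convergence on compacta. Once this is noticed the remainder is routine bookkeeping, and combining the boxed fact with the first paragraph yields closedness of $C_r(\mathfrak L)$, of $C_b(\mathfrak L)=BC(\mathfrak L)$, and of $\mathcal L(\mathfrak L)$. I expect no genuine difficulty; the proof is short.
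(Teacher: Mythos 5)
Your argument is correct. Note that the paper itself offers no proof of this lemma: it is stated with references to \cite[Ch.III]{Che_2009} and \cite{Sch72}, so there is no in-text argument to compare yours against. Your reduction is the natural one: the ball $\{\|\varphi\|\le r\}$ is closed in the uniform norm, the set of Lagrange stable functions is closed by Lemma~\ref{lAPF02} (uniform limits preserve uniform continuity and total boundedness of the range, and $\mathfrak B$ is complete), so everything rests on closedness of $BC(\mathfrak L)$. Your $\varepsilon/3$ argument for that is sound, and you correctly isolate the one point that matters: the estimate $\sup_{t}|\varphi^{t_n}(t)-\varphi_k^{t_n}(t)|\le\|\varphi-\varphi_k\|$ is uniform in $n$ because translation is an isometry for the sup norm on all of $\mathbb R$, whereas membership in $\mathfrak L_{\varphi_k}$ only involves the weaker compact-open convergence, for which completeness of $(C(\mathbb R,\mathfrak B),\beta)$ is available. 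The only cosmetic remark is that ``closed subspace'' here must be read as ``closed subset'' (the sets $C_r(\mathfrak L)$ are not linear), and that $C_b(\mathfrak L)$ in the statement is the set denoted $BC(\mathfrak L)$ in the definitions, both of which you handled correctly.
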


\begin{lemma}\label{l3.4.2} \cite{Che_2024.2} Let $\varphi\in \mathcal L(\mathbb{R},\mathfrak B)$
(respectively, $\varphi \in C_{b}(\mathbb R,\mathfrak B)$), $F\in
\mathcal L(\mathbb{R}\times \mathfrak B,\mathfrak B)$
(respectively, $F\in C_{b}(\mathbb R\times \mathfrak B,\mathfrak
B)$), $g(t):=F(t,\varphi(t))$ for all $t\in\mathbb{R}$ and
$F_Q:=F|_{\mathbb{R}\times Q}$, where
$Q:=\overline{\varphi(\mathbb{R})}$. If $F_Q$ satisfies the
condition of Lipschitz with respect to the second variable with
the constant $L>0$, then
\begin{enumerate}
\item $\mathfrak L_{(F_{Q},\varphi)}\subseteq \mathfrak L_{g}$;
\item if $\{t_n\}\in \mathfrak L_{(F_{Q},\varphi)}$ and
$$
(\tilde{\varphi},\tilde{F})=\lim\limits_{n\to
\infty}(\varphi^{t_n},F_{Q}^{t_n}),
$$
then $\lim\limits_{n\to \infty}g^{t_n}=\tilde{g}$, where
$\tilde{g}(t):=\tilde{F}(t,\tilde{\varphi}(t))$ for any $t\in
\mathbb R$; \item $g\in \mathcal L(\mathbb R,\mathfrak B)$
(respectively, $g\in C_{b}(\mathbb R,\mathfrak B)$).
\end{enumerate}
\end{lemma}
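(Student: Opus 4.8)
The plan is to establish the three assertions in the order (2) $\Rightarrow$ (1), and then (3), since statement (2) carries the only genuine estimate while (1) and (3) follow from it essentially formally.

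First I would prove (2). Fix $\{t_n\}\in\mathfrak L_{(F_{Q},\varphi)}$ and set $(\tilde\varphi,\tilde F):=\lim_{n\to\infty}(\varphi^{t_n},F_{Q}^{t_n})$, the limit being taken in $C(\mathbb R,\mathfrak B)\times C(\mathbb R\times Q,\mathfrak B)$. The key remark is that for every $t\in\mathbb R$ one has $\varphi^{t_n}(t)=\varphi(t+t_n)\in\varphi(\mathbb R)\subseteq Q$, and therefore, passing to the limit, $\tilde\varphi(t)\in\overline{\varphi(\mathbb R)}=Q$; hence the quantities $F_{Q}^{t_n}(t,\varphi^{t_n}(t))$ and $\tilde F(t,\tilde\varphi(t))$ make sense, and moreover $g^{t_n}(t)=F(t+t_n,\varphi(t+t_n))=F_{Q}^{t_n}(t,\varphi^{t_n}(t))$. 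Next, for any $l>0$ and $|t|\le l$ I would estimate
\begin{align*}
|g^{t_n}(t)-\tilde g(t)| & \le |F_{Q}^{t_n}(t,\varphi^{t_n}(t))-F_{Q}^{t_n}(t,\tilde\varphi(t))| + |F_{Q}^{t_n}(t,\tilde\varphi(t))-\tilde F(t,\tilde\varphi(t))| \\
& \le L\,|\varphi^{t_n}(t)-\tilde\varphi(t)| + \max_{|s|\le l,\ x\in Q}|F_{Q}^{t_n}(s,x)-\tilde F(s,x)|,
\end{align*}
where the first summand is controlled because every translate $F_{Q}^{t_n}$ satisfies the Lipschitz inequality in the second variable with the same constant $L$ (the argument of Lemma \ref{lLC1}, applied to the restriction $F_{Q}$). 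Taking $\max_{|t|\le l}$ and letting $n\to\infty$: the first term tends to $0$ since $\varphi^{t_n}\to\tilde\varphi$ uniformly on $[-l,l]$, and the second since $Q$ is compact and $F_{Q}^{t_n}\to\tilde F$ in $C(\mathbb R\times Q,\mathfrak B)$, i.e.\ uniformly on $[-l,l]\times Q$. Hence $g^{t_n}\to\tilde g$ in $C(\mathbb R,\mathfrak B)$ (note $\tilde g$ is continuous, being a composition of the continuous map $\tilde F$ with $t\mapsto(t,\tilde\varphi(t))$), which is (2).

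Assertion (1) is then immediate: if $\{t_n\}\in\mathfrak L_{(F_{Q},\varphi)}$, then by (2) the sequence $\{g^{t_n}\}$ converges in $C(\mathbb R,\mathfrak B)$, so $\{t_n\}\in\mathfrak L_{g}$. For (3) I would invoke Lemma \ref{lLS3}: the hypothesis on $\varphi$ makes $Q=\overline{\varphi(\mathbb R)}$ compact (resp.\ bounded), and by Lemma \ref{lLS2} the restriction $F_{Q}\in C(\mathbb R\times Q,\mathfrak B)$ inherits Lagrange stability (resp.\ boundedness) from $F$; hence $g(t)=F_{Q}(t,\varphi(t))$ is Lagrange stable (resp.\ bounded, since $\varphi(\mathbb R)$ lies in a ball on which $F$ is bounded). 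Alternatively, (3) can be checked directly from Lemma \ref{lAPF02} by verifying that $g(\mathbb R)\subseteq F_{Q}(\mathbb R\times Q)$ is precompact and that $g$ is uniformly continuous, using uniform continuity of $F_{Q}$ and of $\varphi$ together with the Lipschitz bound.

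The one place that needs care — and hence the only (mild) obstacle — is handling the two convergences $\varphi^{t_n}\to\tilde\varphi$ and $F_{Q}^{t_n}\to\tilde F$ simultaneously while making sure all the evaluation points $(t,\varphi^{t_n}(t))$ and $(t,\tilde\varphi(t))$ stay inside $\mathbb R\times Q$, the set on which $F_{Q}$ and its translates are uniformly controlled; once this is in place, the Lipschitz estimate reduces the passage to the limit to a routine $\varepsilon$-argument.
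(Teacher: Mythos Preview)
The paper does not give its own proof of this lemma; it merely cites \cite{Che_2024.2}. Your argument is correct and is the standard one: the splitting
\[
|g^{t_n}(t)-\tilde g(t)|\le L\,|\varphi^{t_n}(t)-\tilde\varphi(t)|+|F_Q^{t_n}(t,\tilde\varphi(t))-\tilde F(t,\tilde\varphi(t))|
\]
together with $\tilde\varphi(\mathbb R)\subseteq Q$ is exactly what is needed for (2), and (1), (3) follow as you describe. One small comment: in the bounded (non--Lagrange-stable) case $Q$ need not be compact, so the second term should be controlled using the compact set $\tilde\varphi([-l,l])\subset Q$ rather than all of $Q$; with that adjustment the compact-open convergence $F_Q^{t_n}\to\tilde F$ still does the job. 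Also, for (3) it is slightly cleaner to note that the restriction map $C(\mathbb R\times\mathfrak B,\mathfrak B)\to C(\mathbb R\times Q,\mathfrak B)$ is continuous in the compact-open topology, which immediately transfers Lagrange stability from $F$ to $F_Q$, rather than going through Lemma~\ref{lLS2}.
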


Let us consider a differential equation
\begin{equation}\label{eqSL01}
u'=A(t)u + f(t) +F(t,u),
\end{equation}
where $F\in C(\mathbb{R}\times \mathfrak B,\mathfrak B)$ and
$F(t,0)=0$ for any $t\in \mathbb R$.

We put $\mathfrak B_{-}:=P(\mathfrak B)$, where $P$ is a
projection figuring in the Definition \ref{defED1}.

Assume that $(A,f)\in C(\mathbb R,[\mathfrak B])\times C(\mathbb
R,\mathfrak B)$ is Lagrange stable and $\varphi_{0}$ is a remotely
compatible solution of the equation
\begin{equation}\label{eqSL1.1}
u'=A(t)u + f(t)
\end{equation}
defined by equality (\ref{eqF1}). Denote by
$Q:=\overline{\varphi_{0}(\mathbb{R})}$ the compact subset of
$\mathfrak B$ and by $Q_{r}:=\{x\in \mathfrak B|\ \rho(x,Q)\le
r\}$, where $\rho(x,Q):=\inf\{|x-q|:\ q\in Q\}$ is the
neighborhood of the set $Q\subset \mathfrak B$ of radius $r>0$.

\begin{definition}\label{defB1.0} A function $F\in C(\mathbb R\times \mathfrak B,\mathfrak
B)$ is said to be bounded (respectively, compact) if for any
bounded subset $B$ (respectively, compact subset $Q$) from
$\mathfrak B$ the set $F(\mathbb R\times B):=\{F(t,x)|\ (t,x)\in
\mathbb R\times B\}$ (respectively, $F(\mathbb R\times Q)$) is
bounded (respectively, pre-compact) in $\mathfrak B$.
\end{definition}

Let $\varphi\in C(\mathbb R,\mathfrak B)$ be a compact solution of
the equation (\ref{eqSL01}).

\begin{definition}\label{defSL1} A solution $\varphi$ of the equation (\ref{eqSL01}) is said to
be remotely compatible by the character of recurrence (shortly
remotely compatible) if $\mathfrak L_{(f,F_{Q})}\subseteq
\mathfrak L_{\varphi}$, where $Q:=\overline{\varphi(\mathbb R)}$
and $F_{Q}:=F\big{|}_{\mathbb R\times Q}$.
\end{definition}

\begin{theorem}\label{thSL0.1} Assume that the following conditions are
fulfilled:
\begin{enumerate}
\item $\varphi$ is a compact solution of the equation
(\ref{eqSL01}); \item the functions $f$ and $F_{Q}$ remotely
stationary (respectively, remotely $\tau$-pe\-ri\-o\-dic, remotely
almost periodic); \item $\varphi$ is a remotely compatible
solution of the equation (\ref{eqSL01}).
\end{enumerate}

Then the solution $\varphi$ is also remotely stationary
(respectively, remotely $\tau$-periodic, remotely almost
periodic).
\end{theorem}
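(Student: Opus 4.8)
The plan is to reduce Theorem \ref{thSL0.1} to the comparability result Theorem \ref{thRAPF4} by exhibiting the solution $\varphi$ as a point whose recurrence is subordinated to that of the coefficient pair $(f,F_Q)$. First I would pass to the shift dynamical system on the space $C(\mathbb R,\mathfrak B)\times C(\mathbb R,[\mathfrak B])\times C(\mathbb R\times Q,\mathfrak B)$ (using the isometry $h$ of Theorem \ref{thAA1} and Lemma \ref{lAA0} to view $F_Q$ as a function with values in $C(Q,\mathfrak B)$, so that remote almost periodicity of $F_Q$ in $t$ uniformly on $Q$ is, by Theorem \ref{thAA2}, the same as remote almost periodicity of $h(F_Q)$). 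Since $(f,F_Q)$ is remotely stationary (respectively remotely $\tau$-periodic, remotely almost periodic) and, being a uniformly continuous function with precompact range — here I would invoke Lemma \ref{lAPF02} and Lemma \ref{lLS2} together with the compactness of $Q$ and hypothesis (2) — it is Lagrange stable, Theorem \ref{thRAPF4} applies to $y:=(f,F_Q)$ provided we know that $\varphi$ is positively remotely comparable by the character of recurrence with $y$, i.e. $\mathfrak L_{(f,F_Q)}\subseteq \mathfrak L_{\varphi}$.

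The second step is to observe that this inclusion is precisely hypothesis (3): $\varphi$ is a remotely compatible solution of (\ref{eqSL01}), which by Definition \ref{defSL1} means $\mathfrak L_{(f,F_Q)}\subseteq \mathfrak L_{\varphi}$. Thus the hypotheses of Theorem \ref{thRAPF4} are met with $x:=\varphi$, $y:=(f,F_Q)$, and we conclude that $\varphi$ is remotely stationary (respectively remotely $\tau$-periodic, remotely almost periodic). One should also check that $\varphi$ itself is Lagrange stable, which is needed implicitly: $\varphi$ is a compact solution by hypothesis (1), so its range is precompact, and from the equation $\varphi' = A(t)\varphi(t)+f(t)+F(t,\varphi(t))$ together with boundedness of $A$, $f$ (Lagrange stability gives boundedness) and compactness of $F$ on $\mathbb R\times Q$, the derivative $\varphi'$ is bounded, hence $\varphi$ is uniformly continuous; by Lemma \ref{lAPF02} this gives Lagrange stability. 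Actually for the conclusion via Theorem \ref{thRAPF4} only the recurrence of $y$ and the comparability are used, so Lagrange stability of $\varphi$ comes out automatically once $\varphi$ inherits the recurrence, but it is cleaner to note it up front.

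The main obstacle I anticipate is the bookkeeping around the mixed function space: one must be careful that "remotely comparable by the character of recurrence" for the pair $(f,F_Q)$ is formulated for a genuine point in a Bebutov system, and that the limit-set inclusion $\mathfrak L_{(f,F_Q)}\subseteq \mathfrak L_\varphi$ does transfer through the homeomorphism $h$ (Lemma \ref{lAA0}) so that convergence of $\{F_Q^{t_n}\}$ in $C(\mathbb R\times Q,\mathfrak B)$ matches convergence of $\{(h(F_Q))^{t_n}\}$ in $C(\mathbb R, C(Q,\mathfrak B))$ — this is exactly the content of Theorem \ref{thAA1}(iii) and the isometry statement. A secondary subtlety is that Theorem \ref{thRAPF4} as stated deals with a single pair $(X,\mathbb T,\pi)$, $(Y,\mathbb T,\sigma)$, so I would take $Y$ to be the product space carrying $(f, F_Q)$ and $X = C(\mathbb R,\mathfrak B)$ carrying $\varphi$; the comparability hypothesis then reads verbatim as Definition \ref{defSL1}. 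With these identifications made precise, the proof is a one-line application of Theorem \ref{thRAPF4}, and the remaining work is purely the verification that the relevant objects are Lagrange stable, which follows from Lemma \ref{lAPF02}, Lemma \ref{lLS2}, Lemma \ref{lLS3} and the standing regularity/compactness assumptions.
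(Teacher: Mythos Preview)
Your proposal is correct and follows essentially the same route as the paper: the paper's one-line proof (``directly follows from Definition \ref{defSL1} and Theorem \ref{thRAPDE1}'') is really an appeal to the comparability principle --- remote compatibility $\mathfrak L_{(f,F_Q)}\subseteq \mathfrak L_{\varphi}$ plus the assumed recurrence of $(f,F_Q)$ forces the same recurrence on $\varphi$ via Theorem \ref{thRAP4}/\ref{thRAPF4} --- and your argument spells out precisely this mechanism (the citation of Theorem \ref{thRAPDE1} in the paper is almost certainly a misprint for Theorem \ref{thRAPF4}, since the former concerns existence for the linear equation and plays no role here). Your additional bookkeeping (Lagrange stability of $(f,F_Q)$ via Lemmas \ref{lAPF02}, \ref{lLS2} and the isometry $h$ of Theorem \ref{thAA1}/Lemma \ref{lAA0}) just makes explicit what the paper takes for granted.
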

\begin{proof} This statement directly follows from the Definition
\ref{defSL1} and Theorem \ref{thRAPDE1}.
\end{proof}

\begin{theorem}\label{th4A}
Let $f\in C(\mathbb{R},\mathfrak B)$, $F\in C(\mathbb{R}\times
\mathfrak B,\mathfrak B)$ and $A\in (\mathbb R,[\mathfrak B])$.

Assume that the following conditions are fulfilled:
\begin{enumerate}
\item[$1)$] the linear equation (\ref{eqLE1}) has the exponential
trichotomy on $\mathbb R$; \item[$2)$] the functions $A,f$ and $F$
are Lagrange stable; \item[$3)$] $F$ satisfies the condition of
Lipschitz with respect to $x\in \mathfrak B$ with the constant of
Lipschitz $L<\frac{\nu}{2\mathcal N}.$
\end{enumerate}

Then
\begin{enumerate}
\item the equation $(\ref{eqSL01})$ has at least one Lagrange
stable solution $\varphi$ defined by equality
\begin{equation}\label{eqBF0}
\varphi(t)=\int_{-\infty}^{+\infty}G_{A}(t,\tau)(f(\tau)+F(\tau,\varphi(\tau)))d\tau
\nonumber
\end{equation}
and
\item
\begin{equation}\label{eqBF1}
\|\varphi -\varphi_{0}\|\le r ,
\end{equation}
where $$r:=\frac{4\mathcal N^2L\|f\|}{\nu (\nu -2\mathcal N L)}.$$
\end{enumerate}
\end{theorem}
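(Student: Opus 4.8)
The plan is to solve the integral equation
\begin{equation}\label{eqBF_plan}
\varphi(t)=\int_{-\infty}^{+\infty}G_{A}(t,\tau)\bigl(f(\tau)+F(\tau,\varphi(\tau))\bigr)d\tau\nonumber
\end{equation}
by a contraction mapping argument in the closed ball of radius $r$ around $\varphi_{0}$ inside the space $C_{b}(\mathbb R,\mathfrak B)$, and then to identify the fixed point with a Lagrange stable solution of \eqref{eqSL01}. First I would recall from Lemma \ref{lET_1} that the Green operator $\mathbb G_{A}$ of the trichotomic equation \eqref{eqLE1} is a bounded linear operator on $C_{b}(\mathbb R,\mathfrak B)$ with $\|\mathbb G_{A}\|\le \frac{2\mathcal N}{\nu}$, and that $\varphi_{0}=\mathbb G_{A}f$ is the (unique, two-sided remotely compatible) bounded solution of the linear equation \eqref{eqSL1.1}. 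For $\psi\in C_{b}(\mathbb R,\mathfrak B)$ define $(\Phi\psi)(t):=\int_{-\infty}^{+\infty}G_{A}(t,\tau)\bigl(f(\tau)+F(\tau,\psi(\tau))\bigr)d\tau$; since $F$ is globally Lipschitz with $F(t,0)=0$, the map $t\mapsto F(t,\psi(t))$ lies in $C_{b}(\mathbb R,\mathfrak B)$ and hence $\Phi$ maps $C_{b}(\mathbb R,\mathfrak B)$ into itself, with $\Phi\psi=\varphi_{0}+\mathbb G_{A}\bigl(F(\cdot,\psi(\cdot))\bigr)$.

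Next I would verify the contraction and self-map properties. For $\psi_{1},\psi_{2}\in C_{b}(\mathbb R,\mathfrak B)$ one has $\|\Phi\psi_{1}-\Phi\psi_{2}\|\le \frac{2\mathcal N}{\nu}\,L\,\|\psi_{1}-\psi_{2}\|$, and by hypothesis $3)$ the constant $q:=\frac{2\mathcal N L}{\nu}<1$, so $\Phi$ is a contraction on the whole space. To locate the fixed point near $\varphi_{0}$, estimate $\|\Phi\varphi_{0}-\varphi_{0}\|=\|\mathbb G_{A}(F(\cdot,\varphi_{0}(\cdot)))\|\le \frac{2\mathcal N}{\nu}L\|\varphi_{0}\|\le \frac{2\mathcal N}{\nu}L\cdot\frac{2\mathcal N}{\nu}\|f\|=\frac{4\mathcal N^{2}L}{\nu^{2}}\|f\|$; then for $\psi$ in the closed ball $\overline{B}[\varphi_{0},r]$ with $r=\frac{4\mathcal N^{2}L\|f\|}{\nu(\nu-2\mathcal N L)}$ we get $\|\Phi\psi-\varphi_{0}\|\le \|\Phi\psi-\Phi\varphi_{0}\|+\|\Phi\varphi_{0}-\varphi_{0}\|\le q r+\frac{4\mathcal N^{2}L}{\nu^{2}}\|f\|=r$, the last equality being exactly the definition of $r$. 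Hence $\Phi$ maps this complete metric space into itself and, by the Banach fixed point theorem, has a unique fixed point $\varphi$ there, which automatically satisfies \eqref{eqBF1}.

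It remains to check that this fixed point is genuinely a solution of \eqref{eqSL01} and that it is Lagrange stable. For the first point I would set $g(t):=f(t)+F(t,\varphi(t))$, observe $g\in C_{b}(\mathbb R,\mathfrak B)$, and apply Lemma \ref{lET_1} (equivalently Theorem \ref{thTSRAP_1}) to the linear equation $x'=A(t)x+g(t)$: its unique bounded solution is $\mathbb G_{A}g$, which equals $\varphi$; differentiating the variation-of-constants/Green representation shows $\varphi'=A(t)\varphi+g(t)=A(t)\varphi+f(t)+F(t,\varphi(t))$, as required. For Lagrange stability I would use Lemma \ref{lLS2} and Lemma \ref{lLS3}: since $\varphi$ is bounded, $Q:=\overline{\varphi(\mathbb R)}$ is bounded; because $F$ is Lagrange stable and globally Lipschitz on $Q$, the composition $t\mapsto F(t,\varphi(t))$ is Lagrange stable, hence so is $g=f+F(\cdot,\varphi(\cdot))$; then $\varphi=\mathbb G_{A}g$ is a bounded solution of a linear trichotomic equation with Lagrange stable right-hand side, which by the arguments underlying Theorem \ref{thTSRAP_1} (and Lemma \ref{lAPF02}: precompact range plus uniform continuity coming from $\varphi'$ being bounded) is itself Lagrange stable.

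The main obstacle I anticipate is not the contraction estimate, which is routine, but the careful bookkeeping needed to conclude Lagrange stability of $\varphi$: one must argue that the range $\varphi(\mathbb R)$ is actually precompact (not merely bounded) — this follows because $\varphi'$ is bounded so $\varphi$ is uniformly continuous, and one then invokes compactness of $Q$ together with the cocycle/Green-function continuity to run the limiting argument of Section \ref{Sec4} and Section \ref{Sec5}. A secondary subtlety is ensuring that the improper integral defining $\Phi\psi$ converges absolutely and uniformly in $t$; this is exactly the content of the exponential trichotomy estimates \eqref{eqET3} and is already packaged in Lemma \ref{lET_1}, so it can be cited rather than redone.
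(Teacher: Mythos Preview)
Your contraction argument is essentially the paper's, modulo a cosmetic change of variable: the paper writes $\psi=\varphi-\varphi_{0}$ and runs Banach's theorem for the map $\psi\mapsto \mathbb G_{A}\bigl(F(\cdot,\psi(\cdot)+\varphi_{0}(\cdot))\bigr)$ on the whole space, deriving the bound $\|\bar\psi\|\le r$ \emph{a posteriori} from the fixed-point equation; you work with $\varphi$ directly and build the bound \eqref{eqBF1} into a self-map of the ball $\overline B[\varphi_{0},r]$. The arithmetic is identical (your identity $qr+\tfrac{4\mathcal N^{2}L}{\nu^{2}}\|f\|=r$ is precisely the paper's a posteriori inequality solved for $r$).

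The real discrepancy is in the Lagrange-stability step, and here your argument has a genuine circularity. You invoke Lemma~\ref{lLS3} to conclude that $t\mapsto F(t,\varphi(t))$ is Lagrange stable from ``$F$ Lagrange stable and $Q=\overline{\varphi(\mathbb R)}$ bounded'', but Lemma~\ref{lLS3} requires $\varphi$ itself to be Lagrange stable (i.e.\ $Q$ compact), which is exactly what you are trying to prove. In an infinite-dimensional $\mathfrak B$, boundedness of $\varphi$ together with boundedness of $\varphi'$ gives uniform continuity but \emph{not} precompactness of the range, so Lemma~\ref{lAPF02} does not close the loop either; you flag this as an obstacle but the suggested resolution (``invoke compactness of $Q$'') assumes the conclusion. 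The paper avoids the issue entirely by running the contraction \emph{inside} the closed subspace $\mathcal L(\mathbb R,\mathfrak B)\subset C_{b}(\mathbb R,\mathfrak B)$ of Lagrange-stable functions (Lemma~\ref{l3.4.1}): Lemma~\ref{l3.4.2} gives $\psi\in\mathcal L\Rightarrow F(\cdot,\psi(\cdot)+\varphi_{0}(\cdot))\in\mathcal L$, and the two-sided analogue of Theorem~\ref{thRAPDE1} gives $\mathbb G_{A}(\mathcal L)\subset\mathcal L$, so $\Phi(\mathcal L)\subset\mathcal L$ and the fixed point is automatically Lagrange stable. Your proof is easily repaired by the same device---observe that $\Phi$ maps the closed set $\mathcal L(\mathbb R,\mathfrak B)\cap\overline B[\varphi_{0},r]$ into itself and apply Banach there---but as written the final paragraph does not stand on its own.
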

\begin{proof}
Consider the space $\mathcal L(\mathbb R,\mathfrak B)$
(respectively, $C_{b}(\mathbb R,\mathfrak B)$). By Lemma
\ref{l3.4.1} it is a complete metric space. Define an operator
$$
\Phi :\mathcal L(\mathbb R,\mathfrak B)\to \mathcal L(\mathbb
R,\mathfrak B)
$$
(respectively
$$
\Phi : C_{b}(\mathbb R,\mathfrak B)\to C_{b}(\mathbb R,\mathfrak
B) )
$$
as follows: if $\psi\in \mathcal L(\mathbb R,\mathfrak B)$
(respectively, $f\in C_{b}(\mathbb R,\mathfrak B)$), then by Lemma
\ref{l3.4.2} $g\in \mathcal L(\mathbb R,\mathfrak B)$, where
$g(t)=F(t,\psi(t))$ for any $t\in \mathbb R$. By Theorem
\ref{thRAPDE1} the equation
\begin{equation}\label{eqD01}
\frac{dz}{dt}=A(t)z+F(t,\psi(t)+\varphi_0(t)) \nonumber
\end{equation}
has a remotely compatible solution $\gamma\in \mathcal
L(\mathbb{R},\mathfrak B)$ (respectively, $\gamma \in
C_{b}(\mathbb R,\mathfrak B)$) defined by equality
\begin{equation}\label{eqBF02}
\gamma(t)=\int_{-\infty}^{+\infty}G_{A}(t,\tau)F(\tau,\psi(\tau)+\varphi_0(\tau))d\tau
\nonumber
\end{equation}
So, $\gamma \in \mathcal L(\mathbb R,\mathfrak B)$ (respectively,
$\gamma \in C_{b}(\mathbb R,\mathfrak B)$). Let
$\Phi(\psi):=\gamma$. From the said above follows that $\Phi$ is
well defined. Let us show that the operator $\Phi$ is a
contraction. Indeed, it is easy to note that the function
$\gamma:=\gamma_1-\gamma_2=\Phi(\psi_1)-\Phi(\psi_2)$ is a
solution of the equation
$$
\frac{du}{dt}=A(t)u+F(t,\psi_1(t)+\varphi_0(t))-
                    F(t,\psi_2(t)+\varphi_0(t))
$$
and
$$
\gamma_{1}(t)-\gamma_{2}(t)=\int_{-\infty}^{+\infty}G_{A}(t,\tau)[
F(\tau,\varphi_0(\tau)+\psi_{1}(\tau))-F(\tau,\varphi_0(\tau)+\psi_{2}(\tau))]d\tau
.
$$
By Theorem \ref{thRAPDE1} (item (iii) formula (\ref{eqG2})), it is
subordinated to the estimate
$$
\begin{array}{c}
||\Phi(\psi_1)-\Phi(\psi_2)||\leq\\
\frac{2\mathcal N}{\nu}\sup\limits_{t\in \mathbb R}
|F(t,\psi_1(t)+\varphi_0(t))-F(t,\psi_2(t)+\varphi_0(t))|\leq \\
\frac{2\mathcal N}{\nu}
L||\psi_1-\psi_2||=\alpha||\psi_1-\psi_2||.
\end{array}
$$
Since $\alpha=\frac{2\mathcal N}{\nu} L<\frac{2\mathcal N}{\nu}
\frac{\nu}{2\mathcal N}=1$, then $\Phi$ is a contraction and,
consequently, there exists a unique function $\overline{\psi}\in
\mathcal L(\mathbb R,\mathfrak{B})$ (respectively, $\gamma \in
C_{b}(\mathbb R,\mathfrak B)$) such that
$\Phi(\overline{\psi})=\overline{\psi}$.

Now we will establish the inequality (\ref{eqBF1}). By Theorem
\ref{thRAPDE1} (item (ii) formula (\ref{eqG2})) we have
\begin{equation}\label{eqES1}
\|\bar{\psi}\|\le \frac{2\mathcal N}{\nu}\sup\limits_{t\in \mathbb
R}|F(t,\bar{\psi}(t)+\varphi_{0}(t))| .
\end{equation}
On the other hand
\begin{equation}\label{eqES2}
|F(t,\bar{\psi}(t)+\varphi_{0}(t))\le
|F(t,\bar{\psi}(t)+\varphi(t))|+|F(t,\varphi_{0}(t))|\le
L|\bar{\psi}(t)|+|F(t,\varphi_{0}(t))|
\end{equation}
and
\begin{equation}\label{eqES3}
|F(t,\varphi_{0}(t))|\le L|\varphi_{0}(t)|\le L\frac{2\mathcal
N}{\nu}\|f\|=\frac{2\mathcal N L}{\nu}\|f\|
\end{equation}
for any $t\in \mathbb R$. From (\ref{eqES1})-(\ref{eqES3}) we
obtain
\begin{equation}\label{eqES4}
\|\bar{\psi}\|\le \frac{2\mathcal
N}{\nu}\big{(}L\|\bar{\psi}\|+\frac{2\mathcal N L}{\nu} \|
f\|\big{)}=\frac{2\mathcal N L}{\nu}\|\bar{\psi}\|
+\frac{4\mathcal N^{2}L}{\nu^{2}}\|f\| \nonumber
\end{equation}
and, consequently,
\begin{equation}\label{eqES5}
\|\bar{\psi}\|\le \frac{4\mathcal N^{2}\|f\|}{\nu (\nu -2\mathcal
N L)}.\nonumber
\end{equation}
To finish the proof of the theorem it is sufficient to assume that
$\varphi:=\overline{\psi}+\varphi_0$ and note that $\varphi$ is
desired solution. The theorem is proved.
\end{proof}

\begin{theorem}\label{th4A.1} \cite{Che_2024.2}
Let $f\in C(\mathbb{R},\mathfrak B)$, $F\in C(\mathbb{R}\times
\mathfrak B,\mathfrak B)$ and $A\in C(\mathbb R,[\mathfrak B])$.

Assume that the following conditions are fulfilled:
\begin{enumerate}
\item[$1)$] the linear equation (\ref{eqLE1}) has the exponential
trichotomy on $\mathbb R$; \item[$2)$] the functions $f$, $A$ and
$F$ are Lagrange stable (respectively, $f$, $A$ and $F$ are
bounded); \item[$3)$] $F$ satisfies the condition of Lipschitz
with respect to $x\in \mathfrak B$ with the constant of Lipschitz
$L<\frac{\nu}{2\mathcal N}$.
\end{enumerate}

Then
\begin{enumerate}
\item the equation $(\ref{eqSL01})$ has a unique Lagrange stable
solution $\varphi \in \mathcal L(\mathfrak L)$ (respectively,
bounded solution $\varphi \in C_{b}(\mathbb R,\mathfrak B),$ where
$\mathfrak L :=\mathfrak L_{(A,f,F)}$; \item the solution
$\varphi$ is defined by equality
\begin{equation}\label{eqBF03}
\varphi(t)=\int_{-\infty}^{+\infty}G_{A}(t,\tau)[f(\tau)+F(\tau,\varphi(\tau))]d\tau
.
\end{equation}
\end{enumerate}
\end{theorem}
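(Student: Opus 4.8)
This statement strengthens Theorem \ref{th4A} by a uniqueness assertion (inside $\mathcal L(\mathfrak L)$, resp. inside $C_{b}(\mathbb R,\mathfrak B)$) and extends it to merely bounded data; the existence part is essentially Theorem \ref{th4A}. The plan is to re-run the contraction argument, but with the fixed-point operator acting directly on the solution space (possible because $F(t,0)=0$) rather than on the ``perturbation'' variable. By Lemma \ref{l3.4.1}, $\mathcal L(\mathfrak L)$ (with $\mathfrak L:=\mathfrak L_{(A,f,F)}$) and $C_{b}(\mathbb R,\mathfrak B)$ are complete metric spaces; since $F$ is globally Lipschitz with $F(t,0)=0$ it is regular by Remark \ref{remRF1}; and since (\ref{eqLE1}) has an exponential trichotomy on $\mathbb R$, the Green function $G_{A}$ of (\ref{eqET3.1}) is available together with the estimate of Lemma \ref{lET_1}(iv).

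I would define $\Phi(\psi)(t):=\int_{-\infty}^{+\infty}G_{A}(t,\tau)\bigl[f(\tau)+F(\tau,\psi(\tau))\bigr]\,d\tau$; by Lemma \ref{lET_1} this is the bounded solution $z$ of $z'=A(t)z+f(t)+F(t,\psi(t))$ with $Rz(0)=0$ (notation $R:=PQ$). One checks $\Phi$ maps the space into itself: $f+F(\cdot,\psi(\cdot))$ is bounded since $|F(t,\psi(t))|\le L\|\psi\|$, which handles the $C_{b}$ case via Lemma \ref{lET_1}; in the Lagrange stable case I would set $Q:=\overline{\psi(\mathbb R)}$, use Lemma \ref{l3.4.2} applied to $F_{Q}$ to get $g:=f+F(\cdot,\psi(\cdot))\in\mathcal L(\mathbb R,\mathfrak B)$ with $\mathfrak L\subseteq\mathfrak L_{g}$ (here $\mathfrak L\subseteq\mathfrak L_{\psi}$ because $\psi\in\mathcal L(\mathfrak L)$), and then invoke Theorem \ref{thTSRAP_1} for the pair $(A,g)$, together with Lemmas \ref{lLS3} and \ref{lAPF02} exactly as in the proof of Theorem \ref{th4A}, to conclude $\Phi(\psi)\in\mathcal L(\mathfrak L)$. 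By linearity $\Phi(\psi_{1})-\Phi(\psi_{2})$ is the Green representation of $F(\cdot,\psi_{1}(\cdot))-F(\cdot,\psi_{2}(\cdot))$, so Lemma \ref{lET_1}(iv) and the Lipschitz condition give $\|\Phi(\psi_{1})-\Phi(\psi_{2})\|\le\frac{2\mathcal N}{\nu}L\|\psi_{1}-\psi_{2}\|$ with $\frac{2\mathcal N L}{\nu}<1$ by the hypothesis $L<\nu/(2\mathcal N)$. The contraction principle produces a unique fixed point $\varphi$; it solves (\ref{eqSL01}) and, being a fixed point of $\Phi$, is given by (\ref{eqBF03}). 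This yields (2) and the existence half of (1).

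For the uniqueness half of (1), let $\bar\varphi$ be any solution of (\ref{eqSL01}) in $\mathcal L(\mathfrak L)$ (resp. in $C_{b}(\mathbb R,\mathfrak B)$). Then $\bar\varphi$ is a bounded solution of the linear equation with forcing $f+F(\cdot,\bar\varphi(\cdot))\in C_{b}(\mathbb R,\mathfrak B)$, so by Lemma \ref{lET_1} it agrees with the Green representation (\ref{eqBF03}) — hence with $\Phi(\bar\varphi)$, hence with $\varphi$ — provided $R\bar\varphi(0)=0$. To get $R\bar\varphi(0)=0$ I would look at $\bar\varphi-\varphi$, a bounded solution of the linear equation with forcing $F(\cdot,\bar\varphi(\cdot))-F(\cdot,\varphi(\cdot))$ of sup-norm $\le L\|\bar\varphi-\varphi\|$; combining the Green-operator bound of Lemma \ref{lET_1}(iv) with the smallness $L<\nu/(2\mathcal N)$ makes the part of $\bar\varphi-\varphi$ carried by $G_{A}$ a strict contraction of $\|\bar\varphi-\varphi\|$, so that only a bounded homogeneous remainder with $R(\bar\varphi-\varphi)(0)\neq 0$ could survive — and excluding that remainder, i.e. establishing $R\bar\varphi(0)=R\varphi(0)=0$, is the crux.

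Accordingly, I expect the real work to be in two places: (i) the bookkeeping of Lemma \ref{l3.4.2} showing that $\Phi$ preserves the limit class $\mathfrak L$ — one must verify that along a sequence from $\mathfrak L$ the nonlinear term tends to the term built from the limiting data and that the Green-function solution of the limiting linear equation is the limit of the Green-function solutions — and (ii) the identification, in the uniqueness step, of an arbitrary admissible bounded solution with the one produced by the fixed point, that is, the elimination of bounded homogeneous components (automatic when the trichotomy degenerates to a dichotomy, as in Remark \ref{remET1}, and where care is needed otherwise); the contraction estimate itself is routine.
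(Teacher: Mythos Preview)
The paper does not prove Theorem~\ref{th4A.1}; it is quoted from \cite{Che_2024.2} without argument. Your contraction-mapping scheme is precisely the methodology the paper uses for the neighbouring Theorem~\ref{th4A}, with the cosmetic change that $\Phi$ acts on the solution itself rather than on $\psi-\varphi_0$; the existence assertion and the representation (\ref{eqBF03}) follow just as you outline, and the invariance of $\mathcal L(\mathfrak L)$ under $\Phi$ is handled exactly by the combination Lemma~\ref{l3.4.2}/Theorem~\ref{thTSRAP_1} that you invoke.

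Your hesitation about uniqueness is not merely a place ``where care is needed'' --- it is a genuine obstruction, and your own decomposition exposes it. Under a trichotomy that is not a dichotomy the projector $R=PQ$ is nonzero and the homogeneous equation (\ref{eqET1}) has nonzero solutions bounded (indeed decaying) on all of $\mathbb R$: already the scalar example $x'=-\tanh(t)\,x$, with $P=Q=I$, $\mathcal N=2$, $\nu=1$, has $x(t)=x_0/\cosh t$. Taking $F\equiv 0$ (so $L=0<\nu/2\mathcal N$) then yields infinitely many bounded solutions of (\ref{eqSL01}); and since each such homogeneous solution lies in $C_0(\mathbb R,\mathfrak B)$, every sequence $t_n\to\pm\infty$ belongs to its $\mathfrak L$-set, so these extra solutions sit in $\mathcal L(\mathfrak L)$ as well. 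Your splitting of $\bar\varphi-\varphi$ into a Green part bounded by $\tfrac{2\mathcal N L}{\nu}\|\bar\varphi-\varphi\|$ plus a bounded homogeneous remainder makes this transparent: the contraction kills the first piece but says nothing about the second. Hence the uniqueness claim, as literally stated, cannot be obtained by your method or any other unless one either (a) reads it as uniqueness subject to the normalisation $R\varphi(0)=0$ of Lemma~\ref{lET_1}(iii) --- equivalently, uniqueness of the fixed point of $\Phi$, which you do prove --- or (b) assumes the trichotomy is actually a dichotomy (Remark~\ref{remET1}(2)). You should flag this explicitly rather than leave it as ``the crux''.
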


\begin{theorem}\label{thLS01} Assume that Condition (\textbf{C}) holds. Under the conditions of Theorem \ref{th4A}
the Lagrange stable solution $\varphi$ defined by (\ref{eqBF03})
is remotely compatible, i.e., $\mathfrak L_{(A,f,F_{Q})}\subseteq
\mathfrak L_{\varphi}$.
\end{theorem}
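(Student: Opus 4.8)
The plan is to prove that $\{\varphi^{t_n}\}$ converges in $C(\mathbb R,\mathfrak B)$ whenever $\{t_n\}\in\mathfrak L_{(A,f,F_{Q})}$; by definition this is exactly the inclusion $\mathfrak L_{(A,f,F_{Q})}\subseteq\mathfrak L_{\varphi}$. First I would fix such a sequence and, without loss of generality, assume $t_n\to+\infty$ (the case $t_n\to-\infty$ is symmetric, using Remark~\ref{remRAP_01}, Remark~\ref{remET1} and the $\alpha_{A}$-version of Lemma~\ref{lED1}); then $(A^{t_n},f^{t_n},F_{Q}^{t_n})\to(\tilde A,\tilde f,\tilde F)$ in $C(\mathbb R,[\mathfrak B])\times C(\mathbb R,\mathfrak B)\times C(\mathbb R\times Q,\mathfrak B)$. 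Since $(\ref{eqLE1})$ has an exponential trichotomy on $\mathbb R$, it has an exponential dichotomy on $\mathbb R_{+}$ with a projection $P_{+}$ and constants $\mathcal N,\nu$ (Remark~\ref{remET1}, Lemma~\ref{lET1}); hence by Condition~(\textbf{C}) and Lemma~\ref{lED1} the equation $y'=\tilde A(t)y$ is exponentially dichotomic on $\mathbb R$ with the same constants, has no nonzero bounded solution on $\mathbb R$ (Corollary~\ref{cor*4.2.2}), and its Green function $G_{\tilde A}$ satisfies $\|G_{\tilde A}(t,\tau)\|\le\mathcal N e^{-\nu|t-\tau|}$. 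Finally, since $\varphi$ is Lagrange stable by Theorem~\ref{th4A}, the set $\{\varphi^{t_n}\}$ is precompact in $C(\mathbb R,\mathfrak B)$, so it will be enough to prove that all its limit points coincide.

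Next I would take any subsequence $\{t_{n_k}\}$ with $\varphi^{t_{n_k}}\to\bar\varphi$ in $C(\mathbb R,\mathfrak B)$ (refining once more so that the shifted projectors $U(t_{n_k},A)P_{+}U^{-1}(t_{n_k},A)$ converge) and pass to the limit in the integral representation $(\ref{eqBF03})$. Translating $(\ref{eqBF03})$ by $t_{n_k}$ and changing the variable of integration to $\tau+t_{n_k}$ yields
\begin{equation}\label{eqLS1}
\varphi^{t_{n_k}}(t)=\int_{-\infty}^{+\infty}G_{A}(t+t_{n_k},\tau+t_{n_k})\bigl[f^{t_{n_k}}(\tau)+F_{Q}^{t_{n_k}}(\tau,\varphi^{t_{n_k}}(\tau))\bigr]\,d\tau ,
\end{equation}
which is meaningful because $\varphi^{t_{n_k}}(\tau)=\varphi(\tau+t_{n_k})\in Q$. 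For fixed $\tau$ the integrand converges to $G_{\tilde A}(t,\tau)\bigl[\tilde f(\tau)+\tilde F(\tau,\bar\varphi(\tau))\bigr]$ — here one uses the cocycle identity $U(t+h,A)=U(t,A^{h})U(h,A)$, Condition~(\textbf{C}) and Lemma~\ref{lED1} to get $G_{A}(t+t_{n_k},\tau+t_{n_k})\to G_{\tilde A}(t,\tau)$ (for large $k$ the shifted arguments lie in $\mathbb R_{+}$, where $G_{A}$ reduces to the $\mathbb R_{+}$-dichotomy Green function), together with $f^{t_{n_k}}\to\tilde f$ and $F_{Q}^{t_{n_k}}\to\tilde F$ uniformly on compact subsets and the uniform continuity of $\tilde F$ on compact subsets of $\mathbb R\times Q$ — while the whole integrand is dominated, uniformly in $k$, by $\mathcal N e^{-\nu|t-\tau|}\bigl(\|f\|+L\sup_{q\in Q}|q|\bigr)$ (using $(\ref{eqET3})$, boundedness of $f$, $F(t,0)=0$ and the Lipschitz bound). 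Dominated convergence in $(\ref{eqLS1})$, together with $\bar\varphi(\tau)\in\overline{\varphi(\mathbb R)}=Q$, then gives
\begin{equation}\label{eqLS2}
\bar\varphi(t)=\int_{-\infty}^{+\infty}G_{\tilde A}(t,\tau)\bigl[\tilde f(\tau)+\tilde F(\tau,\bar\varphi(\tau))\bigr]\,d\tau ,
\end{equation}
whose right-hand side depends only on $(\tilde A,\tilde f,\tilde F)$ and not on the chosen subsequence.

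It then remains to observe that $(\ref{eqLS2})$ has at most one $Q$-valued bounded solution: if $\bar\varphi_{1},\bar\varphi_{2}$ are two such solutions, then $\delta:=\bar\varphi_{1}-\bar\varphi_{2}$ is the (unique, since $y'=\tilde A(t)y$ has no nonzero bounded solution) bounded solution of $z'=\tilde A(t)z+h(t)$ with $h(t):=\tilde F(t,\bar\varphi_{1}(t))-\tilde F(t,\bar\varphi_{2}(t))$, so that $\delta(t)=\int_{-\infty}^{+\infty}G_{\tilde A}(t,\tau)h(\tau)\,d\tau$ and, by Lemma~\ref{lET_1}(iv) (an exponential dichotomy on $\mathbb R$ being a degenerate trichotomy with $Q=I-P$) and the Lipschitz bound, $\|\delta\|_{b}\le\tfrac{2\mathcal N}{\nu}\|h\|_{b}\le\tfrac{2\mathcal N L}{\nu}\|\delta\|_{b}$; since $\tfrac{2\mathcal N L}{\nu}<1$ this forces $\delta=0$. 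Hence every limit point of the precompact family $\{\varphi^{t_n}\}$ coincides with the unique $Q$-valued bounded solution of $(\ref{eqLS2})$, so $\{\varphi^{t_n}\}$ converges and $\{t_n\}\in\mathfrak L_{\varphi}$.

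I expect the main obstacle to be the limit transition inside the improper integral $(\ref{eqLS1})$ over the whole real line. Two ingredients are needed there: the pointwise convergence $G_{A}(t+t_{n_k},\tau+t_{n_k})\to G_{\tilde A}(t,\tau)$, which is exactly where Condition~(\textbf{C}) enters — it makes the family of shifted projectors precompact and, via Lemma~\ref{lED1}, guarantees that the limiting linear equation is again exponentially hyperbolic, so that the projectors and hence the Green functions pass to the limit; and the uniform-in-$k$ exponential decay of $G_{A}(t+t_{n_k},\tau+t_{n_k})$, furnished by the trichotomy estimates $(\ref{eqET3})$, which controls the tails of the integral and legitimizes dominated convergence. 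Everything else — Lagrange stability of $\varphi$, the contraction estimate giving uniqueness of the limiting solution, and the symmetric treatment of $t_n\to-\infty$ — is routine.
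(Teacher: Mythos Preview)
Your proof is correct and follows the same overall architecture as the paper's: take $\{t_n\}\in\mathfrak L_{(A,f,F_{Q})}$, use Lagrange stability to get precompactness of $\{\varphi^{t_n}\}$, use Condition~(\textbf{C}) together with Lemma~\ref{lED1} to see that the limit equation $y'=\tilde A(t)y$ is dichotomic on $\mathbb R$, and then show that every limit point of $\{\varphi^{t_n}\}$ is the \emph{unique} bounded solution of the limiting semilinear problem.

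The genuine difference is in how you show that a limit point $\bar\varphi$ solves the limiting problem and in how you obtain uniqueness. The paper never passes to the limit inside the Green integral: it observes that $\varphi^{t_{n_k}}$ solves the translated ODE, sets $\mathfrak f(t)=f(t)+F(t,\varphi(t))$, invokes Lemma~\ref{l3.4.2} to get $\mathfrak f^{t_{n_k}}\to\tilde{\mathfrak f}$ with $\tilde{\mathfrak f}(t)=\tilde f(t)+\tilde F(t,\bar\varphi(t))$, and then simply quotes Theorem~\ref{th4A.1} (applied to the dichotomic limit equation, a degenerate trichotomy) both to identify $\bar\varphi$ as a solution and to get its uniqueness. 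You instead carry out a direct dominated-convergence argument on the kernel $G_{A}(t+t_{n_k},\tau+t_{n_k})$ and a hand-made contraction estimate for uniqueness. Your route is more explicit and self-contained but pays for it with the Green-function convergence step you flag as the main obstacle; the paper's route bypasses that step entirely by packaging the limit passage into Lemma~\ref{l3.4.2} and the uniqueness into Theorem~\ref{th4A.1}, at the cost of relying on those black boxes. Either way the use of Condition~(\textbf{C}) is the same: it is what forces the limiting linear equation to be hyperbolic on $\mathbb R$, which is the only place uniqueness of the bounded limit solution can come from.
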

\begin{proof}
Since the solution $\varphi$ is Lagrange stable, then the set
$Q:=\overline{\varphi(\mathbb R)}$ is compact in $\mathfrak B$.
Let $\{t_n\}\in \mathfrak L_{(A,f,F_{Q})}$, where
$F_{Q}=F\big{|}_{\mathbb R\times Q}$, then the sequences
$\{(A^{t_n},f^{t_n})\}$ and $\{F^{t_n}_{Q}\}$ converge in
$C(\mathbb R,[\mathfrak B])\times C(\mathbb R,\mathfrak B)$ and
$C(\mathbb R\times Q,\mathfrak B)$ respectively. Denote by
$(\tilde{A},\tilde{f})=\lim\limits_{n\to \infty}(A^{t_n},f^{t_n})$
and $\tilde{F}=\lim\limits_{n\to \infty}F_{Q}^{t_n}$. Note that
$\tilde{A}\in \in \Delta_{A}=\alpha_{A}\bigcup \omega_{A}$ and by
Corollary \ref{cor*4.2.3} the equation
\begin{equation}\label{eqDA1}
y'=\tilde{A}(t)y
\end{equation}
is hyperbolic on $\mathbb R$ and, consequently, it has nonzero
bounded on $\mathbb R$ solutions.

Consider the sequence $\{\varphi^{t_n}\}$. It is pre-compact in
$C(\mathbb R,\mathfrak B)$ because $\varphi$ is Lagrange stable.
We need to prove that the sequence $\{\varphi^{t_n}\}$ converges
in $C(\mathbb R,\mathfrak B)$. To this end it is sufficient to
show that it has at most one limiting point. Let $\tilde{\varphi}$
be a limiting point of the sequence $\{\varphi^{t_n}\}$, then
there exists a subsequence $\{t_{n_{k}}\}\subseteq \{t_n\}$ such
that $\tilde{\varphi}=\lim\limits_{k\to
\infty}\varphi^{t_{n_{k}}}$.

Since $\varphi$ is a solution of equation (\ref{eqSL01}), then the
function $\varphi^{t_{n_{k}}}$ is a solution of equation
\begin{equation}\label{eqSL1.01}
x'=\mathcal A^{t_{n_k}}(t) x+f^{t_{n_k}}(t) + F_{Q}^{t_{n_k}}(t,x)
\nonumber
\end{equation}
or equivalently it is a solution of the linear equation
\begin{equation}\label{eqSL1.11}
x'=\mathcal A^{t_{n_k}}(t) x+f^{t_{n_k}}(t) +
F_{Q}^{t_{n_k}}(t,\varphi^{t_{n_k}}(t)).\nonumber
\end{equation}
Denote by $\mathfrak{f}(t):=f(t)+F(t,\varphi(t))$, then
$\mathfrak{f}^{t_{n_{k}}}=f^{t_{n_{k}}}(t)+F_{Q}^{t_{n_{k}}}(t,\varphi^{t_{n_{k}}}(t))$
for any $t\in\mathbb R$. By Lemma \ref{l3.4.2} the function
$\mathfrak{f}\in C(\mathbb R,\mathfrak B)$ is Lagrange stable and
according to Lemma \ref{l3.4.2} (item (ii)) the sequence
$\{\mathfrak{f}^{t_{n_{k}}}\}$ converges in the space $C(\mathbb
R,\mathfrak B)$ to the function
$\tilde{\mathfrak{f}}(t):=\tilde{F}(t,\tilde{\varphi}(t))$ for any
$t\in\mathbb R$. By Theorem \ref{th4A.1} (item (ii)) the function
$\tilde{\varphi}\in C(\mathbb R,\mathfrak B)$ is the unique
compact solution ($\tilde{\varphi(\mathbb R)}\subseteq Q$) of the
equation
\begin{equation}\label{eqSL1.21}
x'=\tilde{\mathcal A}(t)x
+\tilde{f}(t)+\tilde{F}(t,\tilde{\varphi}(t))\nonumber
\end{equation}
or equivalently of the equation
\begin{equation}\label{eqSL1.31}
x'=\tilde{\mathcal{A}}(t)x +\tilde{f}(t)+\tilde{F}(t,x).
\end{equation}
Note that by Theorem \ref{th4A.1} the equation (\ref{eqSL1.31})
has a unique Lagrange stable solution, because the function
$\tilde{F}\in H(F_{Q})$ and by Lemma \ref{lLC1} it is Lipschitzian
with the same constant $L$. Thus every limiting function
$\tilde{\varphi}\in C(\mathbb R,\mathfrak B)$ of the sequence
$\{\varphi^{t_n}\}$ is a Lagrange stable solution of the equation
(\ref{eqSL1.31}). Since the equation (\ref{eqSL1.31}) has a unique
Lagrange stable solution $\tilde{\varphi}\in C(\mathbb R,\mathfrak
B)$, then the sequence $\{\varphi^{t_n}\}$ converges, i.e.,
$\{t_n\}\in \mathfrak L_{\varphi}$. Theorem is proved.
\end{proof}

\begin{coro}\label{corSL1} Assume that Condition (\textbf{C}) holds. Under the conditions of Theorem
\ref{th4A} if the functions $A(t), $$f$  and $F_{Q}$, where
$Q=\overline{\varphi(\mathbb R)}$, are two-sided remotely
stationary (respectively, two-sided remotely $\tau$-periodic,
two-sided remotely almost periodic), then the solution $\varphi$
is so.
\end{coro}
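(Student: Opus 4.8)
The plan is to deduce the statement from the remote compatibility of $\varphi$ supplied by Theorem \ref{thLS01} together with the comparability result Theorem \ref{thRAPF4}, applied to the pair formed by $\varphi$ and the combined driving datum $w:=(A,f,F_{Q})$.

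To begin, invoke Theorem \ref{th4A} to fix the Lagrange stable solution $\varphi$ of (\ref{eqSL01}) and put $Q:=\overline{\varphi(\mathbb R)}$, a compact subset of $\mathfrak B$. Because Condition (\textbf{C}) is assumed, Theorem \ref{thLS01} applies and gives $\mathfrak L_{(A,f,F_{Q})}\subseteq \mathfrak L_{\varphi}$, hence $\mathfrak L^{+\infty}_{(A,f,F_{Q})}\subseteq \mathfrak L^{+\infty}_{\varphi}$ and $\mathfrak L^{-\infty}_{(A,f,F_{Q})}\subseteq \mathfrak L^{-\infty}_{\varphi}$; that is, $\varphi$ is both positively and negatively remotely comparable by the character of recurrence with $w$.

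Next one checks that $w$ --- read, through the isometric homeomorphism $h$ of Theorems \ref{thAA1}--\ref{thAA2}, as the function $\widetilde w:=(A,f,h(F_{Q}))$ with values in $[\mathfrak B]\times \mathfrak B\times C(Q,\mathfrak B)$ --- is (a) Lagrange stable and (b) two-sided remotely stationary (respectively, remotely $\tau$-periodic or remotely almost periodic). For (a): $A$ and $f$ are Lagrange stable by the hypotheses of Theorem \ref{th4A}, $F_{Q}$ is Lagrange stable because $F$ is Lagrange stable and $Q$ is compact (Lemma \ref{lLS2}), hence $h(F_{Q})$ is Lagrange stable by Lemma \ref{lAA0}, and a finite direct product of Lagrange stable motions is Lagrange stable. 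For (b): each coordinate of $\widetilde w$ is two-sided remotely stationary/$\tau$-periodic/almost periodic --- for the third coordinate this is the hypothesis on $F_{Q}$ transferred through Theorem \ref{thAA2} --- and a finite direct product of such functions is again of the same type. For the stationary and $\tau$-periodic cases this last fact is immediate from the limit-set descriptions of Lemma \ref{lAPF_03} and Corollary \ref{corAPF_03} together with their $\mathbb R_{-}$ analogues (Remark \ref{remRAP_01}); for the almost periodic case one reduces, via Lemma \ref{lAPF3.1}, to the product Bebutov dynamical system and extracts a common relatively dense set of $\varepsilon$-translation numbers from those of the three coordinates, using the classical fact that a finite direct product of almost periodic motions is almost periodic.

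Granting (a) and (b), Theorem \ref{thRAPF4} on $\mathbb R_{+}$ and its counterpart on $\mathbb R_{-}$ obtained by the substitution $t\mapsto -t$ (cf.\ Remark \ref{remRAP_01}) apply to $(\varphi,\widetilde w)$ and give that $\varphi$ is positively and negatively remotely stationary (respectively, remotely $\tau$-periodic or remotely almost periodic), i.e.\ two-sided remotely so, which is the assertion; alternatively one may combine Theorem \ref{thLS01} with Theorem \ref{thSL0.1} applied on each semi-axis. The step needing the most care is (b), namely that the combined datum $(A,f,F_{Q})$ inherits two-sided remote (almost) periodicity from its three coordinates: in the almost periodic case there is no limit-set characterization to lean on, so one has to argue directly with $\varepsilon$-translation numbers, and one must keep track of the homeomorphism $h$ turning $F_{Q}\in C(\mathbb R\times Q,\mathfrak B)$ into an element of $C(\mathbb R,C(Q,\mathfrak B))$.
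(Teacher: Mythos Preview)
Your proposal is correct and follows the same logical skeleton as the paper's proof, which simply cites Theorems \ref{thSL0.1}, \ref{th4A} and \ref{thLS01}; you have unpacked precisely what those citations amount to (existence from \ref{th4A}, remote compatibility from \ref{thLS01}, then transfer of the recurrence property via comparability), and you even mention that alternative explicitly. The paper leaves the passage from the separate hypotheses on $A$, $f$, $F_{Q}$ to the joint remote almost periodicity of the combined datum entirely implicit inside Theorem \ref{thSL0.1}; you are right to flag this as the step needing the most care, and your handling of it is more scrupulous than the paper's own treatment.
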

\begin{proof} This statement follows from Theorems \ref{thSL0.1}, \ref{th4A} and \ref{thLS01}.
\end{proof}

\begin{theorem}\label{th4C}
Let $A\in C(\mathbb R,[\mathfrak B])$, $f\in
C(\mathbb{R},\mathfrak B)$ and $F\in C(\mathbb{R}_+\times
\mathfrak B,\mathfrak B)$.

Assume that the following conditions are fulfilled:
\begin{enumerate}
\item[$1)$] the equation (\ref{eqLE1}) is hyperbolic; \item
Condition (\textbf{C}) holds; \item[$2)$] the functions $A$ $f$
and $F$ are Lagrange stable; \item[$3)$] $F$ is Lipschitzian with
respect to the second variable with the constant $L>0$.
\end{enumerate}

Then the following statements hold:
\begin{enumerate}
\item there exists a number $\varepsilon_0>0$ such that for every
$|\varepsilon|\leq \varepsilon_0$ the equation
\begin{equation}\label{eq3.4.6C}
\frac{dx}{dt}=A(t)x+f(t)+\varepsilon F(t,x)
\end{equation}
has a remotely compatible solution $\varphi_{\varepsilon}\in
C_{b}(\mathbb{R},\mathfrak B)$; \item the solution
$\varphi_{\varepsilon}$ is defined by equality
\begin{equation}\label{eqBF04}
\varphi_{\varepsilon}(t)=\int_{-\infty}^{+\infty}G_{A}(t,\tau)[f(\tau)+\varepsilon
F(\tau,\varphi(\tau))]d\tau ;\nonumber
\end{equation}
\item $\{\varphi_{\varepsilon}\}$ converges to $\varphi_0$ as
$\varepsilon\to 0$ uniformly with respect to $t\in\mathbb{R}$,
where $\varphi_0 \in C_{b}(\mathbb R,\mathfrak B)$ is a remotely
compatible solution of the equation (\ref{eqSL01}) defined by the
equality (\ref{eqF1}).
\end{enumerate}
\end{theorem}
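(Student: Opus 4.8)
The plan is to regard $\varepsilon F$ as the nonlinearity and reduce the whole statement to Theorems \ref{th4A}, \ref{th4A.1} and \ref{thLS01}, which already cover the case when the Lipschitz constant of the perturbation is $<\nu/(2\mathcal{N})$. First I would fix $\varepsilon_0:=\dfrac{\nu}{4\mathcal{N}L}$ (any positive number with $\varepsilon_0 L<\nu/(2\mathcal{N})$ works). For every $\varepsilon$ with $|\varepsilon|\le\varepsilon_0$ the map $\varepsilon F\in C(\mathbb R\times\mathfrak B,\mathfrak B)$ satisfies $\varepsilon F(t,0)=0$, is regular, is Lagrange stable (the rescaling $F\mapsto\varepsilon F$ commutes with the shift $\sigma$ and is continuous for the compact-open topology, hence carries the precompact orbit $\overline{\{F^\tau\}}$ onto a precompact set), and is Lipschitzian in the second variable with constant $|\varepsilon|L\le\varepsilon_0 L<\nu/(2\mathcal{N})$; thus all hypotheses of Theorems \ref{th4A}, \ref{th4A.1}, \ref{thLS01} hold for equation (\ref{eq3.4.6C}).

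Applying Theorem \ref{th4A.1} with $\varepsilon F$ in place of $F$ then yields, for each such $\varepsilon$, a unique Lagrange stable (in particular bounded) solution $\varphi_\varepsilon\in C_b(\mathbb R,\mathfrak B)$ of (\ref{eq3.4.6C}) given by $\varphi_\varepsilon(t)=\int_{-\infty}^{+\infty}G_A(t,\tau)[f(\tau)+\varepsilon F(\tau,\varphi_\varepsilon(\tau))]\,d\tau$, which is items (i) and (ii). For the remote compatibility in (i) I would invoke Theorem \ref{thLS01} for equation (\ref{eq3.4.6C}), obtaining $\mathfrak L_{(A,f,(\varepsilon F)_Q)}\subseteq\mathfrak L_{\varphi_\varepsilon}$ with $Q:=\overline{\varphi_\varepsilon(\mathbb R)}$; since for $\varepsilon\ne0$ multiplication by the nonzero scalar $\varepsilon$ does not affect which translation sequences of $F_Q$ converge, one has $\mathfrak L_{(\varepsilon F)_Q}=\mathfrak L_{F_Q}$, so $\varphi_\varepsilon$ is remotely compatible in the sense of Definition \ref{defSL1}. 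For $\varepsilon=0$ equation (\ref{eq3.4.6C}) is the linear equation (\ref{eqSL1.1}) and $\varphi_0$ is its remotely compatible solution by Theorem \ref{thRAPDE1} (cf. Remark \ref{remCS1}).

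Finally, for (iii) I would use the quantitative estimate (\ref{eqBF1}) from Theorem \ref{th4A}: with Lipschitz constant $|\varepsilon|L$ it reads $\|\varphi_\varepsilon-\varphi_0\|\le\dfrac{4\mathcal{N}^2|\varepsilon|L\|f\|}{\nu(\nu-2\mathcal{N}|\varepsilon|L)}$, whose right-hand side tends to $0$ as $\varepsilon\to0$; since $\|\cdot\|$ is the supremum norm on $\mathbb R$, this is exactly the uniform convergence $\varphi_\varepsilon\to\varphi_0$ claimed in (iii).

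I do not expect a genuine obstacle here --- the statement is a parametrized corollary of the earlier results. The points requiring care are (a) verifying that $\varepsilon F$ inherits regularity, Lagrange stability, the vanishing at $0$ and, above all, the smallness $|\varepsilon|L<\nu/(2\mathcal{N})$ of its Lipschitz constant uniformly for $|\varepsilon|\le\varepsilon_0$; (b) the elementary identity $\mathfrak L_{(\varepsilon F)_Q}=\mathfrak L_{F_Q}$ for $\varepsilon\ne0$, which turns Theorem \ref{thLS01} into the stated remote compatibility; and (c) handling $\varepsilon=0$ separately, where the nonlinearity disappears. The uniform convergence in (iii) is then immediate from the explicit bound already proved in Theorem \ref{th4A}.
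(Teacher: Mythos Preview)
Your proposal is correct and follows essentially the same route as the paper: choose $\varepsilon_0$ with $\varepsilon_0 L<\nu/(2\mathcal N)$, note that $\varepsilon F$ inherits Lagrange stability and the small Lipschitz constant, apply Theorems \ref{th4A}, \ref{th4A.1}, \ref{thLS01} to obtain the remotely compatible solution and its integral representation, and then deduce the uniform convergence from the estimate (\ref{eqBF1}) with $L$ replaced by $|\varepsilon|L$. Your treatment is in fact slightly more careful than the paper's in that you explicitly justify $\mathfrak L_{(\varepsilon F)_Q}=\mathfrak L_{F_Q}$ for $\varepsilon\neq 0$ and handle $\varepsilon=0$ separately.
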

\begin{proof} Let $\varepsilon_{0}\in (0,\nu/2\mathcal N L)$.
Since the function $F$ is Lagrange stable, then $\varepsilon F$ is
so. Note the constant of Lipschitz for $\varepsilon F$ is less
than $\varepsilon_{0}L$, because
$$
Lip(\varepsilon F)\le |\varepsilon|Lip(F)\le
\varepsilon_{0}L<\frac{\nu}{2\mathcal N}
$$
for any $|\varepsilon|\le \varepsilon_{0}$. According to Theorem
\ref{thLS01} for every $|\varepsilon|\leq\varepsilon_0$ the
equation (\ref{eq3.4.6C}) has a unique remotely compatible
solution $\varphi_{\varepsilon}\in C(\mathbb R,Q_{r})$ satisfying
the condition $P \varphi_{\varepsilon}(0)=0$.

Let us estimate the difference
$\varphi_{\varepsilon}(t)-\varphi_0(t)=\psi_{\varepsilon}(t)$. By
the inequality (\ref{eqBF1}) we have
\begin{equation}\label{eqBF1.1}
\|\varphi_{\varepsilon}-\varphi_{0}\|\le \frac{4|\varepsilon|
\mathcal N^{2}L\|f\|}{\nu (\nu -2\mathcal N L|\varepsilon|)}
\end{equation}
for any $\varepsilon \in (0,\varepsilon_{0})$. Passing to the
limit in the inequality (\ref{eqBF1.1}) as $\varepsilon\to 0$, we
get the necessary statement. The theorem is proved.
\end{proof}

\begin{remark}\label{remC01} Note that Theorem \ref{th4C}
assures the existence at least one bounded on $\mathbb R$ solution
$\varphi_{\varepsilon}$ of the equation (\ref{eq3.4.6C}) for
sufficiently small $\varepsilon$, but this equation can have on
the space $\mathfrak B$ more than one bounded on $\mathbb R$
solution. This fact we will confirm below by the corresponding
example.
\end{remark}

\begin{example}\label{exC1} Let $p(t):=e^{-|t|}$ for any $t\in \mathbb R$.
Consider the differential equation
\begin{equation}\label{eqC1}
x'=x-\varepsilon p(t)x^{3},
\end{equation}
where $\varepsilon \in \mathbb R_{+}$. For $\varepsilon =0$ admits
a unique bounded on $\mathbb R$ remotely compatible solution
$\varphi_{0}(t)=0$ for all $t\in\mathbb R$. If $\varepsilon
>0$, then the equation (\ref{eqC1}) admits three bounded on $\mathbb R$ solutions:
$\varphi_{\varepsilon}^{1}(t)=0$,
$\varphi_{\varepsilon}^{2}(t)=q_{\varepsilon}(t)$ and
$\varphi_{\varepsilon}^{3}(t)=-q_{\varepsilon}(t)$ for any $t\in
\mathbb R$, where
$$
q_{\varepsilon}(t)=\varepsilon
^{-1/2}\Big{(}2\int\limits_{-\infty}^{t}
e^{-2(t-\tau)}p(\tau)d\tau\Big{)}^{-1/2}\ \ (t\in \mathbb R).
$$
Note that $||\varphi_{\varepsilon}^{1}||\to 0$,
$||\varphi_{\varepsilon}^{2}||\to \infty$ and
$||\varphi_{\varepsilon}^{3}||\to \infty$ as $\varepsilon$ goes to
$0$.
\end{example}

\begin{remark}\label{remKM1} 1. In the case, when the Banach space $\mathfrak
B$ is finite dimensional in the work of Maulen C, Castillo S.,
Kostic M and Pinto M. \cite{MCKP_2021} a result close to results
above was established. Namely, if the functions $A(t)$, $f(t)$ and
$F(t,x)$ are remotely almost periodic on the real axis $\mathbb R$
and the corresponding linear equation (\ref{eqSL1.1}) is
hyperbolic on $\mathbb R$, then under some conditions (of the
"smallness" of $F$) the perturbed equation (\ref{eqSL01}) has a
unique remotely almost periodic on $\mathbb R$ solution.

2. There exist simple examples of the type (\ref{eqSL1.1}) with
remotely almost periodic on $\mathbb R$ coefficients which are
hyperbolic on $\mathbb R_{+}$ and on $\mathbb R_{-}$, but not
hyperbolic on $\mathbb R$. To confirm this statement it is
sufficient to consider the scalar linear differential equation
$$
x'=a(t)x,
$$
where $a(t)=\arctan{t}$ ($t\in\mathbb R$). For the almost periodic
equations this type of examples are impossible because the
exponential dichotomy on $\mathbb R_{+}$ (respectively, on
$\mathbb R_{-}$) implies the exponential dichotomy on the real
axis $\mathbb R$ (for the linear almost periodic equations).
\end{remark}

\section{Funding}

This research was supported by the State Program of the Republic
of Moldova "Monotone Nonautonomous Dynamical Systems
(24.80012.5007.20SE)" and partially was supported by the
Institutional Research Program "SATGED" 011303, Moldova State
University.

\section{Conflict of Interest}

The author declares that he does not have conflict of interest.

\end{document}